\RequirePackage{fix-cm}

\documentclass[smallextended,envcountsect]{svjour3}

\smartqed  % flush right qed marks at end of proof

\usepackage{graphicx}
\usepackage{amsmath,amssymb,mathtools}
\usepackage{algorithm}
\usepackage{algorithmic}

\usepackage{silence}
\usepackage{subcaption}   % loads caption internally
\usepackage{xcolor}
\usepackage[hyphens]{url}

\usepackage{booktabs}
\usepackage{multirow}

\usepackage{ulem}
\newcommand{\best}[1]{\textbf{#1}}
\newcommand{\second}[1]{\uline{#1}}

\graphicspath{{figs/}}

\usepackage{dsfont}
\newcommand{\IRON}{\textsc{IRON}}
\newcommand{\IRONfi}{\textsc{IRON}\(_{\mathrm{FI}}\)}
\newcommand{\R}{\mathbb{R}}
\newcommand{\E}{\mathbb{E}}
\newcommand{\Var}{\mathrm{Var}}
\newcommand{\prox}{\mathrm{prox}}
\newcommand{\Id}{\mathrm{Id}}
\newcommand{\norm}[1]{\left\lVert #1\right\rVert}
\newcommand{\ip}[2]{\left\langle #1,\,#2\right\rangle}
\newcommand{\cO}{\mathcal{O}}
\newcommand{\tr}{\operatorname{tr}}

\spnewtheorem{assumption}[theorem]{Assumption}{\bfseries}{\itshape}
\title{IRON: Implicit Resolvent Optimization under Noise}
\subtitle{Implicit Steps, Smaller Noise: Stationary Variance Control in Stochastic Acceleration}

\author{Valentin Leplat \and Roland Hildebrand}

\institute{
V. Leplat \at Innopolis University
\and
R. Hildebrand \at Moscow Institute of Physics and Technology (MIPT)
}

\date{}

\begin{document}
\maketitle
\begin{abstract}
We study stochastic optimization from a joint continuous--discrete point of view. 
Starting from a second-order stochastic differential equation interpreted as a noisy accelerated gradient flow, we discretize the dynamics by a fully implicit Backward--Euler scheme. This leads to a resolvent, or proximal-type, update, computed in practice through Levenberg--Marquardt, Newton, or trust-region-type inner solves.
The resulting method, denoted by \IRONfi{}, admits a Lyapunov mean-square recursion. 
The main conclusion is that increasing the implicit stepsize \(\alpha\) improves the contraction factor and decreases the stationary mean-square error bound. 
Under sufficiently accurate inner solves, this bound scales as \(O(1/\alpha)\); in particular, for large enough \(\alpha\), the recursion is contractive and the stationary error bound vanishes as \(\alpha\to\infty\).
We establish the theory for smooth strongly convex objectives and provide a sharper quadratic analysis with an explicit stationary constant. 
The numerical experiments support the theory in the strongly convex case and illustrate the same qualitative behavior in nonconvex and learning settings. 
To the best of our knowledge, this fully implicit inertial-resolvent discretization, where the noise acts as an additive perturbation of the resolvent center and yields an \(O(1/\alpha)\) stationary-MSE law, has not been isolated in this form before.
The broader message is that discretization is not only an implementation choice: in stochastic optimization, the numerical integration rule can directly affect the long-time stability of the iterates under noise.

\keywords{stochastic optimization \and implicit methods \and resolvent/proximal map \and acceleration \and SDE discretization \and stationary variance control}
\end{abstract}

% ===================== MAIN TEXT =====================

\section{Introduction}

\paragraph{Motivation.}
Discrete-time analyses dominate stochastic optimization, but the continuous-time viewpoint often clarifies stability and robustness to noise.
We adopt a continuous--discrete perspective that links three ingredients: the underlying accelerated dynamics, the way stochastic perturbations enter the system, and the numerical discretization used to generate an algorithm.
Our main message is that the discretization is not a neutral implementation detail: in the presence of noise, it can shape the stationary spread of the iterates.

Semi-implicit schemes such as NAG-GS~\cite{leplat2023naggssemiimplicitacceleratedrobust} improve robustness compared with fully explicit discretizations, while still retaining explicit components.
Here we take the fully implicit route.
A Backward--Euler discretization of a stochastic accelerated flow yields a resolvent (see Remark \ref{resolventRemark} below), or proximal, update with a natural Levenberg--Marquardt / trust-region inner structure.
Under the noise model considered below, the stochastic perturbation becomes an additive perturbation of the resolvent center.
This leads to a mean-square recursion in which the resolvent contraction improves with the implicit stepsize \(\alpha\), and the stationary mean-square error bound scales as \(O(1/\alpha)\).
Thus, in this model, larger implicit steps reduce the stationary spread induced by noise, provided the inner solves are sufficiently accurate.

\paragraph{Mathematical setup.}
We study stochastic optimization of a smooth objective $f:\R^n\to\R$, in a setting motivated by semi-implicit acceleration (NAG-GS)~\cite{leplat2023naggssemiimplicitacceleratedrobust}.
We model the algorithmic dynamics by the following second-order stochastic differential equation (SDE) in position $x_t\in\R^n$, auxiliary velocity $v_t\in\R^n$, and damping $\gamma_t>0$:
\begin{equation}\label{eq:sde-system}
\left\{
\begin{aligned}
d x_t &= \big(v_t - x_t\big)\,dt,\\
d v_t &= \Big(\frac{\mu}{\gamma_t}\big(x_t - v_t\big) - \frac{1}{\gamma_t}\nabla f(x_t)\Big)\,dt
\;+\; \Sigma^{1/2}\, dW_t,\\
d \gamma_t &= (\mu - \gamma_t)\,dt,
\end{aligned}
\right.
\end{equation}
where \(W_t\) is a standard Brownian motion in \(\R^n\) and \(\Sigma\succeq 0\) is the diffusion covariance.
The case \(\Sigma=\rho^2 I\) corresponds to isotropic noise, with
\[
\E\|\Sigma^{1/2}\eta\|^2=\rho^2 n
\qquad
\text{for } \eta\sim\mathcal N(0,I_n).
\]
Throughout the paper we use the variance proxy
\[
\sigma^2:=\tr(\Sigma),
\]
or any upper bound on \(\E\|\Sigma^{1/2}\eta\|^2\).
The drift terms match the noiseless accelerated flow, while the diffusion term models stochastic perturbations injected in the \(v\)-equation.

\begin{remark} \label{remarkVelocity}
The setup \eqref{eq:sde-system} matches that in~\cite{leplat2023naggssemiimplicitacceleratedrobust}. In this setup $v_t$ does not correspond to the physical velocity. The latter should be rather understood as the difference 
\[
y_t := v_t- x_t.
\]
With this notation the first equation in~\eqref{eq:sde-system} becomes
\[
d x_t = y_t\,dt.
\]
Moreover, 
\[
d y_t = d v_t - d x_t =
\left(
-\left(1+\frac{\mu}{\gamma_t}\right)y_t
-\frac{1}{\gamma_t}\nabla f(x_t)
\right)dt
+
\Sigma^{1/2}dW_t.
\]
Hence $v_t$ should be understood as an auxiliary lifted variable, while $v_t - x_t$ is the physical velocity. In the noiseless drift, the fixed point is given by
\[
\gamma_t=\mu,\qquad v_t=x_t,\qquad \nabla f(x_t)=0.
\]
We keep the variable \(v_t\) because it leads to a convenient
Backward--Euler elimination and matches the lifted-state notation used in
implicit discretizations of accelerated flows.
\end{remark}

In discrete time, \IRONfi{} is obtained by applying Backward--Euler to \eqref{eq:sde-system} and then eliminating the auxiliary variables.
The resulting update has a resolvent form.
This form is central to the paper: it explains the LM/Newton or trust-region inner solve, and it reveals why the injected noise is damped by the implicit resolvent.

\begin{remark}
    \label{resolventRemark}
Throughout the paper, we use the term "resolvent" in the standard operator-theoretic sense. For a proper closed convex function $f$, the Euclidean proximal map satisfies:
\[
\prox_{\lambda f} = (\Id + \lambda \partial f)^{-1}.
\]
In the smooth case considered in most of the paper, this reduces to 
\[
\prox_{\lambda f} = (\Id + \lambda \nabla f)^{-1},
\]
that is, \(x = \prox_{\lambda f}(z)\) if and only if
\[
x + \lambda \nabla f(x) = z.
\]
Thus, in our setting, a resolvent step is equivalently a full implicit gradient step.
\end{remark}

\paragraph{Minimal assumptions.}
\begin{assumption}[Smooth strong convexity]\label{ass:sc}
There exist $0<\mu\le L<\infty$ such that for all $x,y$,
\[
\frac{\mu}{2}\norm{x-y}^2 \le f(x)-f(y)-\ip{\nabla f(y)}{x-y} \le \frac{L}{2}\norm{x-y}^2.
\]
Let $x^\star=\arg\min f$ and $\kappa=L/\mu$.
\end{assumption}

\begin{assumption}[Diffusion noise model]\label{ass:noise}
The stochastic perturbation in \eqref{eq:sde-system} has covariance \(\Sigma\succeq0\).
Over a step of length \(\alpha_k\),
\[
W_{t_{k+1}}-W_{t_k}
=
\sqrt{\alpha_k}\,\eta_k,
\qquad
\eta_k\sim\mathcal N(0,I_n),
\]
so the random vector injected in the \(v\)-update has covariance \(\alpha_k\Sigma\).
We write
\[
\sigma^2:=\tr(\Sigma),
\]
or more generally use \(\sigma^2\) as any upper bound on
\(\E\|\Sigma^{1/2}\eta_k\|^2\).
\end{assumption}

\color{black}
\subsection{Related work.}

\emph{Acceleration and semi-implicit schemes.}
Semi-implicit acceleration (NAG-GS) is derived from an accelerated Nesterov-like SDE together with a semi-implicit Gauss--Seidel discretization, leading to improved robustness and broader stability behavior than fully explicit discretizations~\cite{leplat2023naggssemiimplicitacceleratedrobust}.
In the quadratic setting, the analysis and experiments in~\cite{leplat2023naggssemiimplicitacceleratedrobust} study spectral-radius stability and stationary covariance under stochastic perturbations.
By contrast, our route is fully implicit: each outer step is a resolvent step. Under strong convexity, this resolvent is contractive, and its contraction improves with growing implicit parameter.

\emph{Implicit and resolvent discretizations.}
The fully implicit step is the Euclidean resolvent, or proximal map, of a strongly convex function, a classical object in monotone operator theory~\cite[Ch.~23]{bauschke2017convex}.
In the deterministic setting, the discrete--continuous dictionary of~\cite{wilson2021lyapunov} studies implicit-Euler discretizations of accelerated flows and writes the resulting update in proximal/argmin form, together with a discrete Lyapunov analysis.
See also the ODE-solver viewpoint of~\cite{luo2022from}, which connects acceleration mechanisms to A-stable integrators.
From numerical analysis, Backward Euler is A-stable~\cite{dahlquist1963special,hairer1996sode2}, which is consistent with robustness at large steps.

\emph{Levenberg--Marquardt and trust-region interpretation.}
Solving the fully implicit fixed point by Newton yields linear systems of the form
\[
(I+\lambda\nabla^2 f)\Delta = r,
\]
which have the same algebraic structure as Levenberg--Marquardt (LM) steps and admit a trust-region (TR) interpretation~\cite{levenberg1944method,marquardt1963algorithm,conn2000trust}.
This observation is algorithmically useful: it turns the implicit resolvent step into a standard inner nonlinear solve.

\emph{Stochastic forward--backward and proximal stochastic-gradient methods.}
A nearby but distinct literature is stochastic proximal-gradient or stochastic forward--backward splitting for composite objectives
\[
F(x)=g(x)+r(x),
\]
where one typically performs an explicit stochastic-gradient step on the smooth part \(g\), followed by a proximal step on the regularizer \(r\). Classical examples include FOBOS~\cite{duchi2009forwardbackward}, stochastic forward--backward splitting~\cite{combettes2015sfb}, and variance-reduced proximal stochastic-gradient methods such as Prox-SVRG~\cite{xiao2014proxsvrg}.
In this family, a typical update has the form
\[
x_{k+1}
=
\prox_{\eta_k r}\bigl(x_k-\eta_k \widehat{\nabla} g(x_k)\bigr).
\]
\IRONfi{} is not of this type: it does not apply a regularizer-only proximal step after an explicit stochastic-gradient step. Instead, it evaluates a resolvent of the full objective at a noise-perturbed center.

\emph{Implicit SGD, proximal Robbins--Monro, and stochastic proximal point methods.}
Closer in spirit are implicit stochastic-gradient and stochastic proximal-point methods.
Implicit SGD evaluates the stochastic gradient implicitly at the new iterate and was developed partly to improve stability with respect to stepsize choices~\cite{toulis2017implicit}.
The proximal Robbins--Monro framework interprets stochastic approximation through proximal updates and highlights stability advantages of such implicit steps~\cite{toulis2021proxrm}.
Modern stochastic proximal point methods study updates of the form
\[
x_{k+1}=\prox_{\gamma f_{\xi_k}}(x_k),
\]
as well as variants with minibatching, momentum, inexact inner solves, and variance reduction~\cite{patrascu2018spp,kim2022sppam,milzarek2024spp,traore2024variance,richtarik2024unified}.
Model-based and approximate proximal methods form another related branch~\cite{davis2019modelbased,asi2020aprox}.
These works are important neighbors, but the mechanism studied here is different: \IRONfi{} computes
\[
x_{k+1}
=
\prox_{\lambda_k f}(c_k+\xi_k),
\qquad
c_k = x_k + \frac{v_k - x_k}{1+\tau_k},
\]
with respect to the deterministic full objective \(f\), at an inertial center \(c_k\neq x_k\), and with stochasticity entering as an additive perturbation of the center. {\color{black} Note that the offset $c_k - x_k$ of the center is proportional to the physical velocity $v_k - x_k$ (see Remark \ref{remarkVelocity}). This form makes the inertial structure of the step explicit.}

\emph{Positioning.}
Our goal is not to propose a stochastic optimizer that outperforms state-of-the-art methods such as AdamW, SGD variants, proximal stochastic-gradient methods, or stochastic proximal point methods on standard learning benchmarks.
Rather, we address a specific stability question: how does a fully implicit discretization of an accelerated stochastic flow affect the stationary spread induced by noise?
The distinctive feature of \IRONfi{} is that the stochastic perturbation in the velocity equation becomes an additive perturbation of the resolvent center.
This center-perturbation viewpoint allows us to prove an \(O(1/\alpha)\) stationary mean-square error law under strong convexity, and an explicit stationary constant in the quadratic case.
To the best of our knowledge, this particular fully implicit inertial-resolvent discretization of an accelerated stochastic flow, together with its center-noise mechanism and \(O(1/\alpha)\) stationary-MSE analysis, has not been isolated before.
\color{black}

\color{black}
\subsection{Contributions.}
This paper makes the following contributions.

First, we derive \IRONfi{} from a fully implicit Backward--Euler discretization of the stochastic accelerated flow \eqref{eq:sde-system}. After eliminating the auxiliary variables, the update takes the resolvent form
\[
x_{k+1}=\prox_{\lambda_k f}(c_k+\xi_k),
\]
where the stochastic perturbation injected in the \(v\)-equation appears as an additive perturbation of the resolvent center. This reformulation is the key link between the SDE model, the implicit discretization, and the stationary behavior of the method.

Second, under smooth strong convexity and the diffusion noise model of Assumption~\ref{ass:noise}, we prove a Lyapunov mean-square recursion showing that larger implicit steps strengthen the resolvent contraction and reduce the stationary mean-square error. For constant \(\alpha\) and fixed \(\gamma\), this yields an \(O(\sigma^2/\alpha)\) steady-state bound once \(\alpha\) is sufficiently large; see Theorem~\ref{thm:main}.

Third, we sharpen the analysis in the quadratic case. There, the dynamics decouple in the eigenbasis, leading to an explicit asymptotic stationary constant,
\[
\alpha\,\E\|x_\infty-x^\star\|^2
\;\longrightarrow\;
\frac{\gamma^2}{n}\sigma^2\tr(A^{-2}),
\]
in the isotropic-noise setting. We also give an exact stationary-covariance computation through a discrete Lyapunov equation.

Fourth, we extend the analysis to varying stepsizes, the damping update \(\gamma_k\), and inexact inner solves. In particular, the residual of the inner resolvent equation gives an implementable stopping rule that directly controls the distance to the exact resolvent point and preserves the \(O(1/\alpha)\) stationary scaling.

Finally, we support the theory with numerical experiments designed to isolate the predicted stationary-spread behavior. The quadratic tests compare Monte Carlo simulations with the exact Lyapunov covariance formula. The synthetic strongly convex logistic-regression benchmark supports the predicted \(O(1/\alpha)\) stationary-MSE scaling for a nonlinear objective, with a fitted log--log slope close to \(-1\).
The MNIST softmax-regression experiment illustrates the accuracy, stability, and inner-solve cost of \IRONfi{} on a standard learning task.
\color{black}

\subsection{Notation.}
We use $\ip{\cdot}{\cdot}$ and $\norm{\cdot}$ for the Euclidean inner product and norm.
Constants $c,C,C_i,K_i$ may depend on $(\mu,L,\gamma_0)$ but never on the stepsizes $\{\alpha_k\}$ when we say "independent of $\alpha$".

\subsection{Organization of the paper.}
Section~\ref{sec:scheme} derives the fully implicit scheme \IRONfi{} and its resolvent formulation.
Section~\ref{sec:constant} develops the mean-square analysis, including the constant stepsize theory, the quadratic case, varying stepsizes, inexact inner solves, and mini-batching.
Section~\ref{sec:algorithms} presents pseudocode for the outer resolvent step and the LM/Newton inner solver.
%Section~\ref{sec:experiments} reports numerical experiments that illustrate the contraction and the $O(1/\alpha)$ stationary variance decay, and compares \IRONfi{} to common optimizers. 
Section~\ref{sec:experiments} reports numerical experiments that test the predicted \(O(1/\alpha)\) stationary mean-square scaling, visualize the stationary spread, and assess the practical cost of the inexact resolvent solve.
Finally, Section~\ref{sec:conclusion} summarizes our findings and outlines directions for future work.

%\newpage
% ===================== Scheme =====================
\section{Model - From flow to fully-implicit step (\IRONfi)}\label{sec:scheme}
We now apply Implicit Euler to the SDE system \eqref{eq:sde-system} over a time step of length $\alpha_k>0$.

\subsection*{Step 1: Implicit–Euler discretization of the system}
Using $W_{k+1}-W_k=\sqrt{\alpha_k}\,\eta_k$, $\eta_k\sim\mathcal{N}(0,I_n)$:
\begin{align}
\frac{x_{k+1}-x_k}{\alpha_k} &= v_{k+1} - x_{k+1}, \label{eq:IE-x}\\
\frac{v_{k+1}-v_k}{\alpha_k} &=
   \frac{\mu}{\gamma_k}\big(x_{k+1}-v_{k+1}\big)
   - \frac{1}{\gamma_k}\nabla f(x_{k+1})
   \label{eq:IE-v}\\
&\qquad + \Sigma^{1/2}\,\frac{W_{k+1}-W_k}{\alpha_k} \notag\\
\frac{\gamma_{k+1}-\gamma_k}{\alpha_k} &= \mu - \gamma_{k+1}. \label{eq:IE-gamma}
\end{align}

From \eqref{eq:IE-x}, $v_{k+1}=x_{k+1}+\frac{x_{k+1}-x_k}{\alpha_k}$, and \eqref{eq:IE-gamma} solves explicitly.
For later reference we collect the two state updates:
\begin{equation}\label{eq:v-gamma}
v_{k+1}=\frac{x_{k+1}-x_k}{\alpha_k}+x_{k+1},
\quad
\gamma_{k+1}=\frac{\gamma_k+\alpha_k\mu}{1+\alpha_k}.
\end{equation}

\subsection*{Step 2: Eliminate $v_{k+1}$ and collect terms}
Substitute $v_{k+1}$ into \eqref{eq:IE-v} and multiply by $\alpha_k$:
\begin{equation*}
    \begin{aligned}
        v_{k+1}-v_k \;& =\; -\frac{\mu}{\gamma_k}\big(x_{k+1}-x_k\big)\;-\;\frac{\alpha_k}{\gamma_k}\nabla f(x_{k+1}) \\
        & \;+\;\Sigma^{1/2}\sqrt{\alpha_k}\,\eta_k.
    \end{aligned}
\end{equation*}
But also $v_{k+1}-v_k = \big(x_{k+1}-v_k\big)+\frac{x_{k+1}-x_k}{\alpha_k}$. Rearranging gives
\begin{equation*}
    \begin{aligned}
        & \Big(1+\frac{1}{\alpha_k}+\frac{\mu}{\gamma_k}\Big)\,x_{k+1}\;-\;v_k\;-\;\Big(\frac{1}{\alpha_k}+\frac{\mu}{\gamma_k}\Big)x_k\; \\& +\;\frac{\alpha_k}{\gamma_k}\nabla f(x_{k+1})
\;=\; \Sigma^{1/2}\sqrt{\alpha_k}\,\eta_k.
    \end{aligned}
\end{equation*}

Introduce
\begin{equation}\label{eq:def-params}
\tau_k:=\frac{1}{\alpha_k}+\frac{\mu}{\gamma_k},\quad
c_k:=\frac{v_k+\tau_k x_k}{1+\tau_k},\quad
\lambda_k:=\frac{\alpha_k}{\gamma_k(1+\tau_k)}.
\end{equation}
\iffalse
\color{blue}
\begin{remark}
    (to validate with Roland - rewrite the center in a clearer form)
Equivalently, the center can be written as 
\[
c_k = \frac{v_k + \tau_kx_k}{1+\tau_k} = x_k + \frac{v_k - x_k}{1 + \tau_k}
\]
Thus, $c_k$ is not a convex combination of the position and a physical velocity. Rather, it is the current position shifted in the actual velocity direction $v_k - x_k$, with a damping factor \((1+\tau_k)^{-1}\).
This form makes the inertial structure of the step explicit.

\end{remark}

\color{black}
\fi

Then the previous identity becomes the optimality condition
\begin{equation}\label{eq:opt-cond}
\frac{\alpha_k}{\gamma_k}\nabla f(x_{k+1}) + (1+\tau_k)\Big(x_{k+1}-c_k\Big)
\;=\; \Sigma^{1/2}\sqrt{\alpha_k}\,\eta_k.
\end{equation}

\subsection*{Step 3: Proximal (resolvent) reformulation and the noise term}
Move the right-hand side of \eqref{eq:opt-cond} to the left and divide by $(1+\tau_k)$:
\[
\frac{\alpha_k}{\gamma_k}\nabla f(x_{k+1}) + (1+\tau_k)\Big(x_{k+1}-c_k-\underbrace{\frac{\Sigma^{1/2}\sqrt{\alpha_k}}{1+\tau_k}\eta_k}_{:=\ \xi_k}\Big)=0.
\]
Equivalently,
\begin{equation}\label{eq:prox-noisy-opt}
0 \in \frac{\alpha_k}{\gamma_k}\,\partial f(x_{k+1})+(1+\tau_k)\Big(x_{k+1}-(c_k+\xi_k)\Big),
\end{equation}
whose unique solution (by strong convexity) is the proximal step
\begin{equation}\label{eq:proxsub}
\begin{aligned}
  x_{k+1} & =\arg\min_{x}\Big\{\frac{\alpha_k}{\gamma_k}\,f(x)+\frac{1+\tau_k}{2}\norm{x-(c_k+\xi_k)}^2\Big\} \\
& \;=\; \prox_{\lambda_k f}\!\big(c_k+\xi_k\big),  
\end{aligned}
\end{equation}
with $\lambda_k$ as in \eqref{eq:def-params}. This displays the trust-region center $c_k$ and the resolvent (LM) parameter $\lambda_k$.

%\paragraph{What exactly is $\xi_k$ ? }
\paragraph{Definition and scaling of the center perturbation.}
By construction,
\begin{equation*}
    \begin{aligned}
        & \xi_k \;=\; \frac{\Sigma^{1/2}\sqrt{\alpha_k}}{1+\tau_k}\,\eta_k,\quad 
\eta_k\sim\mathcal{N}(0,I_n), \\
& \E[\eta_k]=0,\ \E[\eta_k\eta_k^\top]=I_n.
    \end{aligned}
\end{equation*}

Therefore $\E[\xi_k]=0$ and
\begin{equation}\label{eq:xi-var}
\begin{aligned}
    \E\norm{\xi_k}^2 \; =\; \frac{\alpha_k}{(1+\tau_k)^2}\,\E\!\left[\eta_k^\top \Sigma \eta_k\right]
\;& =\; \frac{\alpha_k}{(1+\tau_k)^2}\,\mathrm{tr}(\Sigma) \\
\;& \le\; \frac{\alpha_k}{(1+\tau_k)^2}\,\sigma^2,
\end{aligned}
\end{equation}
where we set the variance proxy $\sigma^2:=\mathrm{tr}(\Sigma)$ (or any upper bound on $\E\|\Sigma^{1/2}\eta_k\|^2$). In the isotropic case $\Sigma=\rho^2 I_n$, this equals $\sigma^2=\rho^2 n$.

\paragraph{Summary of the mapping.}
Implicit Euler on \eqref{eq:sde-system} yields \eqref{eq:opt-cond}, the first-order optimality of \eqref{eq:proxsub}. The noise injected in the $v$-equation becomes an additive perturbation of the prox center, $c_k\mapsto c_k+\xi_k$, with second moment bounded as in \eqref{eq:xi-var}. The inner nonlinear-solve structure comes from the Jacobian of the fixed-point map $u\mapsto c_k+\xi_k-\lambda_k\nabla f(u)$, namely $I+\lambda_k \nabla^2 f(u)$.

The perturbation $\xi_k$ is not introduced as an artificial post-processing noise. It is rather the center perturbation obtained after applying the Backward--Euler to the SDE model~\eqref{eq:sde-system} and eliminating the auxiliary variables. Therefore, the theory below analyzes a diffusion-induced "center-noise" model:
\[
x_{k+1} = \prox_{\lambda_k f}(c_k + \xi_k)
\]
This is different from a standard stochastic gradient or mini-batch oracle model, where the randomness enters through an approximation of $\nabla f$ or through a stochastic objective inside the inner solve. Such stochastic-oracle variants are natural in large-scale learning applications, but the main mean-square theory in this paper is proved for the diffusion/center-noise model.

\paragraph{Wilson-Recht-Jordan~\cite{wilson2021lyapunov} Implicit-Euler connection.}
For $\xi_k = 0$ (deterministic setting), Equation~\eqref{eq:proxsub} is exactly the Euclidean prox/resolvent in \cite{wilson2021lyapunov} (Implicit-Euler), with the auxiliary state coupled via \eqref{eq:IE-x} and \eqref{eq:IE-gamma}. See also \cite{luo2022from,hairer1996sode2}.

\paragraph{LM/TR interpretation.}
Optimality of \eqref{eq:proxsub} yields
$0=\frac{\alpha_k}{\gamma_k}\nabla f(x_{k+1})+(1+\tau_k)(x_{k+1}-c_k)$, i.e.,
$x_{k+1}=c_k-\lambda_k\nabla f(x_{k+1})$. Newton on $g(u):=u-(c_k-\lambda_k\nabla f(u))$ uses
$Jg(u)=I+\lambda_k\nabla^2 f(u)$: a Levenberg-Marquardt system. The subproblem is a trust-region model around~$c_k$ \cite{levenberg1944method,marquardt1963algorithm,conn2000trust}.

% ===================== Constant stepsize =====================
\section{Theory}\label{sec:constant}

\subsection{Operator-theoretic background}

We begin with a standard resolvent (proximal) fact: strong convexity makes the proximal map a strict contraction, which will be the main stability mechanism in our stochastic analysis.

\begin{lemma}[Resolvent contraction]\label{lem:resolvent}
If $f$ is $\mu$-strongly convex, then $\prox_{\lambda f}=(\Id+\lambda\nabla f)^{-1}$ is $(1+\lambda\mu)^{-1}$-Lipschitz:
$\norm{\prox_{\lambda f}(u)-\prox_{\lambda f}(v)}\le (1+\lambda\mu)^{-1}\norm{u-v}$.
\end{lemma}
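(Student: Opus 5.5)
The plan is to use the optimality characterization of the proximal map from Remark~\ref{resolventRemark} together with the strong monotonicity of $\nabla f$. Fix $u,v\in\R^n$ and set $p:=\prox_{\lambda f}(u)$ and $q:=\prox_{\lambda f}(v)$. Both points are well defined and unique, since each minimizes the strongly convex function $f+\frac{1}{2\lambda}\norm{\cdot-u}^2$ (respectively with center $v$). By Remark~\ref{resolventRemark} they satisfy
\[
p+\lambda\nabla f(p)=u,\qquad q+\lambda\nabla f(q)=v.
\]
Subtracting the two identities gives $u-v=(p-q)+\lambda\bigl(\nabla f(p)-\nabla f(q)\bigr)$.

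The key intermediate fact is strong monotonicity of the gradient:
\[
\ip{\nabla f(p)-\nabla f(q)}{p-q}\ge \mu\norm{p-q}^2 .
\]
I would derive it from Assumption~\ref{ass:sc}, using only the lower inequality. Write that inequality once with $(x,y)=(p,q)$ and once with $(x,y)=(q,p)$, then add the two. The function values cancel, and what remains is exactly the displayed bound. No smoothness constant $L$ is needed.

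Next, take the inner product of the subtracted identity with $p-q$:
\[
\ip{u-v}{p-q}=\norm{p-q}^2+\lambda\ip{\nabla f(p)-\nabla f(q)}{p-q}\ge(1+\lambda\mu)\norm{p-q}^2 .
\]
Cauchy--Schwarz bounds the left-hand side by $\norm{u-v}\,\norm{p-q}$. If $p=q$ the claim is trivial. Otherwise, dividing by $\norm{p-q}$ yields $\norm{p-q}\le(1+\lambda\mu)^{-1}\norm{u-v}$, which is the statement.

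There is no real obstacle in this argument. The only points needing care are the existence and uniqueness of the resolvent point, which follow from strong convexity of the prox objective, and the justification of strong monotonicity. If one prefers the nonsmooth form $(\Id+\lambda\partial f)^{-1}$, the same computation goes through verbatim with subgradients $g_p\in\partial f(p)$ and $g_q\in\partial f(q)$, using strong monotonicity of $\partial f$.
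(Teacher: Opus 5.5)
Your proposal is correct and follows essentially the same route as the paper's proof: the optimality conditions $u=p+\lambda\nabla f(p)$ and $v=q+\lambda\nabla f(q)$, the inner product of their difference with $p-q$, the strong monotonicity bound $(1+\lambda\mu)\norm{p-q}^2$, and Cauchy--Schwarz. Your additional justifications are sound details the paper leaves implicit: existence and uniqueness of the prox point, and deriving strong monotonicity by adding the two lower inequalities of Assumption~\ref{ass:sc}.
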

\noindent\textbf{Proof.} See Appendix~\ref{app:proof-1}.

\subsection{Constant-stepsize mean-square analysis}

In this subsection we first isolate the constant-stepsize, frozen-damping regime:
\[
\alpha_k\equiv \alpha\ge 1,
\qquad
\gamma_k\equiv \gamma>0.
\]
This setting is useful because it exposes the main resolvent mechanism without additional bookkeeping. It corresponds either to a frozen-\(\gamma\) variant of the scheme, or to the damping update \eqref{eq:v-gamma} initialized at its fixed point when \(\gamma_0=\mu\), in which case \(\gamma_k\equiv\mu\). The general case with updated \(\gamma_k\) and varying stepsizes is treated separately in Section~\ref{sec:varying}. Define
\[
\tau=\frac{1}{\alpha}+\frac{\mu}{\gamma},
\qquad
\lambda=\frac{\alpha}{\gamma(1+\tau)},
\qquad
c_k=\frac{v_k+\tau x_k}{1+\tau},
\qquad
\hat c_k=c_k+\xi_k,
\]
and introduce the energy
\[
\mathcal{E}_k := f(x_k)-f(x^\star)+\frac{\gamma}{2}\|v_k-x^\star\|^2.
\]

We first collect a few elementary inequalities that connect the energy $\mathcal{E}_k$, the auxiliary state $v_k$, and the prox center $c_k$. These bounds isolate the only places where the stepsize $\alpha$ enters the argument and will be combined in Proposition~\ref{prop:onestep} to prove Theorem~\ref{thm:main}.

\begin{lemma}[Basic energy bounds]\label{lem:energy-eq}
There exist $m_1,m_2>0$ (depending only on $\mu,L,\gamma$) such that
$m_1\norm{x_k-x^\star}^2 \le \mathcal{E}_k \le m_2(\norm{x_k-x^\star}^2+\norm{v_k-x^\star}^2)$.
\end{lemma}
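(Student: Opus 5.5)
The argument is a direct application of Assumption~\ref{ass:sc} at the reference point \(y=x^\star\), where \(\nabla f(x^\star)=0\). With this choice the two-sided inequality becomes
\[
\frac{\mu}{2}\norm{x_k-x^\star}^2\;\le\; f(x_k)-f(x^\star)\;\le\;\frac{L}{2}\norm{x_k-x^\star}^2 .
\]
These bounds control the function-value part of \(\mathcal{E}_k\). The quadratic term \(\frac{\gamma}{2}\norm{v_k-x^\star}^2\) is nonnegative because \(\gamma>0\), so it can be handled separately.

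\emph{Lower bound.} We drop the nonnegative \(v\)-term and keep only the function gap. This gives \(\mathcal{E}_k\ge f(x_k)-f(x^\star)\ge \frac{\mu}{2}\norm{x_k-x^\star}^2\), so we may take \(m_1=\mu/2\).

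\emph{Upper bound.} We add the two pieces:
\[
\mathcal{E}_k\le \frac{L}{2}\norm{x_k-x^\star}^2+\frac{\gamma}{2}\norm{v_k-x^\star}^2\le \frac{\max\{L,\gamma\}}{2}\bigl(\norm{x_k-x^\star}^2+\norm{v_k-x^\star}^2\bigr).
\]
So we may take \(m_2=\max\{L,\gamma\}/2\).

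Both constants depend only on \(\mu\), \(L\) and \(\gamma\). In particular they do not involve \(\alpha\), as the Notation convention requires.

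No real obstacle arises. The only point that needs care is the first-order condition \(\nabla f(x^\star)=0\), which removes the linear term. It holds because \(f\) is differentiable and \(x^\star\) is its unique minimizer under Assumption~\ref{ass:sc}. One could note that the bound is used pathwise, so it also holds in expectation, which is the form Proposition~\ref{prop:onestep} will need.
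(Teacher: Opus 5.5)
Your proposal is correct and matches the paper's proof. Both apply Assumption~\ref{ass:sc} at $y=x^\star$ with $\nabla f(x^\star)=0$, drop the nonnegative $\frac{\gamma}{2}\norm{v_k-x^\star}^2$ term to get $m_1=\mu/2$, and bound the two pieces by $m_2=\max\{L,\gamma\}/2$.
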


\begin{proof}
Recall $\mathcal{E}_k := f(x_k)-f(x^\star)+\frac{\gamma}{2}\|v_k-x^\star\|^2$.

\noindent\emph{Lower bound.}
By $\mu$-strong convexity and $\nabla f(x^\star)=0$,
\[
f(x_k)-f(x^\star)\;\ge\;\frac{\mu}{2}\|x_k-x^\star\|^2.
\]
Since $\frac{\gamma}{2}\|v_k-x^\star\|^2\ge 0$, it follows that
\[
\mathcal{E}_k \;\ge\; \frac{\mu}{2}\|x_k-x^\star\|^2,
\]
so we may take $m_1:=\mu/2$.

\noindent\emph{Upper bound.}
By $L$-smoothness and $\nabla f(x^\star)=0$,
\[
f(x_k)-f(x^\star)\;\le\;\frac{L}{2}\|x_k-x^\star\|^2.
\]
Therefore,
\[
\mathcal{E}_k
\le \frac{L}{2}\|x_k-x^\star\|^2+\frac{\gamma}{2}\|v_k-x^\star\|^2
\le m_2\big(\|x_k-x^\star\|^2+\|v_k-x^\star\|^2\big),
\]
with $m_2:=\max\{L/2,\gamma/2\}$.
\end{proof}

\begin{lemma}[State-error domination by the energy]\label{lem:energy-dom}
By strong convexity, $f(x_k)-f^\star\ge (\mu/2)\|x_k-x^\star\|^2$, hence
\[
\|x_k-x^\star\|^2+\|v_k-x^\star\|^2 \;\le\; 2 c_E\,\mathcal E_k,
\qquad c_E:=\max\{2/\mu,\,2/\gamma\}.
\]
\end{lemma}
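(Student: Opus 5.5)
The plan is to bound each of the two squared distances separately by a multiple of $\mathcal E_k$, add the two bounds, and then enlarge the resulting coefficient to the common constant $2c_E$. Both terms of $\mathcal E_k$ are nonnegative, so each one can be isolated by discarding the other.

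\emph{Position term.} As in the lower bound of Lemma~\ref{lem:energy-eq}, Assumption~\ref{ass:sc} with $y=x^\star$ and $\nabla f(x^\star)=0$ gives $f(x_k)-f^\star\ge(\mu/2)\|x_k-x^\star\|^2$. Since $\frac{\gamma}{2}\|v_k-x^\star\|^2\ge0$, this yields
\[
\|x_k-x^\star\|^2\le \frac{2}{\mu}\bigl(f(x_k)-f^\star\bigr)\le \frac{2}{\mu}\,\mathcal E_k .
\]

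\emph{Auxiliary term.} Since $f(x_k)-f^\star\ge0$ (by optimality of $x^\star$), we have $\frac{\gamma}{2}\|v_k-x^\star\|^2\le\mathcal E_k$. Hence
\[
\|v_k-x^\star\|^2\le \frac{2}{\gamma}\,\mathcal E_k .
\]

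\emph{Combining.} Adding the two bounds and using $\frac{2}{\mu},\frac{2}{\gamma}\le c_E$ gives
\[
\|x_k-x^\star\|^2+\|v_k-x^\star\|^2\le\Bigl(\frac{2}{\mu}+\frac{2}{\gamma}\Bigr)\mathcal E_k\le 2c_E\,\mathcal E_k ,
\]
which is the claim. A slightly sharper route bounds the sum by $\max\{2/\mu,2/\gamma\}\,\bigl(f(x_k)-f^\star+\frac{\gamma}{2}\|v_k-x^\star\|^2\bigr)=c_E\,\mathcal E_k$, so the factor $2$ is slack.

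There is no real obstacle. The only point needing care is to use the two nonnegativity facts, $f(x_k)\ge f^\star$ and $\|v_k-x^\star\|^2\ge0$, each at the right step, so that each term is isolated correctly.
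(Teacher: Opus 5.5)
Your proof is correct and follows the paper's argument: you bound $\|x_k-x^\star\|^2\le(2/\mu)\mathcal E_k$ and $\|v_k-x^\star\|^2\le(2/\gamma)\mathcal E_k$, each time using the nonnegativity of the other term of $\mathcal E_k$, and then add the two bounds. Your side remark is also correct: the sum is already bounded by $c_E\,\mathcal E_k$, so the factor $2$ in the stated constant is slack.
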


\begin{proof}
By $\mu$-strong convexity,
\[
f(x_k)-f(x^\star)\;\ge\;\frac{\mu}{2}\|x_k-x^\star\|^2
\quad\Longrightarrow\quad
\|x_k-x^\star\|^2 \;\le\; \frac{2}{\mu}\big(f(x_k)-f(x^\star)\big)
\;\le\; \frac{2}{\mu}\,\mathcal{E}_k.
\]
Moreover, since $\frac{\gamma}{2}\|v_k-x^\star\|^2 \le \mathcal{E}_k$, we have
\[
\|v_k-x^\star\|^2 \;\le\; \frac{2}{\gamma}\,\mathcal{E}_k.
\]
Adding the two inequalities and using $c_E:=\max\{2/\mu,2/\gamma\}$ gives
\[
\|x_k-x^\star\|^2+\|v_k-x^\star\|^2
\le c_E\,\mathcal{E}_k + c_E\,\mathcal{E}_k
\le 2c_E\,\mathcal{E}_k.
\]

\end{proof}

\begin{lemma}[Center coupling]\label{lem:center}
For $c_k=\frac{v_k+\tau x_k}{1+\tau}$,
$\norm{c_k-x^\star}^2 \le \frac{2}{1+\tau}\norm{v_k-x^\star}^2 + \frac{2\tau}{1+\tau}\norm{x_k-x^\star}^2$.
\end{lemma}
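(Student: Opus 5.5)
The plan is to write the error of the center as a convex combination and apply convexity of the squared norm, rather than a plain triangle inequality. Since
\[
c_k-x^\star=\frac{1}{1+\tau}(v_k-x^\star)+\frac{\tau}{1+\tau}(x_k-x^\star),
\]
with weights $\theta:=\frac{1}{1+\tau}\in(0,1)$ and $1-\theta=\frac{\tau}{1+\tau}$, Jensen's inequality for $\norm{\cdot}^2$ gives directly
\[
\norm{c_k-x^\star}^2\le \frac{1}{1+\tau}\norm{v_k-x^\star}^2+\frac{\tau}{1+\tau}\norm{x_k-x^\star}^2.
\]
This is already sharper than the claimed bound by a factor of $2$ on each term, so the lemma follows immediately.

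Alternatively, one can use $\norm{a+b}^2\le 2\norm{a}^2+2\norm{b}^2$ with $a=\theta(v_k-x^\star)$ and $b=(1-\theta)(x_k-x^\star)$. This yields coefficients $2\theta^2$ and $2(1-\theta)^2$, which are at most $2\theta$ and $2(1-\theta)$ because $\theta\in[0,1]$. That reproduces the stated constants exactly, which is probably how the authors intend it.

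I would present the Jensen step and note that the factor $2$ is slack. The only thing to check is that $\tau>0$, which holds since $\tau=1/\alpha+\mu/\gamma$ with $\alpha,\gamma,\mu>0$; this guarantees the weights are nonnegative and sum to one.
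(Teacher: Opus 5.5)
Your proof is correct. Your primary argument differs slightly from the paper's, which proves the lemma exactly as in your ``alternative'': it applies $\|a+b\|^2\le 2\|a\|^2+2\|b\|^2$ to the two weighted pieces and then uses $\frac{1}{(1+\tau)^2}\le\frac{1}{1+\tau}$ and $\frac{\tau^2}{(1+\tau)^2}\le\frac{\tau}{1+\tau}$.

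Your Jensen step instead uses the fact that the weights $\frac{1}{1+\tau}$ and $\frac{\tau}{1+\tau}$ are nonnegative and sum to one. In fact the exact identity
\[
\|c_k-x^\star\|^2=\frac{1}{1+\tau}\|v_k-x^\star\|^2+\frac{\tau}{1+\tau}\|x_k-x^\star\|^2-\frac{\tau}{(1+\tau)^2}\|v_k-x_k\|^2
\]
holds, which shows the factor $2$ is pure slack.

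The sharper constant only shrinks $\alpha$-independent constants downstream: the $K'$ in the proof of Proposition~\ref{prop:onestep}, and hence $A$, $B$, $G$ in Theorem~\ref{thm:main}. The $O(1/\alpha)$ scalings are unaffected, since all the $\alpha$-dependence comes from Lemma~\ref{lem:scale}.

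The paper's cruder route, for its part, needs only that each weight lies in $[0,1]$, not that they sum to one. That robustness is not needed here, so either argument is complete.
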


\begin{proof}
By definition,
\[
c_k-x^\star=\frac{1}{1+\tau}(v_k-x^\star)+\frac{\tau}{1+\tau}(x_k-x^\star).
\]
Using $\|a+b\|^2\le 2\|a\|^2+2\|b\|^2$ with
$a=\frac{1}{1+\tau}(v_k-x^\star)$ and $b=\frac{\tau}{1+\tau}(x_k-x^\star)$ gives
\[
\|c_k-x^\star\|^2
\le \frac{2}{(1+\tau)^2}\|v_k-x^\star\|^2+\frac{2\tau^2}{(1+\tau)^2}\|x_k-x^\star\|^2.
\]
Finally, since $\frac{1}{(1+\tau)^2}\le \frac{1}{1+\tau}$ and
$\frac{\tau^2}{(1+\tau)^2}\le \frac{\tau}{1+\tau}$ for all $\tau\ge 0$, we obtain
\[
\|c_k-x^\star\|^2 \le \frac{2}{1+\tau}\|v_k-x^\star\|^2+\frac{2\tau}{1+\tau}\|x_k-x^\star\|^2.
\]
\end{proof}

\begin{lemma}[Velocity-position coupling]\label{lem:vx}
For $\alpha\ge 1$, the update \eqref{eq:v-gamma} yields
\[
\|v_{k}-x^\star\|^2 \;\le\; 6\,\|x_{k}-x^\star\|^2 \;+\; 4\,\|x_{k-1}-x^\star\|^2.
\]
Consequently, with $m_2$ from Lemma~\ref{lem:energy-eq}, there exists $\bar m_2>0$ (depending only on $\mu,L,\gamma$) such that
\[
\E\mathcal{E}_k \;\le\; \bar m_2\Big(\E\|x_k-x^\star\|^2+\E\|x_{k-1}-x^\star\|^2\Big).
\]
\end{lemma}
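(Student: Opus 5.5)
We start from the $v$-update in \eqref{eq:v-gamma} with constant stepsize, taken at index $k-1$:
\[
v_k = x_k + \frac{x_k - x_{k-1}}{\alpha}.
\]
Subtracting $x^\star$ and rewriting $x_k - x_{k-1} = (x_k - x^\star) - (x_{k-1} - x^\star)$ gives the exact identity
\[
v_k - x^\star = \Big(1+\frac{1}{\alpha}\Big)(x_k - x^\star) - \frac{1}{\alpha}(x_{k-1}-x^\star).
\]
Since $\alpha\ge 1$, the coefficients satisfy $1+1/\alpha\le 2$ and $1/\alpha\le 1$. Applying $\|a+b\|^2\le 2\|a\|^2+2\|b\|^2$ then yields
\[
\|v_k-x^\star\|^2\le 8\|x_k-x^\star\|^2+2\|x_{k-1}-x^\star\|^2.
\]

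The constants here are not the stated $6$ and $4$. To get those, I would use the weighted inequality $\|a+b\|^2\le (1+\epsilon)\|a\|^2+(1+\epsilon^{-1})\|b\|^2$ with $\epsilon=1/2$, which gives
\[
\|v_k-x^\star\|^2\le \tfrac32\cdot 4\,\|x_k-x^\star\|^2 + 3\cdot 1\,\|x_{k-1}-x^\star\|^2 = 6\|x_k-x^\star\|^2+3\|x_{k-1}-x^\star\|^2,
\]
and this is dominated by the claimed $6\|x_k-x^\star\|^2+4\|x_{k-1}-x^\star\|^2$. Alternatively, one can split $v_k-x^\star$ as $(x_k-x^\star)+\alpha^{-1}(x_k-x_{k-1})$ and then bound $\|x_k-x_{k-1}\|^2\le 2\|x_k-x^\star\|^2+2\|x_{k-1}-x^\star\|^2$. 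Either route gives a bound of the stated form. The exact constants are immaterial for what follows, so I would pick whichever route reproduces $6,4$ cleanly. The $\epsilon=1/2$ version does so up to replacing $3$ by $4$.

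For the consequence, Lemma~\ref{lem:energy-eq} gives pathwise
\[
\mathcal E_k\le m_2\big(\|x_k-x^\star\|^2+\|v_k-x^\star\|^2\big).
\]
Inserting the bound on $\|v_k-x^\star\|^2$ gives
\[
\mathcal E_k\le m_2\big(7\|x_k-x^\star\|^2+4\|x_{k-1}-x^\star\|^2\big)\le 7m_2\big(\|x_k-x^\star\|^2+\|x_{k-1}-x^\star\|^2\big).
\]
Taking expectations, which is valid by linearity and monotonicity with all quantities nonnegative, proves the claim with $\bar m_2:=7m_2$. This constant depends only on $\mu,L,\gamma$.

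The only delicate point is the case $k=0$. The relation \eqref{eq:v-gamma} only holds for $k\ge1$, because $v_0$ is an initialization. I would therefore state the inequality for $k\ge 1$, or adopt the convention that $x_{-1}$ is defined so that $v_0 = x_0 + (x_0-x_{-1})/\alpha$. Apart from this, the argument is routine, and nothing random enters except through the final expectation.
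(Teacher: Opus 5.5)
Your proof is correct and essentially the paper's. The paper uses exactly your ``alternative'' route: it splits $v_k-x^\star=(x_k-x^\star)+\alpha^{-1}(x_k-x_{k-1})$, applies $\|a+b\|^2\le 2\|a\|^2+2\|b\|^2$, and then bounds $\|x_k-x_{k-1}\|^2\le 2\|x_k-x^\star\|^2+2\|x_{k-1}-x^\star\|^2$ to get $6,4$ exactly, whereas your weighted-Young version with $\epsilon=1/2$ gives the slightly sharper $6,3$. For the consequence, the paper simply invokes Lemma~\ref{lem:energy-eq}, and you make the constant explicit as $\bar m_2=7m_2$; your remark that the identity only holds for $k\ge 1$ is a fair point the paper leaves implicit.
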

\noindent\textbf{Proof.} See Appendix~\ref{app:proof-2}.

\begin{lemma}[Scaling with $\alpha$: product form]\label{lem:scale}
%Let $\tau=\frac{1}{\alpha}+\frac{\mu}{\gamma}$ and $\lambda=\frac{\alpha}{\gamma(1+\tau)}$ with $\alpha\ge 1$, $\gamma>0$, $\mu>0$. Then there exists $K_1>0$ such that
Let $\tau=\frac{1}{\alpha}+\frac{\mu}{\gamma}$ and
$\lambda=\frac{\alpha}{\gamma(1+\tau)}$ with
$\alpha\ge 1$, $\gamma>0$, $\mu>0$. Then there exists
\(K_1>0\), depending only on \(\mu\) and \(\gamma\) and independent of
\(\alpha\), such that
\[
\frac{1}{(1+\lambda\mu)^2} \;\le\; \frac{K_1}{\alpha^2},
\quad
\frac{1}{(1+\lambda\mu)^2}\cdot \frac{\alpha}{(1+\tau)^2} \;\le\; \frac{K_1}{(1+\mu/\gamma)^2}\cdot \frac{1}{\alpha}.
\]
In particular, the product term is $\cO(1/\alpha)$.
\end{lemma}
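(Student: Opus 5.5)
The plan is to rewrite $\lambda\mu$ explicitly as a function of $\alpha$ and bound it below by a constant multiple of $\alpha$.

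\textbf{Explicit form of $\lambda\mu$.} Set $\beta:=\mu/\gamma>0$, so $\tau=1/\alpha+\beta$ and
\[
1+\tau=\frac{(1+\beta)\alpha+1}{\alpha},
\qquad
\lambda\mu=\frac{\alpha\beta}{1+\tau}=\frac{\beta\alpha^2}{(1+\beta)\alpha+1}.
\]

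\textbf{Linear lower bound.} For $\alpha\ge 1$ we have $(1+\beta)\alpha+1\le(2+\beta)\alpha$. Hence
\[
\lambda\mu\ \ge\ \frac{\beta}{2+\beta}\,\alpha,
\qquad\text{so}\qquad
1+\lambda\mu\ \ge\ \lambda\mu\ \ge\ c\,\alpha,
\quad c:=\frac{\beta}{2+\beta}.
\]

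\textbf{First inequality.} Squaring the lower bound gives $(1+\lambda\mu)^{-2}\le K_1/\alpha^2$ with
\[
K_1:=c^{-2}=\Big(\frac{2+\mu/\gamma}{\mu/\gamma}\Big)^2 .
\]
This constant depends only on $\mu$ and $\gamma$, as required.

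\textbf{Product bound.} Since $\tau\ge\beta$, we have $(1+\tau)^{-2}\le(1+\mu/\gamma)^{-2}$. Multiplying this by the first inequality and by $\alpha$ gives
\[
\frac{1}{(1+\lambda\mu)^2}\cdot\frac{\alpha}{(1+\tau)^2}\ \le\ \frac{K_1}{\alpha^2}\cdot\frac{\alpha}{(1+\mu/\gamma)^2}=\frac{K_1}{(1+\mu/\gamma)^2}\cdot\frac{1}{\alpha},
\]
which is exactly the stated form. The $\cO(1/\alpha)$ claim follows immediately.

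The only real work is the lower bound $\lambda\mu\gtrsim\alpha$. The hypothesis $\alpha\ge1$ is what allows the additive $+1$ in the denominator to be absorbed into a multiple of $\alpha$, and this absorption is what keeps $K_1$ independent of $\alpha$. Everything else is direct algebra.
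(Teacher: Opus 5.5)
Your proposal is correct and matches the paper's proof: both absorb the $1/\alpha$ term in $1+\tau$ using $\alpha\ge 1$ to get $1+\lambda\mu\ge \frac{\mu}{\gamma(2+\mu/\gamma)}\,\alpha$, which gives the same constant $K_1=\bigl((2+\mu/\gamma)/(\mu/\gamma)\bigr)^2$. The product bound then follows, as in the paper, from $1+\tau\ge 1+\mu/\gamma$.
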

\noindent\textbf{Proof.} See Appendix~\ref{app:proof-3}.

\begin{proposition}[One-step mean-square bound]\label{prop:onestep}
Under Assumptions~\ref{ass:sc}-\ref{ass:noise}, for $\alpha\ge 1$, there exist $C_1,C_2>0$ (independent of $\alpha$) such that
\[
\E\norm{x_{k+1}-x^\star}^2 \;\le\; \frac{C_1}{\alpha^2}\,\E \mathcal{E}_k \;+\; \frac{C_2\,\sigma^2}{\alpha}.
\]
\end{proposition}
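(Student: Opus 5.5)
The plan is to compare the update $x_{k+1}=\prox_{\lambda f}(\hat c_k)$ with the fixed point $x^\star=\prox_{\lambda f}(x^\star)$. The identity $x^\star=\prox_{\lambda f}(x^\star)$ holds because $\nabla f(x^\star)=0$. Lemma~\ref{lem:resolvent} then gives the pathwise bound
\[
\|x_{k+1}-x^\star\|\le (1+\lambda\mu)^{-1}\|c_k+\xi_k-x^\star\|.
\]
Squaring and taking expectations, I will expand
\[
\|c_k-x^\star+\xi_k\|^2=\|c_k-x^\star\|^2+2\ip{c_k-x^\star}{\xi_k}+\|\xi_k\|^2 .
\]
The cross term has zero expectation. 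The reason is that $c_k$ is a function of $(x_k,v_k)$, hence measurable with respect to $\eta_0,\dots,\eta_{k-1}$, while $\eta_k$ is independent of that past and has mean zero. This yields
\[
\E\|x_{k+1}-x^\star\|^2\le \frac{1}{(1+\lambda\mu)^2}\Big(\E\|c_k-x^\star\|^2+\E\|\xi_k\|^2\Big).
\]
This avoids the factor $2$ that the crude inequality $\|a+b\|^2\le2\|a\|^2+2\|b\|^2$ would introduce, although that inequality would also suffice.

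For the deterministic part, I will apply Lemma~\ref{lem:center} to get
\[
\|c_k-x^\star\|^2\le \frac{2}{1+\tau}\|v_k-x^\star\|^2+\frac{2\tau}{1+\tau}\|x_k-x^\star\|^2\le 2\big(\|x_k-x^\star\|^2+\|v_k-x^\star\|^2\big).
\]
Lemma~\ref{lem:energy-dom} then bounds the right-hand side by $4c_E\,\mathcal E_k$. Combining this with the first estimate of Lemma~\ref{lem:scale}, $(1+\lambda\mu)^{-2}\le K_1/\alpha^2$, the first term is at most
\[
\frac{4c_EK_1}{\alpha^2}\,\E\mathcal E_k .
\]
So I can take $C_1=4c_EK_1$, which depends only on $\mu$ and $\gamma$.

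For the noise part, I will use \eqref{eq:xi-var}, namely $\E\|\xi_k\|^2\le \alpha\sigma^2/(1+\tau)^2$. The second estimate of Lemma~\ref{lem:scale} then gives
\[
\frac{1}{(1+\lambda\mu)^2}\,\E\|\xi_k\|^2\le \frac{K_1}{(1+\mu/\gamma)^2}\cdot\frac{\sigma^2}{\alpha},
\]
so I can set $C_2=K_1/(1+\mu/\gamma)^2$. Both constants are independent of $\alpha$ because $c_E$ and $K_1$ are. If I had used the factor-$2$ splitting instead, it would only double both constants.

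The only delicate point is the vanishing of the cross term, which requires the measurability/independence structure just described. That structure holds for the exact iteration \eqref{eq:proxsub}, and I expect no other step to cause difficulty. The $\alpha$-dependence is entirely encoded in Lemma~\ref{lem:scale}, so once that lemma is available the proposition follows by the chain of inequalities above.
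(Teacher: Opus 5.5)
Your proposal is correct and follows the paper's proof essentially step for step. You apply the resolvent contraction of Lemma~\ref{lem:resolvent} against the fixed point $x^\star=\prox_{\lambda f}(x^\star)$, note that the cross term vanishes because $\xi_k$ is mean-zero and independent of $(x_k,v_k)$, and then chain Lemma~\ref{lem:center}, Lemma~\ref{lem:energy-dom}, and the two estimates of Lemma~\ref{lem:scale}; the only difference is that you make the constants explicit ($C_1=4c_EK_1$, $C_2=K_1/(1+\mu/\gamma)^2$) where the paper leaves them unnamed.
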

\noindent\textbf{Proof.} See Appendix~\ref{app:proof-5}.
\color{black}
\paragraph{Interpretation.}
Proposition~\ref{prop:onestep} separates the two effects created by the implicit resolvent. The deterministic part is damped by the squared resolvent contraction factor \((1+\lambda\mu)^{-2}=O(\alpha^{-2})\), while the center-noise variance is of size \(O(\alpha)\) before the resolvent is applied; after multiplication by the same contraction factor, it contributes only \(O(1/\alpha)\). This is the elementary mechanism behind the \(O(1/\alpha)\) stationary mean-square scaling.
\color{black}

\begin{remark}[One-step vs.\ two-step: what recurses?]
Proposition~\ref{prop:onestep} gives a \emph{single-step} bound for $x_{k+1}$ in terms of the current energy $\mathcal E_k$.
However, because the implicit-Euler coupling yields
\[
v_k \;=\; x_k + \frac{x_k-x_{k-1}}{\alpha},
\]
any recursion written purely in terms of $\E\|x_k-x^\star\|^2$ is naturally \emph{two-step} (it involves both $x_k$ and $x_{k-1}$ through $v_k$).
To obtain a genuine \emph{one-step} inequality, we introduce a Lyapunov quantity that augments the state with one lag, namely
\[
V_k := \E\|x_k-x^\star\|^2 + \theta\,\E\|x_{k-1}-x^\star\|^2,
\]
for a suitable $\theta=\Theta(1/\alpha)$.
The next theorem states a one-step recursion for $\{V_k\}$ with both the contraction and the stationary mean-square error bound scaling as $1/\alpha$.
\end{remark}

\color{black}
\begin{theorem}[IRON mean-square contraction and $O(1/\alpha)$ stationary error]\label{thm:main}
Under Assumptions~\ref{ass:sc}--\ref{ass:noise} with constant stepsize \(\alpha\ge 1\) and fixed \(\gamma>0\), there exist constants \(G>0\), \(C>0\), and a choice \(\theta=\Theta(1/\alpha)\), all independent of \(\alpha\) except for the explicit scaling of \(\theta\), such that the Lyapunov quantity
\[
V_k \;:=\; \E\|x_k-x^\star\|^2 \;+\; \theta\,\E\|x_{k-1}-x^\star\|^2
\]
satisfies, for all \(k\ge 1\),
\begin{equation}\label{eq:Vk-recursion}
V_{k+1} \;\le\; \frac{G}{\alpha}\,V_k \;+\; \frac{C\,\sigma^2}{\alpha}.
\end{equation}
Consequently, if \(\alpha>G\) and \(\rho:=G/\alpha\in(0,1)\), then for all \(k\ge 1\),
\[
V_{k} \;\le\; \rho^{k-1}V_1 \;+\; \frac{C\,\sigma^2}{\alpha}\cdot\frac{1-\rho^{k-1}}{1-\rho},
\]
and therefore
\[
\E\|x_k-x^\star\|^2 \;\le\; V_k
\;\le\; \rho^{k-1}V_1 \;+\; \frac{C\,\sigma^2}{\alpha(1-G/\alpha)}.
\]
In particular, if \(\alpha\ge 2G\), then
\[
\limsup_{k\to\infty}\E\|x_k-x^\star\|^2
\;\le\;
\frac{2C}{\alpha}\,\sigma^2.
\]
\end{theorem}
\color{black}

\noindent\textbf{Proof.} See Appendix~\ref{app:proof-6}.
\color{black}
\paragraph{Takeaway.}
For sufficiently large \(\alpha\), the implicit step is not merely stable: it becomes more contractive as \(\alpha\) grows, and the stationary mean-square error bound decreases proportionally to \(1/\alpha\). This behavior is opposite to the usual explicit-step intuition, where increasing the stepsize typically amplifies stochastic fluctuations.
\color{black}

We next refine and extend Theorem~\ref{thm:main} in three directions: (i) the quadratic case, where the dynamics decouple in the eigenbasis and yield sharper constants; (ii) varying stepsizes $(\alpha_k,\gamma_k)$, leading to a nonasymptotic bound with a history-weighted noise term; and (iii) inexact inner solves, where residual-based stopping rules translate into explicit perturbations of the mean-square recursion.

\subsection{Quadratic objectives: exact covariance and asymptotic constant}
\label{sec:quad-exact}
Consider $f(x)=\tfrac12 x^\top A x - b^\top x$ with $A\succeq \mu I$ and minimizer $x^\star=A^{-1}b$. 

Set $\Sigma=\rho^2 I$ for clarity, so the noise is isotropic and the coordinates decouple in the eigenbasis of $A$. For a general covariance $\Sigma$, the $O(1/\alpha)$ variance decay still holds with $\sigma^2=\mathrm{tr}(\Sigma)$ as a proxy, but the per-eigendirection decoupling is no longer exact unless $\Sigma$ is diagonal in the eigenbasis of $A$ (for example if $A\Sigma=\Sigma A$). 

Work in the eigenbasis of $A$: let $A=Q\Lambda Q^\top$, $\Lambda=\mathrm{diag}(a_1,\dots,a_n)$, $a_i\in[\mu,L]$, and define errors $e_k:=Q^\top(x_k-x^\star)$ and $w_k:=Q^\top(v_k-x^\star)$. Recall
\[
\tau=\frac{1}{\alpha}+\frac{\mu}{\gamma},\qquad
\lambda=\frac{\alpha}{\gamma(1+\tau)},\qquad
r_i:=\frac{1}{1+\lambda a_i}.
\]
From the prox/optimality form \eqref{eq:proxsub}-\eqref{eq:prox-noisy-opt} and the state update \eqref{eq:v-gamma}, each coordinate $(e_{k,i},w_{k,i})$ obeys the exact linear stochastic recursion
\begin{equation}\label{eq:quad-2x2}
\begin{aligned}
e_{k+1,i} \;&=\; \frac{r_i}{1+\tau}\,w_{k,i} \;+\; \frac{r_i\tau}{1+\tau}\,e_{k,i} \;+\; r_i\,\xi_{k,i},\\[-1mm]
w_{k+1,i} \;&=\; \Big(\tfrac{1}{\alpha}+1\Big)e_{k+1,i} \;-\; \tfrac{1}{\alpha}e_{k,i}
\end{aligned}
\end{equation}
where the injected center noise satisfies $\E[\xi_{k,i}]=0$ and $\E[\xi_{k,i}^2]=\alpha\rho^2/(1+\tau)^2$ (isotropic case).

%\begin{proposition}[Deterministic contraction and explicit stationary mean-square error bound for quadratics]\label{prop:quad-explicit}
\begin{proposition}[Quadratic contraction and explicit stationary constant]\label{prop:quad-explicit}
Let $\alpha\ge 1$, $\gamma>0$, and $\mu>0$. For each eigen-direction $i$:
\begin{enumerate}
\item (Contraction) There exists $C_i=C_i(\mu,L,\gamma)$ such that
\begin{equation*}
    \begin{aligned}
        \E\!\left[e_{k+1,i}^2 + w_{k+1,i}^2 \,\middle|\, e_{k,i},w_{k,i}\right]
& \;\le\; \frac{C_i}{\alpha^2}\,(e_{k,i}^2+w_{k,i}^2) \\
& \quad \;+\; \frac{C_i\,\rho^2}{\alpha}.
    \end{aligned}
\end{equation*}

Consequently, summing over $i$ yields the same $O(1/\alpha^2)$ contraction plus $O(1/\alpha)$ stationary mean-square error bound for the full state $(e_k,w_k)$, and therefore implies the $O(1/\alpha)$ Lyapunov recursion of Theorem~\ref{thm:main} (since $w_k$ couples $(e_k,e_{k-1})$ through \eqref{eq:v-gamma}).

\color{black}
\item (Asymptotic explicit constant) For all sufficiently large \(\alpha\), the linear recursion is stable and admits a stationary law. Denote by \(x_\infty\) the corresponding stationary position variable. Then, as \(\alpha\to\infty\),
\color{black}
\begin{equation*}
    \begin{aligned}
        \lim_{\alpha\to\infty}\; \alpha\cdot \E\|x_\infty-x^\star\|^2
& \;=\; \rho^2\,\gamma^2\,\sum_{i=1}^n \frac{1}{a_i^2} \\
& \;=\; \frac{\gamma^2}{n}\,\sigma^2\,\mathrm{tr}\!\big(A^{-2}\big).
    \end{aligned}
\end{equation*}
Thus the stationary MSE decays exactly like $C_{\mathrm{quad}}/\alpha$ with $C_{\mathrm{quad}}=(\gamma^2/n)\,\sigma^2\,\mathrm{tr}(A^{-2})$.
\end{enumerate}
\end{proposition}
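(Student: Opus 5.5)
The plan is to work coordinatewise in the eigenbasis, using the exact recursion \eqref{eq:quad-2x2}. Since \(\Sigma=\rho^2 I\) and \(Q\) is orthogonal, \(Q^\top\xi_k\) has the same law as \(\xi_k\). The coordinates \(\xi_{k,i}\) are therefore independent, centered, with variance \(s_\alpha^2:=\alpha\rho^2/(1+\tau)^2\), and independent of \((e_k,w_k)\). Throughout I use the scaling facts of Lemma~\ref{lem:scale}: \(\tau\in[\mu/\gamma,\,1+\mu/\gamma]\) for \(\alpha\ge1\), \(\lambda\ge \alpha/(\gamma(2+\mu/\gamma))\), hence \(r_i\le (1+\lambda\mu)^{-1}\le \sqrt{K_1}/\alpha\).

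\emph{Part 1 (contraction).} Write \(e_{k+1,i}=r_i D_{k,i}+r_i\xi_{k,i}\) with \(D_{k,i}:=\frac{1}{1+\tau}w_{k,i}+\frac{\tau}{1+\tau}e_{k,i}\). The coefficients of \(D_{k,i}\) form a convex combination, so convexity of the square gives \(D_{k,i}^2\le e_{k,i}^2+w_{k,i}^2\). The cross term vanishes in conditional expectation because \(\xi_{k,i}\) is independent and centered. Hence
\[
\E[e_{k+1,i}^2\mid e_{k,i},w_{k,i}]\le r_i^2\big(e_{k,i}^2+w_{k,i}^2\big)+r_i^2 s_\alpha^2 .
\]
The first term is at most \(\frac{K_1}{\alpha^2}(e_{k,i}^2+w_{k,i}^2)\), and the second is \(O(\rho^2/\alpha)\) by the product bound of Lemma~\ref{lem:scale}.

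For the velocity coordinate, \(w_{k+1,i}^2\le 2(1+1/\alpha)^2e_{k+1,i}^2+\frac{2}{\alpha^2}e_{k,i}^2\le 8e_{k+1,i}^2+\frac{2}{\alpha^2}e_{k,i}^2\). Combining the two bounds yields the claim with some \(C_i\) depending only on \(\mu,L,\gamma\). Summing over \(i\) gives the full-state statement. The link to Theorem~\ref{thm:main} then follows as in its proof, via Lemma~\ref{lem:vx}.

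\emph{Part 2 (explicit constant).} I eliminate \(w\) using \(w_{k,i}=(1+\tfrac1\alpha)e_{k,i}-\tfrac1\alpha e_{k-1,i}\). This turns each coordinate into a scalar AR(2) process
\[
e_{k+1,i}=a_i e_{k,i}+b_i e_{k-1,i}+r_i\xi_{k,i},\qquad a_i=\frac{r_i(1+\tau+1/\alpha)}{1+\tau},\quad b_i=-\frac{r_i}{\alpha(1+\tau)} .
\]
Here \(a_i=O(1/\alpha)\) and \(b_i=O(1/\alpha^2)\). The roots of \(z^2-a_iz-b_i\) therefore lie inside the unit disc for \(\alpha\) large, so the companion matrix is a strict contraction and a unique stationary law exists, with finite second moments since the noise is Gaussian. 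The stationary variance follows either from the standard AR(2) formula or from the \(2\times 2\) discrete Lyapunov equation \(P=MPM^\top+r_i^2s_\alpha^2 e_1e_1^\top\):
\[
\Var_\infty(e_i)=\frac{(1-b_i)}{(1+b_i)\big((1-b_i)^2-a_i^2\big)}\,r_i^2s_\alpha^2=(1+O(1/\alpha))\,r_i^2s_\alpha^2 .
\]

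It remains to take limits. As \(\alpha\to\infty\), \(\tau\to\mu/\gamma\), \(\lambda/\alpha\to1/(\gamma+\mu)\), and \(\alpha r_i\to(\gamma+\mu)/a_i\). Hence
\[
\alpha\, r_i^2 s_\alpha^2=\frac{(\alpha r_i)^2\rho^2}{(1+\tau)^2}\to\frac{(\gamma+\mu)^2}{a_i^2}\cdot\frac{\gamma^2\rho^2}{(\gamma+\mu)^2}=\frac{\rho^2\gamma^2}{a_i^2}.
\]
Summing over \(i\), using \(\E\|x_\infty-x^\star\|^2=\sum_i\Var_\infty(e_i)\) (orthogonal invariance) and \(\sigma^2=n\rho^2\), gives the stated constant \(\frac{\gamma^2}{n}\sigma^2\tr(A^{-2})\).

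The main obstacle is Part 2. One must justify existence and uniqueness of the stationary law and carry out the variance computation without losing the \(1+O(1/\alpha)\) factor. I would handle this by working with the AR(2) companion form, where stability and the explicit Lyapunov solution are elementary. The factors \(a_i,b_i\) are shown to be negligible relative to the leading \(r_i^2s_\alpha^2\) term.
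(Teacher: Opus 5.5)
Your proof is correct. Part~1 is the paper's computation with explicit constants. The paper writes $z_{k+1,i}=M_iz_{k,i}+g_i\xi_{k,i}$, notes that every entry of $M_i$ and $g_i$ is $O(1/\alpha)$ (including the $(2,1)$ entry, which carries an extra $-1/\alpha$), and bounds $\|M_iz_{k,i}\|^2+\E[\xi_{k,i}^2]\|g_i\|^2$. Your convexity bound on $D_{k,i}$ and your estimate of $w_{k+1,i}^2$ do the same thing by hand.

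Part~2 genuinely differs from the paper. The paper never eliminates $w$. It writes the stationary covariance as $P_i=\sum_{t\ge0}M_i^tQ_i(M_i^\top)^t$ and keeps the $t=0$ term $\E[\xi_{k,i}^2]r_i^2$. It then uses $\|M_i\|_2\le K_i/\alpha$ to show that the $t\ge1$ tail is $O(\alpha^{-3})$. That single norm bound gives both stability (a genuine Euclidean contraction) and the remainder estimate, with nothing to solve.

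You instead reduce to a scalar AR(2) process. Its characteristic polynomial $z^2-a_iz-b_i$ is exactly that of $M_i$: $\tr M_i$ equals your AR coefficient $a_i$, and $\det M_i=-b_i$. You then apply the closed-form AR(2) variance. This gives an exact value for every stable $\alpha$, consistent with the Lyapunov computation of Section~\ref{sec:quad-lyapunov}. It also gives a sharper prefactor than you state. Since your $a_i^2$ and $b_i$ are both $O(\alpha^{-2})$, the prefactor is $1+O(\alpha^{-2})$, which yields the same $O(\alpha^{-3})$ remainder as the paper.

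Two small fixes. First, the companion matrix $\begin{psmallmatrix}a_i&b_i\\1&0\end{psmallmatrix}$ is not a Euclidean contraction, because its unit entry forces operator norm at least $1$. What you actually have is spectral radius below one, and that is all you need for existence and uniqueness of the stationary law. Second, you use $a_i$ both for the AR coefficient and for the eigenvalue in $\alpha r_i\to(\gamma+\mu)/a_i$, so rename one of them.
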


\color{black}

\begin{proof}
See Appendix~\ref{app:quad-proof}.
\end{proof}

\color{black}

\color{black}
\paragraph{Interpretation.}
In the quadratic case the mechanism is completely transparent: in each eigendirection the deterministic transition matrix is \(O(1/\alpha)\), while the effective noise injection into the center has variance \(O(\alpha)\). The resolvent gain \(r_i=O(1/\alpha)\) turns this into a stationary position variance of order \(O(1/\alpha)\), with an explicit constant depending on \(a_i^{-2}\).
\color{black}

\paragraph{Comparison to the general bound.}
Theorem~\ref{thm:main} guarantees $\E\|x_\infty-x^\star\|^2\le \frac{C\,\sigma^2}{\alpha}$ for a constant $C$ depending on $(\mu,L,\gamma)$. In the quadratic case we obtain the sharper, explicit constant

\[
C_{\mathrm{quad}}=\frac{\gamma^2}{n}\,\sigma^2\,\mathrm{tr}(A^{-2})
\;\le\; \frac{\gamma^2}{\mu^2}\,\sigma^2
\quad\text{(since $\mathrm{tr}(A^{-2})\le n/\mu^2$).}
\]

\begin{corollary}[Condition-number bounds for the quadratic constant]
\label{cor:quad-kappa}
For $f(x)=\frac12 x^\top A x-b^\top x$ with $\mu I \preceq A \preceq L I$, the explicit asymptotic constant from Proposition~\ref{prop:quad-explicit} satisfies
\[
C_{\mathrm{quad}}
=\frac{\gamma^2}{n}\,\sigma^2\,\mathrm{tr}(A^{-2})
\;\in\; 
\frac{\gamma^2\sigma^2}{\mu^2}\,\Big[\,\frac{1}{\kappa^2},\ 1\,\Big]
\]
\end{corollary}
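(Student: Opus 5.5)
The corollary is a direct consequence of the spectral bounds on $A$ applied to the explicit constant of Proposition~\ref{prop:quad-explicit}; no dynamics enter. The plan is to write $\mathrm{tr}(A^{-2})$ in the eigenbasis, bound each term, and substitute.

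First, take the eigendecomposition $A=Q\Lambda Q^\top$ with $a_i\in[\mu,L]$, as in Section~\ref{sec:quad-exact}. This gives
\[
\mathrm{tr}(A^{-2})=\sum_{i=1}^n \frac{1}{a_i^2}.
\]
Since $t\mapsto 1/t^2$ is decreasing on $(0,\infty)$, each term satisfies
\[
\frac{1}{L^2}\le \frac{1}{a_i^2}\le \frac{1}{\mu^2}.
\]
Summing over the $n$ eigenvalues yields
\[
\frac{n}{L^2}\le \mathrm{tr}(A^{-2})\le \frac{n}{\mu^2}.
\]

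Next, multiply by $\gamma^2\sigma^2/n$, which is nonnegative. This gives
\[
\frac{\gamma^2\sigma^2}{L^2}\le C_{\mathrm{quad}}\le \frac{\gamma^2\sigma^2}{\mu^2}.
\]
Using $L=\kappa\mu$ from Assumption~\ref{ass:sc}, the lower bound equals $\frac{\gamma^2\sigma^2}{\mu^2}\cdot\frac{1}{\kappa^2}$, which is the stated interval.

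The only point needing care is the normalization of $\sigma^2$. Proposition~\ref{prop:quad-explicit} writes the constant as $\rho^2\gamma^2\sum_i a_i^{-2}$, and in the isotropic setting $\sigma^2=\rho^2 n$. So $\rho^2=\sigma^2/n$, and the factor $1/n$ in $C_{\mathrm{quad}}$ cancels exactly against the $n$ produced when summing the eigenvalue bounds. I would state this identification explicitly so that the interval does not depend on $n$.

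Finally, I would note that both endpoints are attained, so the bounds are sharp: the upper one when $A=\mu I$ and the lower one when $A=LI$.
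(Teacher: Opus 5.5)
Your proposal is correct and follows essentially the same argument as the paper: bound each $1/a_i^2$ between $1/L^2$ and $1/\mu^2$, sum over the $n$ eigenvalues, multiply by $\gamma^2\sigma^2/n$, and rewrite $1/L^2=1/(\kappa^2\mu^2)$. Your remarks on the $\sigma^2=n\rho^2$ normalization and on sharpness (attained at $A=\mu I$ and $A=LI$) are accurate extras. They are not needed for the statement as written, since $C_{\mathrm{quad}}$ is already defined there with the $1/n$ factor.
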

\begin{proof}
Since the eigenvalues $a_i$ of $A$ lie in $[\mu,L]$, we have
$\sum_{i=1}^n \frac{1}{L^2} \le \sum_{i=1}^n \frac{1}{a_i^2} \le \sum_{i=1}^n \frac{1}{\mu^2}$.
Multiply by $(\gamma^2/n)\sigma^2$.
\end{proof}

\subsubsection{Exact stationary covariance via a discrete Lyapunov equation}
\label{sec:quad-lyapunov}

Proposition~\ref{prop:quad-explicit} identifies the \emph{asymptotic} stationary noise constant as $\alpha\to\infty$.
For quadratics, one can in fact compute the stationary second moments \emph{exactly} for any fixed stepsize $\alpha$,
because \eqref{eq:quad-2x2} is a linear time-invariant stochastic recursion.
This subsection makes this computation explicit and shows how the constant in Item~2 emerges from the exact formula.

\paragraph{Step 1: write each eigendirection as a linear state-space model.}
Fix an eigenvalue $a=a_i$ and denote $(e_k,w_k):=(e_{k,i},w_{k,i})$.
Introduce $s:=1+\tau$ and define
\begin{equation*}
    \begin{aligned}
        & a_1:=\frac{r\tau}{s},\qquad b_1:=\frac{r}{s},\qquad
c_1:=\Big(1+\frac{1}{\alpha}\Big)a_1-\frac{1}{\alpha},\\ 
& d_1:=\Big(1+\frac{1}{\alpha}\Big)b_1,
    \end{aligned}
\end{equation*}
so that \eqref{eq:quad-2x2} can be written as
\[
\begin{pmatrix} e_{k+1}\\ w_{k+1}\end{pmatrix}
=
\underbrace{\begin{pmatrix} a_1 & b_1\\ c_1 & d_1\end{pmatrix}}_{=:M(a,\alpha)}
\begin{pmatrix} e_{k}\\ w_{k}\end{pmatrix}
+
\underbrace{r\begin{pmatrix}1\\ 1+\tfrac{1}{\alpha}\end{pmatrix}}_{=:g(a,\alpha)}\,\xi_k,
\]
with $\E[\xi_k]=0$ and $\Var(\xi_k)=\frac{\alpha\rho^2}{s^2}$.
Equivalently, with $z_k:=(e_k,w_k)$,
\begin{equation}\label{eq:quad-state}
z_{k+1}=M(a,\alpha)\,z_k + g(a,\alpha)\,\xi_k.
\end{equation}

\paragraph{Step 2: the stationary covariance solves a discrete Lyapunov equation.}
Assume the recursion is stable (in particular, for large $\alpha$, stability holds since $M(a,\alpha)=O(1/\alpha)$).
At stationarity, the mean is zero in error coordinates, and the covariance
\[
P(a,\alpha):=\E[z_k z_k^\top]
=
\begin{pmatrix}
p_{11} & p_{12}\\
p_{12} & p_{22}
\end{pmatrix}
\]
satisfies the \emph{discrete Lyapunov equation}
\begin{equation}\label{eq:quad-lyap}
\begin{aligned}
  & P(a,\alpha)=M(a,\alpha)\,P(a,\alpha)\,M(a,\alpha)^\top + Q(a,\alpha), \\
& Q(a,\alpha):=\Var(\xi_k)\,g(a,\alpha)g(a,\alpha)^\top.  
\end{aligned}
\end{equation}
In particular, the stationary MSE of the coordinate is $\E[e_k^2]=p_{11}$.

\paragraph{Step 3: explicit linear system for the moments.}
Expanding \eqref{eq:quad-lyap} gives a $3\times 3$ linear system for $(p_{11},p_{12},p_{22})$.
Writing $M=\begin{psmallmatrix}a_1&b_1\\c_1&d_1\end{psmallmatrix}$ and
$Q=\begin{psmallmatrix}q_{11}&q_{12}\\q_{12}&q_{22}\end{psmallmatrix}$, one obtains
\[
\begin{pmatrix}
1-a_1^2 & -2a_1b_1 & -b_1^2\\
-a_1c_1 & 1-(a_1d_1+b_1c_1) & -b_1d_1\\
-c_1^2 & -2c_1d_1 & 1-d_1^2
\end{pmatrix}
\begin{pmatrix}p_{11}\\p_{12}\\p_{22}\end{pmatrix}
=
\begin{pmatrix}q_{11}\\q_{12}\\q_{22}\end{pmatrix},
\]
where, using $\Var(\xi_k)=\alpha\rho^2/s^2$ and $g=r(1,1+\tfrac1\alpha)^\top$,
\begin{equation*}
    \begin{aligned}
        & q_{11}=\frac{\alpha\rho^2}{s^2}\,r^2,\qquad
q_{12}=\frac{\alpha\rho^2}{s^2}\,r^2\Big(1+\frac1\alpha\Big),\\
& q_{22}=\frac{\alpha\rho^2}{s^2}\,r^2\Big(1+\frac1\alpha\Big)^2.
    \end{aligned}
\end{equation*}

Thus, for each eigen-direction, the stationary second moments are obtained by solving this explicit $3\times 3$ system.

\paragraph{Step 4: recovering the asymptotic constant.}
As $\alpha\to\infty$, one has $s=1+\tau\to 1+\mu/\gamma$ and
\[
r=\frac{1}{1+\lambda a}
=\frac{1}{1+\frac{\alpha a}{\gamma s}}
\sim \frac{\gamma s}{\alpha a},
\qquad\text{so}\qquad
\frac{r^2\,\alpha\rho^2}{s^2}\sim \frac{\gamma^2\rho^2}{\alpha a^2}.
\]
Moreover, $M(a,\alpha)=O(1/\alpha)$, hence the correction term $M P M^\top$ in \eqref{eq:quad-lyap} is $o(\alpha^{-1})$.
Therefore $p_{11}=\E[e_\infty^2]\sim q_{11}$, yielding
\[
\E[e_{\infty,i}^2]\sim \frac{\gamma^2\rho^2}{\alpha a_i^2},
\]
and summing over $i$ recovers Item~2 of Proposition~\ref{prop:quad-explicit}:
\begin{equation*}
    \begin{aligned}
        \alpha\cdot \E\|x_\infty-x^\star\|^2
\;\to\;
\rho^2\gamma^2\sum_{i=1}^n \frac{1}{a_i^2}
=
\frac{\gamma^2}{n}\,\sigma^2\,\mathrm{tr}(A^{-2}),
    \end{aligned}
\end{equation*}
with $\sigma^2=n\rho^2$. This "exact Lyapunov" viewpoint explains the variance decay mechanism: for large $\alpha$ the linear dynamics matrix $M$ becomes strongly contractive, so the stationary covariance is dominated by the injected noise covariance $Q$, which scales as $1/\alpha$.

%
% ===================== Varying stepsizes =====================

\subsection{Varying stepsizes $\alpha_k$ and $\gamma_k$}\label{sec:varying}
Let $\alpha_k\ge 1$ with $\underline{\alpha}\le \alpha_k\le \overline{\alpha}$, and update $\gamma_k$ via \eqref{eq:v-gamma}.
Then $\gamma_k\in[\min(\gamma_0,\mu),\max(\gamma_0,\mu)]$ for all $k$.

As in the constant stepsize case, the coupling $v_k=x_k+\frac{1}{\alpha_{k-1}}(x_k-x_{k-1})$ makes the natural recursion two-step.
We therefore use a one-lag Lyapunov quantity.

\color{black}
\begin{theorem}[Nonasymptotic bound with varying steps]\label{thm:vary}
Assume \(\alpha_k\ge 1\), \(\underline{\alpha}\le \alpha_k\le \overline{\alpha}\), and let \(\gamma_k\) be updated by \eqref{eq:v-gamma}. There exist constants \(G>0\), \(C>0\), and a choice \(\theta=\Theta(1/\underline{\alpha})\), depending only on \((\mu,L,\underline{\alpha},\overline{\alpha},\gamma_0)\), such that the Lyapunov sequence
\[
V_k \;:=\; \E\|x_k-x^\star\|^2 \;+\; \theta\,\E\|x_{k-1}-x^\star\|^2
\]
satisfies, for all \(k\ge 1\),
\begin{equation}\label{eq:Vk-recursion-varying}
V_{k+1} \;\le\; \frac{G}{\underline{\alpha}}\,V_k \;+\; \frac{C\,\sigma^2}{\alpha_k}.
\end{equation}
Consequently, with \(\rho:=G/\underline{\alpha}\), we have for all \(k\ge 1\),
\[
V_k \;\le\; \rho^{k-1}V_1 \;+\; \sum_{t=1}^{k-1}\rho^{k-1-t}\,\frac{C\,\sigma^2}{\alpha_t}.
\]
In particular, if \(\underline{\alpha}>G\), then
\[
\limsup_{k\to\infty}\E\|x_k-x^\star\|^2
\;\le\;
\limsup_{k\to\infty}V_k
\;\le\;
\frac{C\,\sigma^2}{\underline{\alpha}(1-G/\underline{\alpha})}.
\]
\end{theorem}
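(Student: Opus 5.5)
I will follow the proof of Theorem~\ref{thm:main} step by step, replacing every constant-stepsize quantity by its time-indexed version and then bounding it uniformly on \([\underline{\alpha},\overline{\alpha}]\). The first task is to freeze the parameter ranges. From \eqref{eq:v-gamma}, \(\gamma_{k+1}\) is a convex combination of \(\gamma_k\) and \(\mu\), with weights \(1/(1+\alpha_k)\) and \(\alpha_k/(1+\alpha_k)\). Hence \(\gamma_k\in[\gamma_-,\gamma_+]:=[\min(\gamma_0,\mu),\max(\gamma_0,\mu)]\) by induction. Consequently
\[
\tau_k=\tfrac1{\alpha_k}+\tfrac{\mu}{\gamma_k}\in\bigl[\tfrac{\mu}{\gamma_+},\,1+\tfrac{\mu}{\gamma_-}\bigr],
\]
and \(1+\tau_k\) is bounded above and below by constants independent of the steps. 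It follows that
\[
\lambda_k=\frac{\alpha_k}{\gamma_k(1+\tau_k)}\ge c_\lambda\,\alpha_k
\]
with \(c_\lambda>0\) depending only on \((\mu,\gamma_0)\).

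The second step is the one-step bound, the analogue of Proposition~\ref{prop:onestep}. I write \(x_{k+1}=\prox_{\lambda_k f}(c_k+\xi_k)\) and \(x^\star=\prox_{\lambda_k f}(x^\star)\). Lemma~\ref{lem:resolvent} then gives
\[
\|x_{k+1}-x^\star\|\le (1+\lambda_k\mu)^{-1}\bigl(\|c_k-x^\star\|+\|\xi_k\|\bigr).
\]
I square this, use \((a+b)^2\le 2a^2+2b^2\), and take expectations; independence of \(\xi_k\) is not even needed at this stage. Lemma~\ref{lem:center} holds verbatim with \(\tau_k\), and \eqref{eq:xi-var} gives \(\E\|\xi_k\|^2\le \alpha_k\sigma^2/(1+\tau_k)^2\). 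With the lower bound on \(\lambda_k\) this yields
\[
\E\|x_{k+1}-x^\star\|^2\le \frac{K}{\alpha_k^2}\bigl(\E\|v_k-x^\star\|^2+\E\|x_k-x^\star\|^2\bigr)+\frac{K'\sigma^2}{\alpha_k}.
\]
Working directly with \(\|v_k-x^\star\|^2\) avoids the \(\gamma_k\)-weighted energy, whose weight would otherwise drift.

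The third step is the velocity coupling, which replaces Lemma~\ref{lem:vx}. Since \(v_k=x_k+\alpha_{k-1}^{-1}(x_k-x_{k-1})\) and \(\alpha_{k-1}\ge1\),
\[
\|v_k-x^\star\|^2\le 6\|x_k-x^\star\|^2+4\|x_{k-1}-x^\star\|^2,
\]
by the same argument as in Appendix~\ref{app:proof-2}, with no dependence on \(\alpha_{k-1}\) beyond \(\alpha_{k-1}\ge1\). Substituting gives
\[
\E\|x_{k+1}-x^\star\|^2\le \frac{K''}{\alpha_k^2}\bigl(\E\|x_k-x^\star\|^2+\E\|x_{k-1}-x^\star\|^2\bigr)+\frac{K'\sigma^2}{\alpha_k}.
\]

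The fourth step assembles the Lyapunov recursion. Then
\[
V_{k+1}=\E\|x_{k+1}-x^\star\|^2+\theta\,\E\|x_k-x^\star\|^2\le\Bigl(\tfrac{K''}{\alpha_k^2}+\theta\Bigr)\E\|x_k-x^\star\|^2+\tfrac{K''}{\alpha_k^2}\E\|x_{k-1}-x^\star\|^2+\tfrac{K'\sigma^2}{\alpha_k}.
\]
I choose \(\theta:=1/\underline{\alpha}\). Using \(\alpha_k\ge\underline{\alpha}\ge1\), the first coefficient is at most \((K''+1)/\underline{\alpha}\). The second coefficient is at most
\[
\frac{K''}{\underline{\alpha}^2}=\frac{K''}{\underline{\alpha}}\,\theta,
\]
so it is dominated by \(\tfrac{K''}{\underline{\alpha}}\) times the lag term \(\theta\,\E\|x_{k-1}-x^\star\|^2\) of \(V_k\). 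Taking \(G:=K''+1\) gives \eqref{eq:Vk-recursion-varying} with \(C:=K'\).

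Finally, I unroll the linear recursion by induction. This gives the displayed sum. When \(\rho<1\), I bound \(1/\alpha_t\le1/\underline{\alpha}\) and sum the geometric series, which gives the \(\limsup\) bound via \(\E\|x_k-x^\star\|^2\le V_k\).

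I expect the main obstacle to be step two. One has to check carefully that all constants are uniform in the time-varying \(\gamma_k\) and \(\tau_k\). It is also essential that the deterministic contraction factor is \(O(1/\alpha_k^2)\) rather than \(O(1/\alpha_k)\): this extra factor is what allows a \(\theta=\Theta(1/\underline{\alpha})\) lag weight to absorb the two-step coupling. If the uniform bounds on \(\gamma_k\) turn out to require dependence on \(\overline{\alpha}\), that is harmless, since the statement allows it.
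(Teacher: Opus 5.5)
Your proposal is correct and follows essentially the same route as the paper's proof. Both arguments use:
\begin{itemize}
\item the uniform bounds $\gamma_k\in[\min(\gamma_0,\mu),\max(\gamma_0,\mu)]$ from the convex-combination form of the damping update;
\item a resolvent-contraction one-step bound with an $O(1/\alpha_k^{2})$ deterministic factor and an $O(\sigma^2/\alpha_k)$ noise term;
\item the $6$/$4$ velocity coupling;
\item a one-lag weight $\theta=\Theta(1/\underline{\alpha})$, followed by unrolling the recursion.
\end{itemize}

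Your deviations are harmless. First, you bypass the $\gamma_k$-weighted energy $\mathcal{E}_k^{\mathrm{var}}$, which the paper only bounds back to $\|x_k-x^\star\|^2+\|v_k-x^\star\|^2$ anyway. Second, you use $(a+b)^2\le 2a^2+2b^2$ instead of a vanishing cross term; this costs a factor $2$ but does not need $\E[\xi_k\mid x_k,v_k]=0$. Third, you take $\theta=1/\underline{\alpha}$ with $G=K''+1$, where the paper takes $\theta=\sqrt{B}/\underline{\alpha}$ with $G=2\sqrt{B}+A$.
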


\begin{proof}
See Appendix~\ref{app:proof-varying-steps}.
\end{proof}

\paragraph{Interpretation.}
The role of the varying-stepsize result is to show that the \(O(1/\alpha)\) stationary mean-square control mechanism is not an artifact of freezing all parameters. Since the damping update keeps \(\gamma_k\) in a compact interval and the stepsizes remain uniformly bounded below, the resolvent contraction and the noise-scaling estimates hold uniformly in \(k\). The same one-lag Lyapunov argument therefore gives a nonasymptotic bound with a history-weighted noise term.

\color{black}

% ===================== Inexact inner solves =====================

\subsection{Inexact inner solves (LM/Newton)}\label{sec:inexact}
Let $x_{k+1}=\prox_{\lambda_k f}(c_k+\xi_k)$ denote the exact resolvent point, and let the inner solver return an approximation $\tilde x_{k+1}$.

\paragraph{A simple mean-square perturbation bound.}
Assume the inner error satisfies $\E\|\tilde x_{k+1}-x_{k+1}\|^2\le \varepsilon_k^2$.
Then, by $\|a+b\|^2\le 2\|a\|^2+2\|b\|^2$,
\[
\E\|\tilde x_{k+1}-x^\star\|^2
\;\le\;
2\,\E\|x_{k+1}-x^\star\|^2 \;+\; 2\,\varepsilon_k^2.
\]

Therefore, the conclusions of Theorems~\ref{thm:main} and~\ref{thm:vary} persist, up to an additional additive term of order \(\varepsilon_k^2\) in the recursion. The resulting stationary mean-square error bound becomes
\[
O(\sigma^2/\alpha)+O(\sup_k \varepsilon_k^2).
\]
% Thus, preserving the \(O(1/\alpha)\) stationary order only requires the inner-solve contribution to satisfy \(\sup_k\varepsilon_k^2=O(1/\alpha)\). In the implementation and experiments below, we use the stronger tolerance scaling
% \[
% \varepsilon_k^2=O(1/\alpha_k^2),
% \]
% which makes the inexact-solve contribution lower order than the stochastic \(O(\sigma^2/\alpha_k)\) term and therefore negligible in the large-\(\alpha_k\) regime.
Thus, preserving the \(O(1/\alpha)\) stationary order only requires the inner-solve contribution to satisfy \(\sup_k\varepsilon_k^2=O(1/\alpha)\). In the residual-based implementation below, we obtain the stronger effective error scaling
\[
\varepsilon_k^2=O(1/\alpha_k^2),
\]
which makes the inexact-solve contribution lower order than the stochastic \(O(\sigma^2/\alpha_k)\) term and therefore negligible in the large-\(\alpha_k\) regime.

\paragraph{Implementable residual-based stopping.}
Define the fixed-point residual associated with the prox equation by
\[
g_k(u):=u-(c_k+\xi_k)+\lambda_k\nabla f(u),
\quad\text{so that}\quad
g_k(x_{k+1})=0.
\]
Since \(f\) is \(\mu\)-strongly convex, \(g_k\) is
\((1+\lambda_k\mu)\)-strongly monotone. Hence, for any \(u\),
\[
\|u-x_{k+1}\|
\le
\frac{\|g_k(u)\|}{1+\lambda_k\mu}.
\]
Consequently, if the inner solver stops at a point \(\tilde x_{k+1}\) such that
\[
\E\|g_k(\tilde x_{k+1})\|^2\le \delta_k^2,
\]
then
\[
\E\|\tilde x_{k+1}-x_{k+1}\|^2
\le
\frac{\delta_k^2}{(1+\lambda_k\mu)^2}.
\]
Using the same scaling argument as in Lemma~\ref{lem:scale}, uniformly over \(k\) since \(\gamma_k\) remains in a bounded interval, we have
\[
(1+\lambda_k\mu)^{-2}=O(\alpha_k^{-2}).
\]
Therefore, any bounded residual tolerance \(\delta_k=O(1)\) yields
\[
\varepsilon_k^2
:=
\E\|\tilde x_{k+1}-x_{k+1}\|^2
=
O(\alpha_k^{-2}).
\]
Thus, the residual-based criterion automatically satisfies the stronger tolerance scaling used above, and the inexact-solve contribution is negligible compared with the stochastic \(O(\sigma^2/\alpha_k)\) term in the stationary mean-square bound.

% ===================== Mini-batching =====================
% \subsection{Mini-batching and noise models}
% Assume an unbiased stochastic gradient $g(x,\xi)$ with a second-moment bound
% \[
% \E\|g(x,\xi)-\nabla f(x)\|^2 \le \sigma^2.
% \]
% For a mini-batch of size $b$ with i.i.d.\ samples,
% $\bar g_b(x)=\frac1b\sum_{i=1}^b g(x,\xi_i)$ remains unbiased and satisfies
% \[
% \E\|\bar g_b(x)-\nabla f(x)\|^2 \le \frac{\sigma^2}{b}.
% \]

% In the SDE-based view used in Assumption~\ref{ass:noise}, stochastic perturbations are represented by a diffusion covariance \(\Sigma\), so that over a step of length \(\alpha_k\) the injected perturbation has covariance \(\alpha_k\Sigma\). In the isotropic case we write \(\Sigma=\rho^2 I\), and the variance proxy used in the bounds is then \(\sigma^2=\tr(\Sigma)=n\rho^2\).

% Under the usual i.i.d.\ mini-batch variance reduction, increasing the batch size from \(1\) to \(b\) corresponds to
% \[
% \Sigma \mapsto \Sigma/b,
% \qquad\text{and therefore}\qquad
% \sigma^2=\tr(\Sigma) \mapsto \sigma^2/b.
% \]
% All bounds depending on the variance proxy inherit the same scaling.

% In particular, the steady-state mean-square error bound from Theorem~\ref{thm:main} becomes
% \[
% \limsup_{k\to\infty}\E\|x_k-x^\star\|^2 \;\lesssim\; \frac{C}{\alpha}\cdot\frac{\sigma^2}{b},
% \]
% highlighting a $1/b$ reduction of the stationary mean-square error bound under this proxy.

\subsection{Mini-batching and stochastic-oracle variants}

The analysis above is formulated for the diffusion/center-noise model:
after the implicit discretization, the stochastic perturbation appears as
an additive perturbation of the resolvent center,
\[
x_{k+1}
=
\prox_{\lambda_k f}(c_k+\xi_k),
\qquad
\E\|\xi_k\|^2
\le
\frac{\alpha_k}{(1+\tau_k)^2}\sigma^2.
\]
Within this model, any mechanism that reduces the effective covariance
of the center perturbation by a factor \(b\) immediately reduces the
variance proxy as
\[
\sigma^2 \mapsto \frac{\sigma^2}{b}.
\]
Consequently, the stationary mean-square bound from
Theorem~\ref{thm:main} scales formally as
\[
\limsup_{k\to\infty}\E\|x_k-x^\star\|^2
\;\lesssim\;
\frac{C}{\alpha}\cdot\frac{\sigma^2}{b}.
\]

This should be distinguished from the standard mini-batch stochastic
gradient model. If \(g(x,\xi)\) is an unbiased stochastic gradient and
\[
\E\|g(x,\xi)-\nabla f(x)\|^2\le \sigma^2,
\]
then the mini-batch estimator
\[
\bar g_b(x)=\frac1b\sum_{i=1}^b g(x,\xi_i)
\]
satisfies the usual variance reduction
\[
\E\|\bar g_b(x)-\nabla f(x)\|^2\le \frac{\sigma^2}{b}.
\]
However, in that case the randomness enters through the gradient or
through a stochastic mini-batch objective used inside the implicit
resolvent solve. This gives a related but distinct stochastic-oracle
method, not exactly the center-noise recursion analyzed in
Theorem~\ref{thm:main}.

Thus, the \(1/b\) scaling above should be read as a center-noise proxy:
if the effective perturbation of the resolvent center has covariance
reduced by a factor \(b\), then the same reduction appears in the
stationary bound. A complete theorem for fully stochastic mini-batch
inner solves would require additional assumptions on the stochastic
oracle, its dependence on the inner iterates, and the accuracy of the
inexact solve. We leave this extension to future work.

% ===================== Algorithms =====================
\section{Algorithms}\label{sec:algorithms}
% Algorithms~\ref{alg:ironfi} and~\ref{alg:inner-lm} provide, respectively, pseudocode for the \IRONfi{} outer step and for the LM/Newton inner solve used to compute the resolvent point
% \(
% x_{k+1}=\prox_{\lambda_k f}(c_k+\xi_k).
% \)

Algorithms~\ref{alg:ironfi} and~\ref{alg:inner-lm} provide,
respectively, pseudocode for the \IRONfi{} outer step under the diffusion/center-noise model and for the LM/Newton inner solve used to
compute the resolvent point
\[
x_{k+1}
=
\prox_{\lambda_k f}(c_k+\xi_k).
\]
Algorithm~\ref{alg:ironfi} should therefore be read as the algorithmic
form of the SDE discretization analyzed in the theory. In controlled
simulations, as in Step 5 in Algorithm~\ref{alg:ironfi}, \(\xi_k\) can be generated from the Gaussian diffusion model.
In stochastic-oracle implementations, such as mini-batch learning
variants, the randomness may instead enter through the gradients or
curvature information used inside the inner solve; this is a practical
variant of the resolvent viewpoint, but not the exact recursion covered
by the center-noise theorem.

\begin{algorithm}[ht!]
%\caption{\IRONfi: Implicit Resolvent Optimization under Noise (outer loop)}
\caption{\IRONfi: implicit resolvent step under the diffusion/center-noise model}
\label{alg:ironfi}
\begin{algorithmic}[1]
\STATE \textbf{Input:} $x_0\in\R^n$, $v_0\in\R^n$, $\gamma_0>0$, stepsizes $\{\alpha_k\}_{k\ge0}$, strong convexity parameter $\mu$, covariance factor $\Sigma^{1/2}$, with variance proxy $\sigma^2=\tr(\Sigma)$; in the isotropic case, $\Sigma=\rho^2 I$.
\FOR{$k=0,1,2,\dots$}
  \STATE Define $\displaystyle \tau_k=\frac{1}{\alpha_k}+\frac{\mu}{\gamma_k}$,\quad $\displaystyle \lambda_k=\frac{\alpha_k}{\gamma_k(1+\tau_k)}$.
  %\STATE Center $c_k=\frac{v_k+\tau_k x_k}{1+\tau_k}$.
  \STATE {\color{black} Compute the }center
\[
c_k=\frac{v_k+\tau_k x_k}{1+\tau_k}
=
x_k+\frac{v_k-x_k}{1+\tau_k}.
\]
  % \STATE Sample Brownian increment: $W_{k+1}-W_k=\sqrt{\alpha_k}\,\eta_k$, with $\eta_k\sim\mathcal N(0,I)$.
  % \STATE Center perturbation: $\displaystyle \xi_k=\frac{\Sigma^{1/2}\sqrt{\alpha_k}}{1+\tau_k}\,\eta_k$.
  \STATE Obtain a center perturbation \(\xi_k\) consistent with the diffusion model. For the Gaussian model analyzed in the theory, draw \(\eta_k\sim\mathcal N(0,I)\) and set
\[
\xi_k
=
\frac{\Sigma^{1/2}\sqrt{\alpha_k}}{1+\tau_k}\,\eta_k .
\]

  \STATE Compute $x_{k+1}$ by approximately solving the resolvent (proximal) subproblem
  \[
     x_{k+1} \approx \prox_{\lambda_k f}(c_k+\xi_k)
     \quad\text{via Alg.~\ref{alg:inner-lm} (LM/Newton).}
  \]
  \STATE Update velocity (implicit Euler coupling):
  \[
     v_{k+1}=x_{k+1}+\frac{x_{k+1}-x_k}{\alpha_k}.
  \]
  \STATE Update damping: $\displaystyle \gamma_{k+1}=\frac{\gamma_k+\alpha_k\mu}{1+\alpha_k}$.
\ENDFOR
\end{algorithmic}
\end{algorithm}

\begin{algorithm}[ht!]
\caption{LM/Newton inner solve for $x=\prox_{\lambda f}(c)$ (one outer iteration)}
\label{alg:inner-lm}
\begin{algorithmic}[1]
\STATE \textbf{Input:} center $c=c_k+\xi_k$, parameter $\lambda=\lambda_k$, function $f$ with gradient $\nabla f$ and Hessian (or GN approximation) $H(x)$; residual tolerance $\delta_k>0$; max iters $N_{\max}$.
% \STATE Define fixed-point residual $g(u)=u-c+\lambda\nabla f(u)$; Jacobian $J(u)=I+\lambda H(u)$.
%\STATE Define the fixed-point residual
%\[
%g(u)=u-c+\lambda\nabla f(u).
%\]

\STATE Initialize \(u^{(0)}\leftarrow x_k\) if a warm start is available; otherwise set \(u^{(0)}\leftarrow c\).
\FOR{$i=0,1,\dots,N_{\max}$}
  \STATE Compute the fixed-point residual $g_i=g(u^{(i)}) = u^{(i)}-c+\lambda\nabla f(u^{(i)})$; if $\|g_i\|\le \delta_k$ \textbf{return} $x_{k+1}=u^{(i)}$.
  % \STATE (LM/Newton step) Solve $(I+\lambda H(u^{(i)}))\,s^{(i)}=-g_i$ \textit{(direct, or matrix-free CG/MINRES using Hessian--vector products)}.
  \STATE {\color{black} Form the Jacobian/LM matrix}
\[
J_i=I+\lambda H(u^{(i)}).
\]
  \STATE (LM/Newton step) Solve
\[
J_i s^{(i)}=-g_i,
\]
\textit{directly, or matrix-free by CG/MINRES using Hessian--vector products}.

  \STATE (Optional damping / trust region) If $\|g(u^{(i)}+s^{(i)})\|>\|g_i\|$, shrink $s^{(i)}\leftarrow\beta s^{(i)}$ with $\beta\in(0,1)$ until decrease holds.
  \STATE Update $u^{(i+1)}\leftarrow u^{(i)}+s^{(i)}$.
\ENDFOR
\STATE \textbf{Return} $x_{k+1}=u^{(N_{\max})}$.
\end{algorithmic}
\end{algorithm}

\begin{remark}[Algorithmic considerations: inner solves, scalability, and practical variants.]

The fully implicit outer step of \IRONfi{} reduces each iteration to computing a resolvent point
\(
x_{k+1}=\prox_{\lambda_k f}(c_k+\xi_k),
\)
which is the unique root of the fixed-point residual
\[
g_k(u)\;:=\;u-(c_k+\xi_k)+\lambda_k\nabla f(u),
\quad\text{so that}\quad g_k(x_{k+1})=0.
\]
In practice, we compute an approximation $\tilde x_{k+1}$ by running a few LM/Newton iterations, each requiring the solution of a linear system
\[
\big(I+\lambda_k H(u)\big)s = -g_k(u),
\]
where $H(u)$ is either the true Hessian $\nabla^2 f(u)$ (when available) or a Gauss--Newton / Fisher-type curvature approximation. To scale to large problems, these inner systems can be solved \emph{matrix-free} using Krylov methods (CG for SPD models, MINRES/GMRES otherwise), driven by Hessian--vector products; this makes the per-outer-iteration cost essentially proportional to the number of inner Krylov iterations times the cost of an $H(u)$--vector product. Warm-starts are natural and effective: we initialize the inner loop at $u^{(0)}=x_k$ (or reuse the last inner iterate), which substantially reduces inner work when the outer iterates move smoothly.

A key advantage of the resolvent formulation is that it provides an implementable and theory-aligned stopping rule. Since $f$ is $\mu$-strongly convex, $g_k$ is $(1+\lambda_k\mu)$-strongly monotone, hence
\[
\|\tilde x_{k+1}-x_{k+1}\|
\;\le\;
\frac{\|g_k(\tilde x_{k+1})\|}{1+\lambda_k\mu}.
\]
Therefore, controlling the inner residual directly controls the distance to the exact resolvent point. Combined with the scaling $(1+\lambda_k\mu)^{-2}=O(\alpha_k^{-2})$ (uniformly over $k$ under bounded $\gamma_k$), this implies that even a moderate residual tolerance can yield an inner error that is negligible compared to the \(O(1/\alpha_k)\) stochastic contribution in the stationary mean-square bound. Concretely, if the inner solver returns $\tilde x_{k+1}$ such that $\E\|g_k(\tilde x_{k+1})\|^2\le \delta_k^2$ with $\delta_k=O(1)$, then
\(
\E\|\tilde x_{k+1}-x_{k+1}\|^2 = O(\alpha_k^{-2}),
\)
which is dominated by the \(O(1/\alpha_k)\) stochastic contribution controlled by the theory. This observation supports a practical regime in which inner solves are intentionally \emph{inexact} yet do not compromise the \(O(1/\alpha_k)\) stationary mean-square scaling.

Finally, several low-cost variants fit naturally within the same framework. One may replace $H(u)$ by a diagonal or block-diagonal surrogate (yielding a cheap LM step), or use a quasi-Newton approximation (e.g., limited-memory updates) inside the same residual-based stopping rule. Another pragmatic option is to perform only one (or a small, fixed number of) LM/Newton steps per outer iteration, which can be viewed as an inexact resolvent evaluation; the above residual-to-error bound then provides a direct diagnostic for when such truncated inner solves remain sufficient.

\end{remark}

\color{black}
\section{Numerical experiments}\label{sec:experiments}

We now test the main mechanism predicted by the theory: for the fully implicit resolvent scheme \IRONfi{}, increasing the implicit stepsize strengthens the resolvent contraction and reduces the stationary spread induced by noise. The experiments focus on the phenomena predicted by the analysis: stationary mean-square scaling, the role of the inexact inner solve, and the practical cost of computing the resolvent step.

\paragraph{Toy particle simulations (Sections~\ref{subsec:SCVX}--\ref{subsec:logcosh}).}
We first study two low-dimensional problems in \(\R^3\): a strongly convex quadratic with known minimizer \(x^\star\), and a nonconvex log-cosh regression example.
For the quadratic problem, the theoretical quantities can be measured directly. We therefore report the ensemble mean-square error
\[
\widehat{\mathrm{MSE}}_k
:=
\frac1N\sum_{j=1}^N\|x_k^{(j)}-x^\star\|^2
\]
and its bias--variance decomposition
\[
\widehat{\mathrm{MSE}}_k
=
\|\bar x_k-x^\star\|^2
+
\tr\!\bigl(\widehat{\mathrm{Cov}}(x_k)\bigr).
\]
Here \(\widehat{\mathrm{Cov}}(x_k)\) denotes the empirical covariance with normalization \(1/N\),
\[
\widehat{\mathrm{Cov}}(x_k)
:=
\frac1N\sum_{j=1}^N
\bigl(x_k^{(j)}-\bar x_k\bigr)
\bigl(x_k^{(j)}-\bar x_k\bigr)^\top,
\]
so that the displayed bias--variance decomposition is exact.

We also compare the empirical stationary value of \(\alpha\,\widehat{\mathrm{MSE}}_\infty\) with the exact stationary covariance predicted by the discrete Lyapunov equation in the fixed-\(\gamma\) quadratic recursion.
For the nonconvex log-cosh example, the goal is only qualitative: we visualize late-time particle clouds to illustrate how the stationary spread changes with \(\alpha\) around the critical regions reached by the dynamics.

\paragraph{Learning benchmarks (Sections~\ref{sec:exp-logreg}--\ref{sec:exp-mnist}).}
We then move to learning problems.
First, we study synthetic \(\ell_2\)-regularized logistic regression, a strongly convex benchmark where a high-accuracy reference minimizer is available and stationary quantities can be estimated over multiple random seeds.
This experiment tests the \(O(1/\alpha)\) stationary-MSE scaling, the robustness of this scaling to fixed inner residual tolerances, and the practical inner-iteration cost.
Second, we consider MNIST softmax regression and compare \IRONfi{} with AdamW and NAG-GS under a validation-based tuning protocol.
In that benchmark, the emphasis is on accuracy, stability, and wall-clock cost, including the overhead of the inexact Newton--CG inner solve.
\color{black}

\color{black}
\subsection{Strongly convex quadratic}\label{subsec:SCVX}

We take
\[
f(x)=\frac12 x^\top A x-b^\top x,
\qquad
A=Q^\top \operatorname{diag}(1,1,3)Q,
\qquad
b=c\,\mathbf 1,
\]
where \(Q\) is orthogonal. The minimizer is \(x^\star=A^{-1}b\).
For this experiment we use the fixed-\(\gamma\) setting, so that the quadratic recursion is linear after shifting by \(x^\star\). This allows us to compare Monte Carlo particle simulations with the exact stationary covariance obtained from the discrete Lyapunov equation in Section~\ref{sec:quad-lyapunov}.

Each \IRONfi{} step is explicit in closed form:
\[
x_{k+1}
=
(I+\lambda A)^{-1}(c_k+\xi_k),
\qquad
\lambda=\frac{\alpha}{\gamma(1+\tau)},
\]
with
\[
c_k=\frac{v_k+\tau x_k}{1+\tau},
\qquad
\tau=\frac1\alpha+\frac{\mu}{\gamma}.
\]
We run \(N=2\times 10^5\) independent particles and vary the implicit stepsize parameter over the four values shown in the figures.

\paragraph{Mean-square error and bias--variance decomposition.}
At each iteration we compute
\[
\widehat{\mathrm{MSE}}_k
=
\frac1N\sum_{j=1}^N
\|x_k^{(j)}-x^\star\|^2.
\]
We also decompose this quantity as
\[
\widehat{\mathrm{MSE}}_k
=
\|\bar x_k-x^\star\|^2
+
\tr\!\bigl(\widehat{\mathrm{Cov}}(x_k)\bigr),
\qquad
\bar x_k:=\frac1N\sum_{j=1}^N x_k^{(j)}.
\]
This is the quantity appearing in the mean-square theory: the ensemble mean alone may be small even if the stationary cloud remains spread out.

\begin{figure}[ht!]
  \centering

  \begin{subfigure}[t]{0.48\linewidth}
    \includegraphics[width=\linewidth]{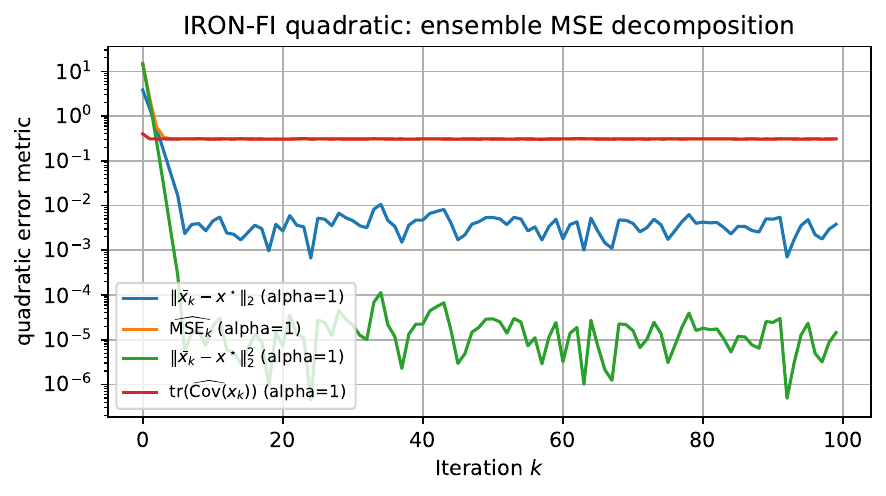}
    \caption{\(\alpha=1\)}
  \end{subfigure}\hfill
  \begin{subfigure}[t]{0.48\linewidth}
    \includegraphics[width=\linewidth]{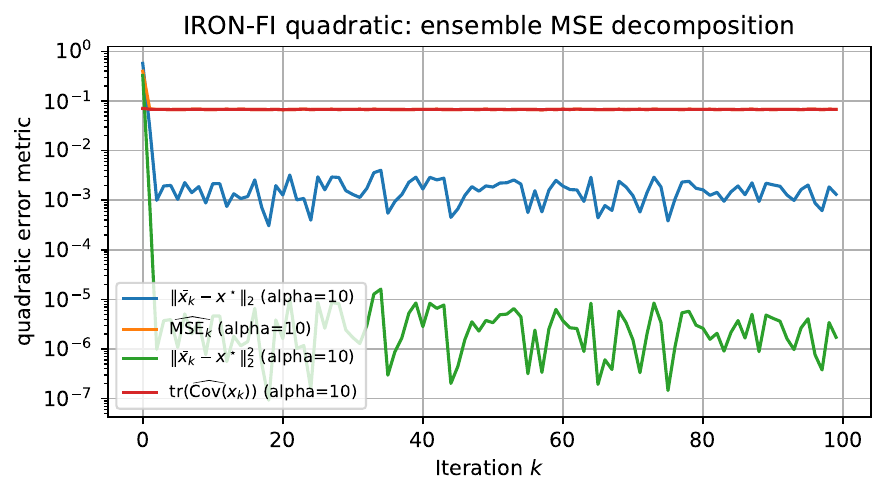}
    \caption{\(\alpha=10\)}
  \end{subfigure}

  \par\smallskip

  \begin{subfigure}[t]{0.48\linewidth}
    \includegraphics[width=\linewidth]{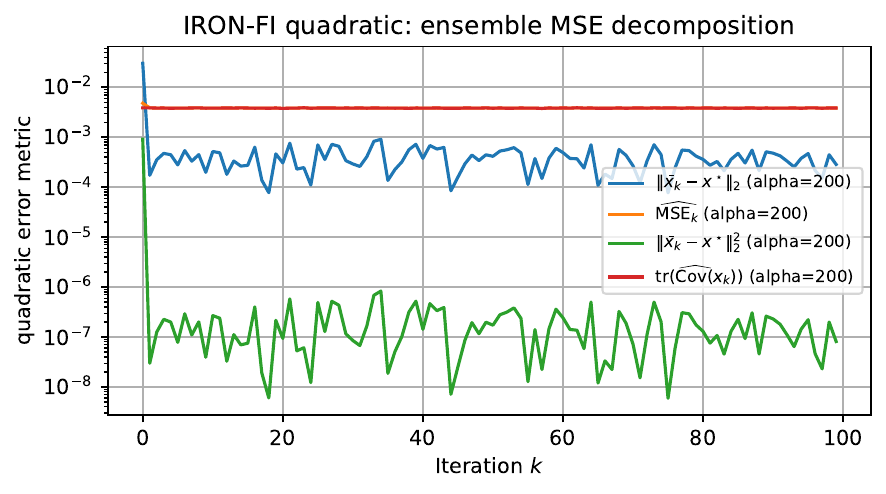}
    \caption{\(\alpha=200\)}
  \end{subfigure}\hfill
  \begin{subfigure}[t]{0.48\linewidth}
    \includegraphics[width=\linewidth]{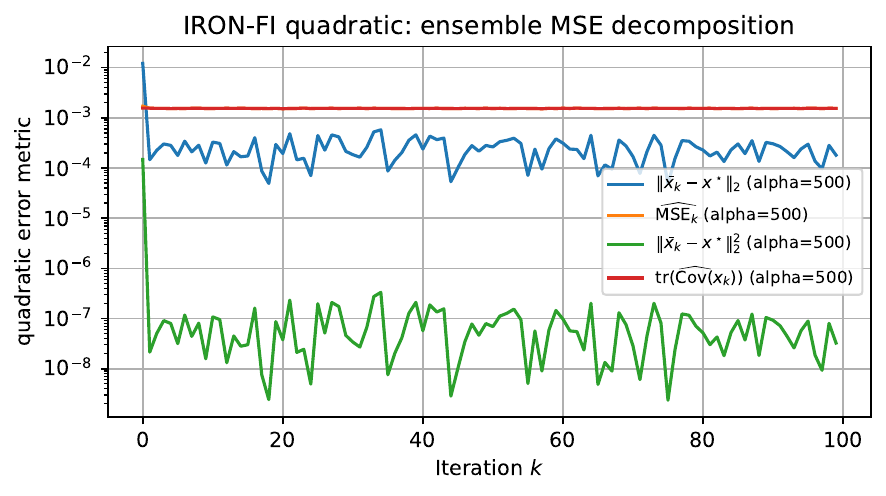}
    \caption{\(\alpha=500\)}
  \end{subfigure}

  \caption{Quadratic test: evolution of the ensemble mean-square error and its bias--variance decomposition. The plotted quantities are \(\widehat{\mathrm{MSE}}_k\), the squared bias \(\|\bar x_k-x^\star\|^2\), and the covariance trace \(\tr(\widehat{\mathrm{Cov}}(x_k))\). As the implicit stepsize increases, both the transient and the late-time mean-square level decrease.}
  \label{fig:quad-mse-decomp}
\end{figure}

Figure~\ref{fig:quad-mse-decomp} shows that increasing \(\alpha\) reduces the late-time mean-square level. The decomposition also shows that the covariance trace is the relevant stationary quantity once the mean has approached \(x^\star\). This directly matches the mean-square quantities controlled in Theorem~\ref{thm:main} and the sharper quadratic analysis of Proposition~\ref{prop:quad-explicit}.

\paragraph{Exact stationary covariance and asymptotic constant.}
For the fixed-\(\gamma\) quadratic recursion, the shifted state
\[
z_k=
\begin{pmatrix}
e_k\\ w_k
\end{pmatrix},
\qquad
e_k=x_k-x^\star,
\qquad
w_k=v_k-x^\star,
\]
satisfies a linear stochastic recursion
\[
z_{k+1}=Mz_k+G\xi_k.
\]
The stationary covariance \(P=\E[z_kz_k^\top]\) solves
\[
P=MPM^\top+Q,
\]
and the exact stationary position covariance is the top-left block \(P_{xx}\). Thus
\[
\mathrm{MSE}^{\rm exact}_\infty
=
\tr(P_{xx}).
\]
In the isotropic case \(\Sigma=\rho^2 I\), Proposition~\ref{prop:quad-explicit} predicts
\[
\alpha\,\E\|x_\infty-x^\star\|^2
\longrightarrow
C_{\rm quad}
:=
\gamma^2\rho^2\tr(A^{-2})
=
\frac{\gamma^2}{n}\sigma^2\tr(A^{-2}),
\]
where \(\sigma^2=\tr(\Sigma)=n\rho^2\).

\begin{figure}[ht!]
  \centering
  \includegraphics[width=0.78\linewidth]{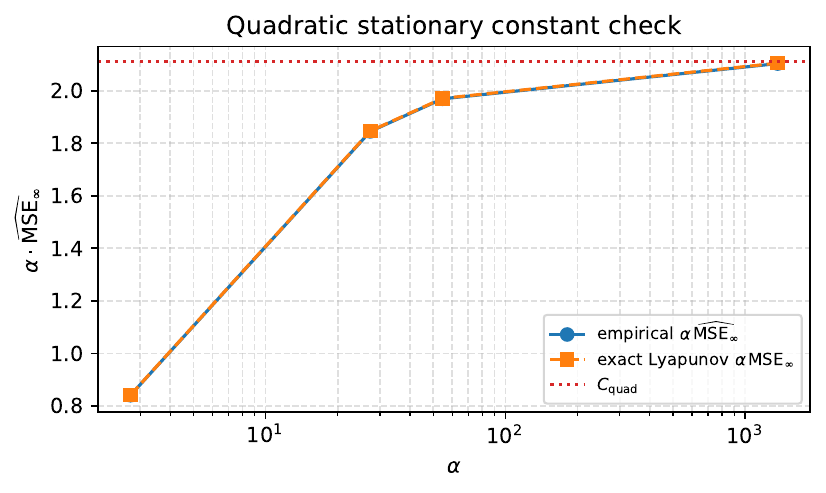}
  \caption{Quadratic test: scaled stationary mean-square error. The empirical curve is the late-time Monte Carlo estimate of \(\alpha\,\widehat{\mathrm{MSE}}_\infty\). The exact Lyapunov curve is \(\alpha\,\tr(P_{xx})\), where \(P\) solves the discrete Lyapunov equation for the fixed-\(\gamma\) linear quadratic recursion. The horizontal line is the asymptotic constant \(C_{\rm quad}\). The agreement is consistent with the predicted \(1/\alpha\) stationary scaling and the sharper quadratic constant.}
  \label{fig:quad-scaled-mse}
\end{figure}

Figure~\ref{fig:quad-scaled-mse} gives a direct theory--experiment comparison. The empirical stationary estimate is obtained by averaging \(\widehat{\mathrm{MSE}}_k\) over a late-time window after discarding the initial transient part of the trajectory and then plotting \(\alpha\,\widehat{\mathrm{MSE}}_\infty\). The exact curve is computed independently from the discrete Lyapunov equation of the linear recursion. The convergence of both curves toward \(C_{\rm quad}\) illustrates the asymptotic statement of Proposition~\ref{prop:quad-explicit}.

\paragraph{Stationary clouds.}
Finally, we visualize the particle clouds in the three coordinate planes. These plots are qualitative, but they make the reduction of the stationary spread around \(x^\star\) clearly visible.

\begin{figure}[ht!]
  \centering

  \begin{subfigure}[t]{\linewidth}
    \includegraphics[width=\linewidth]{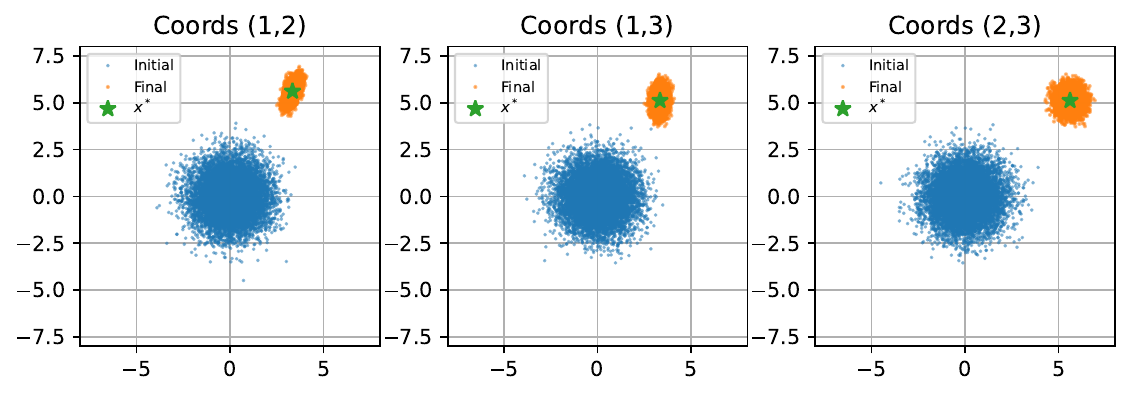}
    \caption{\(\alpha=1\)}
  \end{subfigure}

  \begin{subfigure}[t]{\linewidth}
    \includegraphics[width=\linewidth]{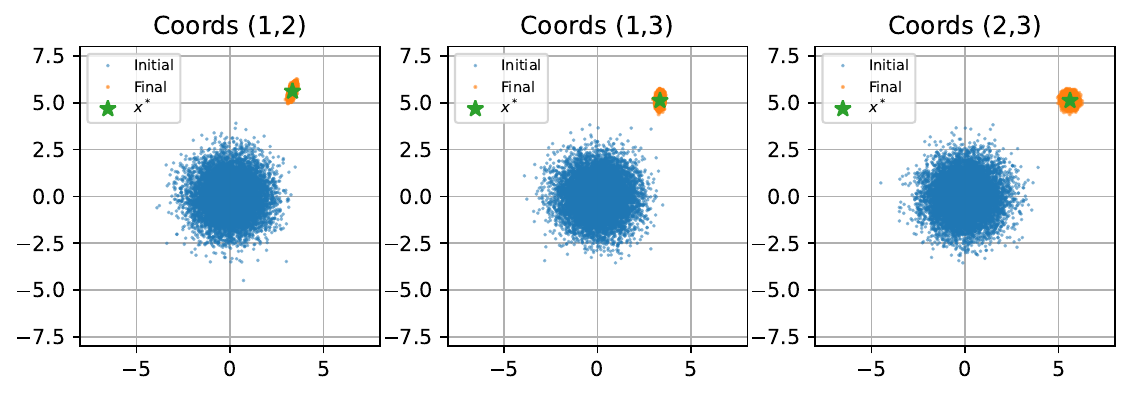}
    \caption{\(\alpha=10\)}
  \end{subfigure}

  \begin{subfigure}[t]{\linewidth}
    \includegraphics[width=\linewidth]{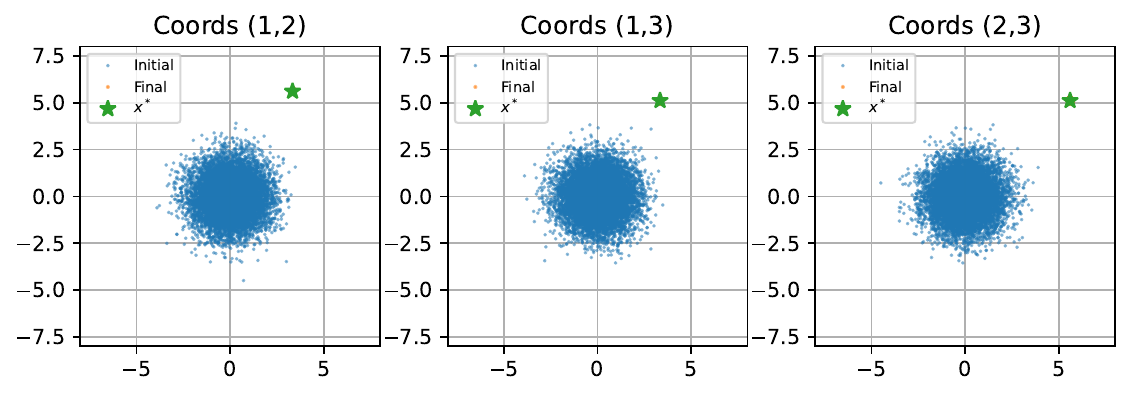}
    \caption{\(\alpha=200\)}
  \end{subfigure}

  \begin{subfigure}[t]{\linewidth}
    \includegraphics[width=\linewidth]{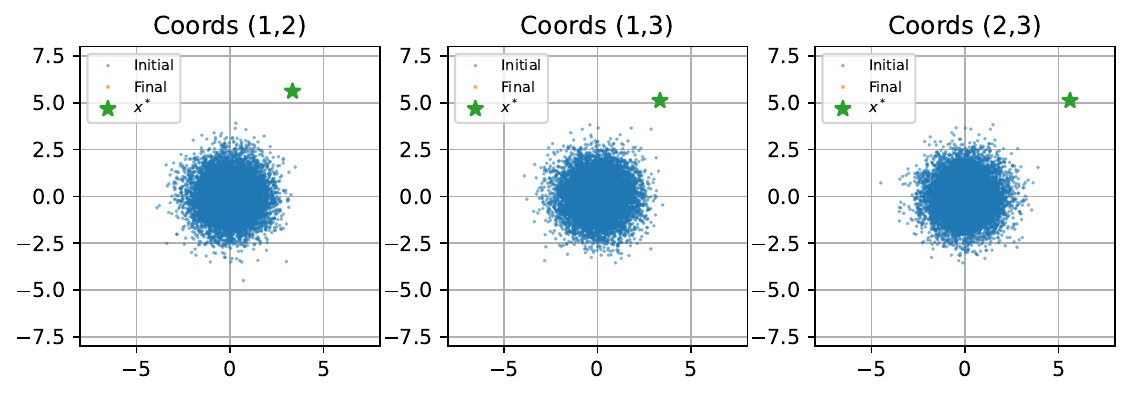}
    \caption{\(\alpha=500\)}
  \end{subfigure}

  \caption{Quadratic test: projected point clouds (blue = initial, orange = final, star = minimizer) for four implicit stepsizes. Each subfigure shows the coordinate planes \((x_1,x_2)\), \((x_1,x_3)\), and \((x_2,x_3)\). As \(\alpha\) grows, the stationary cloud concentrates around \(x^\star\), visually reflecting the \(O(1/\alpha)\) stationary mean-square error bound.}
  \label{fig:quad-clouds-2x2}
\end{figure}

\noindent\textbf{Observation.}
% The quadratic experiment now verifies both levels of the theory. First, the ensemble mean-square error decreases with \(\alpha\), and its late-time value is dominated by the covariance trace once the bias has decayed. Second, the scaled stationary quantity \(\alpha\,\widehat{\mathrm{MSE}}_\infty\) matches the exact fixed-\(\gamma\) Lyapunov prediction and approaches the explicit constant \(C_{\rm quad}\). The point-cloud plots provide a complementary visual confirmation: for large \(\alpha\), the stationary spread around \(x^\star\) becomes barely visible.
The quadratic experiment illustrates both levels of the theory. First, the ensemble mean-square error decreases with \(\alpha\), and its late-time value is dominated by the covariance trace once the bias has decayed. Second, the scaled stationary quantity \(\alpha\,\widehat{\mathrm{MSE}}_\infty\) matches the exact fixed-\(\gamma\) Lyapunov prediction and approaches the explicit constant \(C_{\rm quad}\). The point-cloud plots provide a complementary visual illustration: for large \(\alpha\), the stationary spread around \(x^\star\) becomes barely visible.

\subsection{Nonconvex log-cosh regression}\label{subsec:logcosh}

We next consider a mildly nonconvex example to illustrate whether the same variance-shrinking behavior can be observed locally beyond the strongly convex setting. This experiment is qualitative: the theory developed in Section~\ref{sec:constant} assumes global strong convexity, so the plots below should be interpreted as visual evidence of a local phenomenon rather than as a theorem-level validation.

We consider
\[
f(x)
=
\frac12\|A u(x)-b\|^2,
\qquad
u_i(x)=\log\cosh(x_i),
\]
with gradient
\[
\nabla f(x)
=
\bigl(Q u(x)-c\bigr)\odot \tanh(x),
\qquad
Q=A^\top A,
\qquad
c=A^\top b.
\]
The fully implicit update is
\[
x_{k+1}=c_k-\lambda\nabla f(x_{k+1}),
\]
and the inner nonlinear system is solved by LM/Newton iterations with Jacobian
\[
I+\lambda \nabla^2 f(x_{k+1}).
\]
We use the same values of \(\alpha\) as in the quadratic experiment and initialize the particles in a region from which the late-time dynamics remain near the relevant stationary set.

\begin{figure}[ht!]
  \centering

  \begin{subfigure}[t]{\linewidth}
    \includegraphics[width=\linewidth]{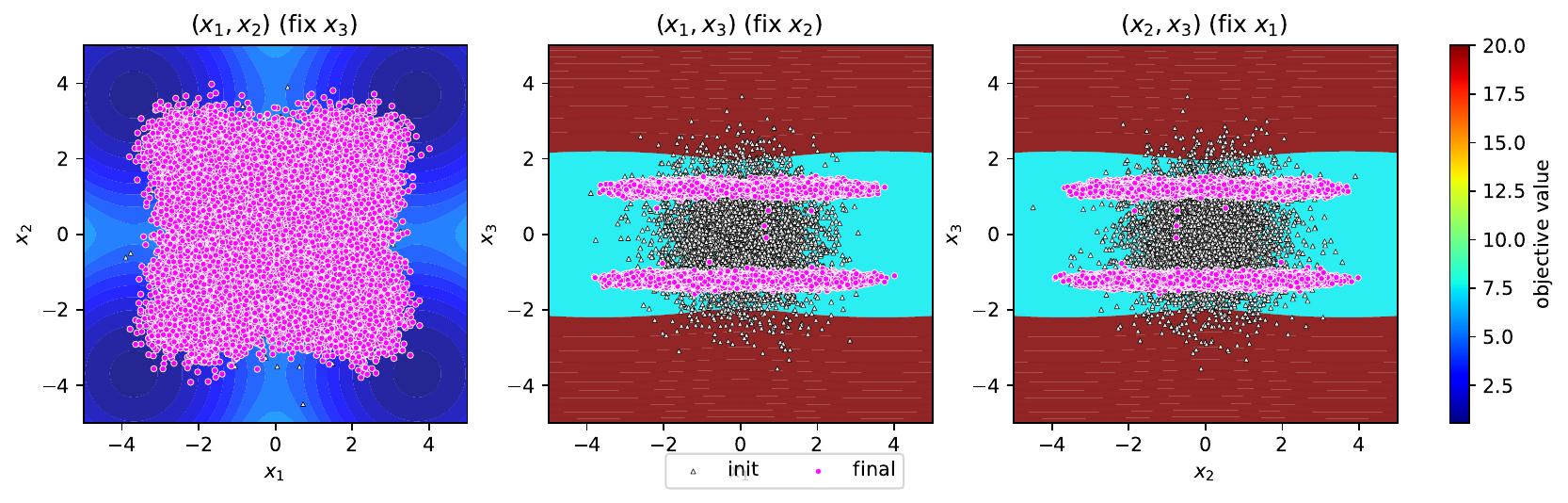}
    \caption{\(\alpha=1\)}
  \end{subfigure}

  \begin{subfigure}[t]{\linewidth}
    \includegraphics[width=\linewidth]{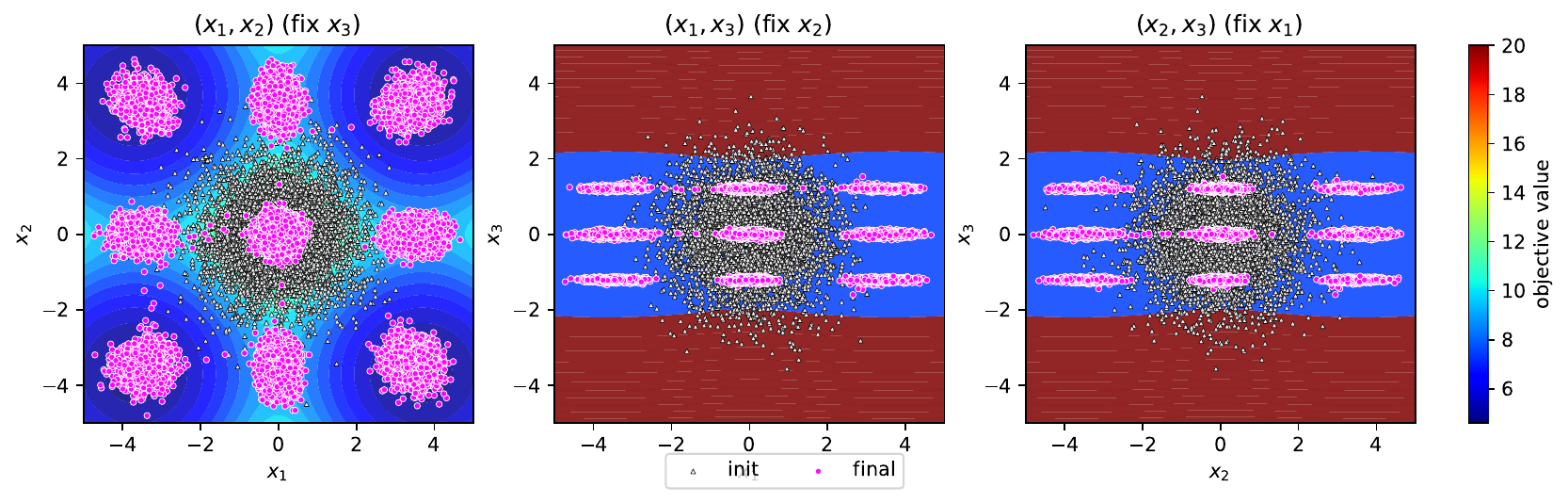}
    \caption{\(\alpha=10\)}
  \end{subfigure}

  \begin{subfigure}[t]{\linewidth}
    \includegraphics[width=\linewidth]{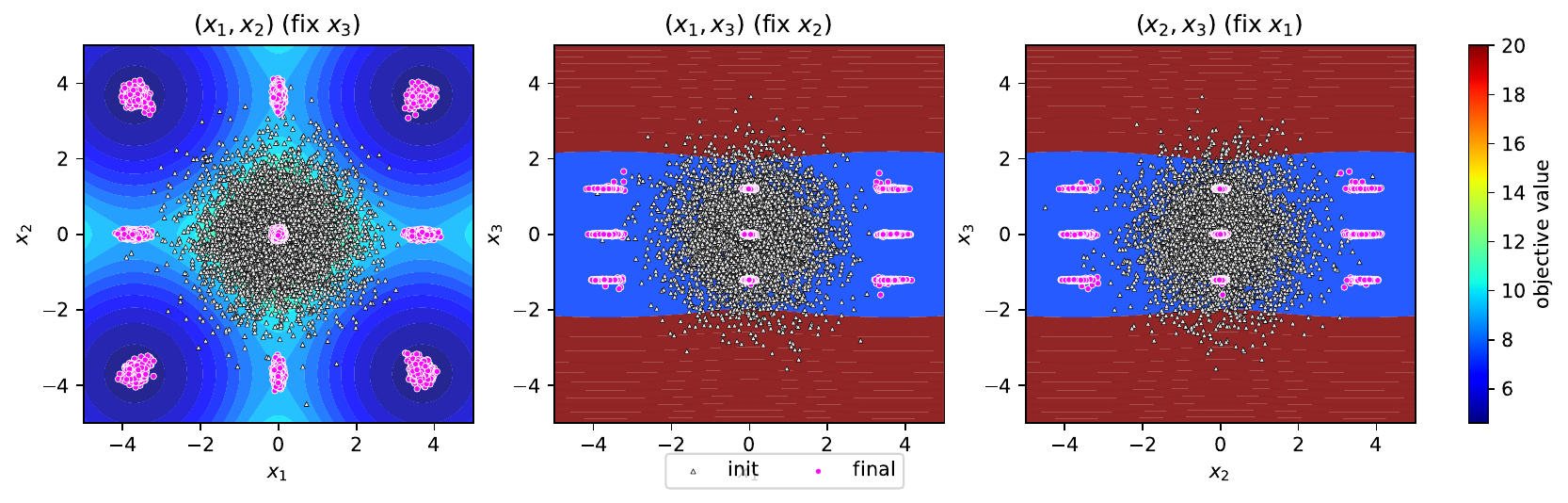}
    \caption{\(\alpha=200\)}
  \end{subfigure}

  \begin{subfigure}[t]{\linewidth}
    \includegraphics[width=\linewidth]{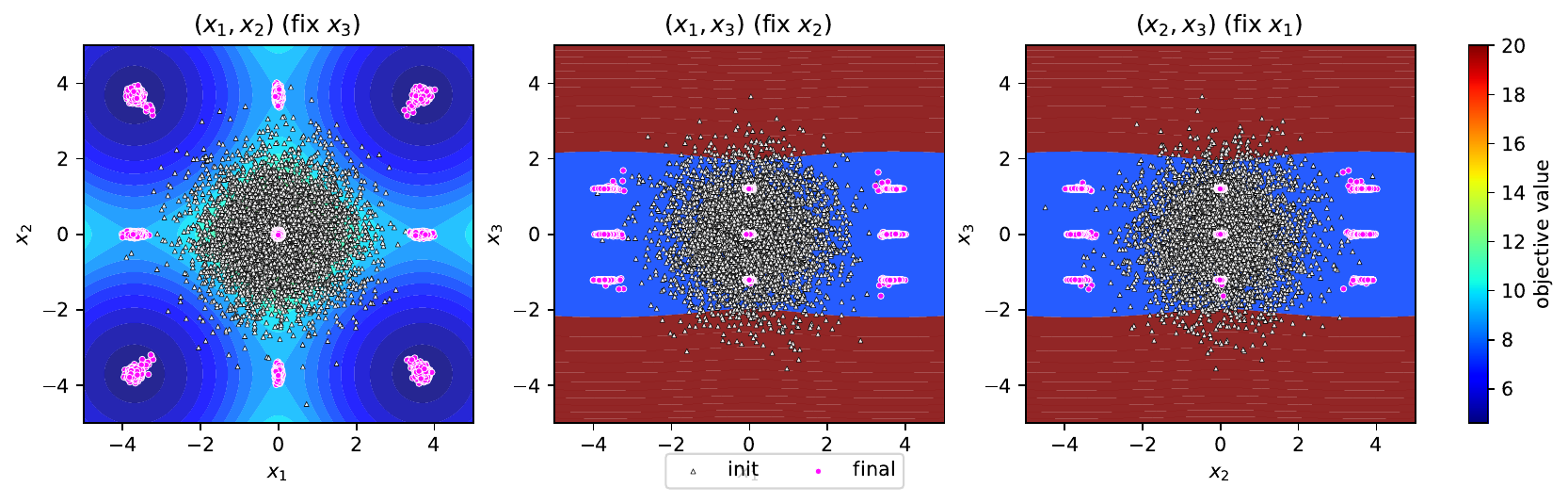}
    \caption{\(\alpha=500\)}
  \end{subfigure}

  \caption{Nonconvex log-cosh regression: projected particle clouds for four implicit stepsizes. Each subfigure shows the coordinate planes \((x_1,x_2)\), \((x_1,x_3)\), and \((x_2,x_3)\). The plotted clouds are obtained from late-time iterates, after discarding the initial transient phase. As \(\alpha\) increases, the clouds become visibly tighter around the stationary regions reached by the dynamics. This is a qualitative, beyond-theory illustration rather than a consequence of Theorem~\ref{thm:main}.}
  \label{fig:logcosh-clouds}
\end{figure}

\noindent\textbf{Observation.}
In this nonconvex example, the particles concentrate near critical regions of the log-cosh objective, including neighborhoods of global minimizers and, for some initializations, non-optimal critical regions. Increasing \(\alpha\) visibly reduces the late-time spread of the clouds. This suggests that the variance-shrinking effect induced by the implicit resolvent may persist locally beyond the globally strongly convex regime, at least around the regions reached by the dynamics. A corresponding nonconvex theory is left for future work.
% \color{black}

\subsection{Strongly convex benchmark: synthetic regularized logistic regression}
\label{sec:exp-logreg}

We next test the stationary mean-square predictions in a controlled strongly convex learning problem. The goal of this experiment is not to compare against other optimizers, but to isolate the mechanism predicted by the theory: the resolvent contraction should make the stationary mean-square error decrease like \(1/\alpha\), and this behavior should persist when the inner nonlinear solves are performed only approximately.

\paragraph{Problem.}
We consider ridge-regularized binary logistic regression,
\[
f(w)
:=
\frac1n\sum_{i=1}^n
\log\!\left(1+\exp(-y_i a_i^\top w)\right)
+
\frac{\lambda_{\rm reg}}{2}\|w\|^2,
\]
where \(a_i\in\R^d\), \(y_i\in\{-1,+1\}\), and \(\lambda_{\rm reg}>0\). The regularization makes \(f\) \(\mu\)-strongly convex with \(\mu=\lambda_{\rm reg}\). We generate synthetic Gaussian features and logistic labels, and compute a high-accuracy reference minimizer \(w^\star\) once using a deterministic full-batch solver.

\paragraph{Noise model and resolvent step.}
To match the mechanism derived in Section~\ref{sec:scheme}, we do not use mini-batch gradients in this synthetic experiment. Instead, stochasticity is injected directly as a Gaussian perturbation of the resolvent center. At iteration \(k\), we form
\[
c_k
=
\frac{v_k+\tau x_k}{1+\tau},
\qquad
\tau=\frac1\alpha+\frac{\mu}{\gamma},
\]
sample \(\eta_k\sim\mathcal N(0,I_d)\), and set, in the isotropic case,
\[
\xi_k
=
\frac{\sqrt{\alpha}}{1+\tau}\,\rho\,\eta_k.
\]
Thus
\[
\E[\xi_k]=0,
\qquad
\E\|\xi_k\|^2
=
\frac{\alpha}{(1+\tau)^2}\rho^2 d.
\]
The next iterate is computed by approximately solving
\[
w_{k+1}
\approx
\prox_{\lambda^{\rm prox} f}(c_k+\xi_k),
\qquad
\lambda^{\rm prox}
=
\frac{\alpha}{\gamma(1+\tau)}.
\]
The inner solve is performed by LM/Newton iterations applied to the fixed-point residual
\[
g_k(u)
:=
u-(c_k+\xi_k)+\lambda^{\rm prox}\nabla f(u).
\]
The inner loop stops when
\[
\|g_k(u)\|\le \varepsilon,
\]
or when the maximum number of inner iterations is reached.

\paragraph{Stationary metrics and reproducibility.}
After discarding the initial transient part of the trajectory, we estimate the stationary mean-square error by averaging over a late-time window:
\[
\widehat{\mathrm{MSE}}_\infty(\alpha)
:=
\frac1T\sum_{k=K_0}^{K_0+T-1}
\|w_k-w^\star\|^2.
\]
We repeat the experiment over the seeds \(0,1,2,3,4\). For each pair \((\alpha,\varepsilon)\), we report the mean stationary MSE across seeds, together with uncertainty bands. The script also records the full run configuration: dataset size \(n\), dimension \(d\), regularization parameter \(\lambda_{\rm reg}\), noise level \(\rho\), total number of iterations, stationary averaging window, tolerance grid, stepsize grid, maximum number of inner iterations, and the random seeds.

\paragraph{Slope estimation.}
To test the predicted law
\[
\mathrm{MSE}_\infty(\alpha)=O(1/\alpha),
\]
we fit the slope of
\[
\log \widehat{\mathrm{MSE}}_\infty(\alpha)
\quad\text{versus}\quad
\log \alpha
\]
over the large-\(\alpha\) range. The fit is performed separately for each seed and then aggregated, giving a mean slope and a \(95\%\) confidence interval. A slope close to \(-1\) is the expected signature of the \(1/\alpha\) stationary regime.

\begin{figure}[ht!]
  \centering

  \begin{subfigure}[t]{0.48\linewidth}
    \includegraphics[width=\linewidth]{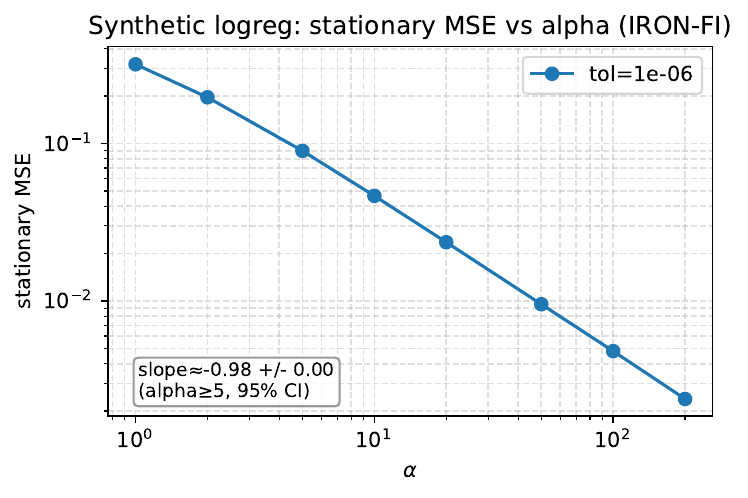}
    \caption{Stationary MSE versus \(\alpha\).}
    \label{fig:synth-logreg-mse-alpha}
  \end{subfigure}\hfill
  \begin{subfigure}[t]{0.48\linewidth}
    \includegraphics[width=\linewidth]{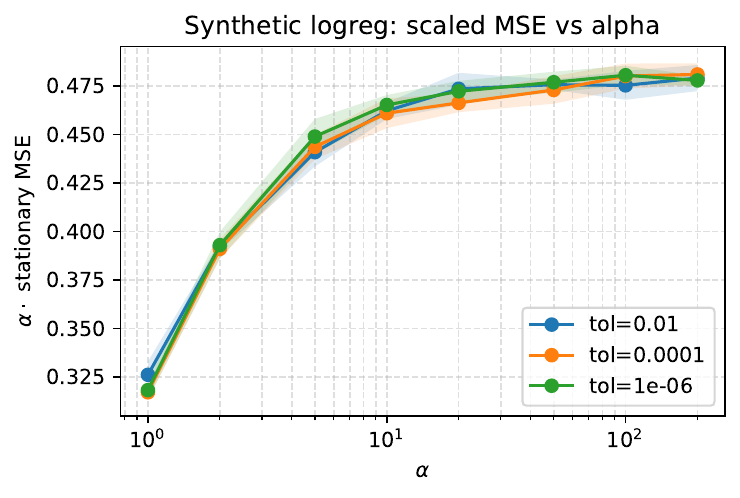}
    \caption{Scaled diagnostic \(\alpha\,\widehat{\mathrm{MSE}}_\infty\).}
    \label{fig:synth-logreg-scaled}
  \end{subfigure}

  \par\smallskip

  \begin{subfigure}[t]{0.48\linewidth}
    \includegraphics[width=\linewidth]{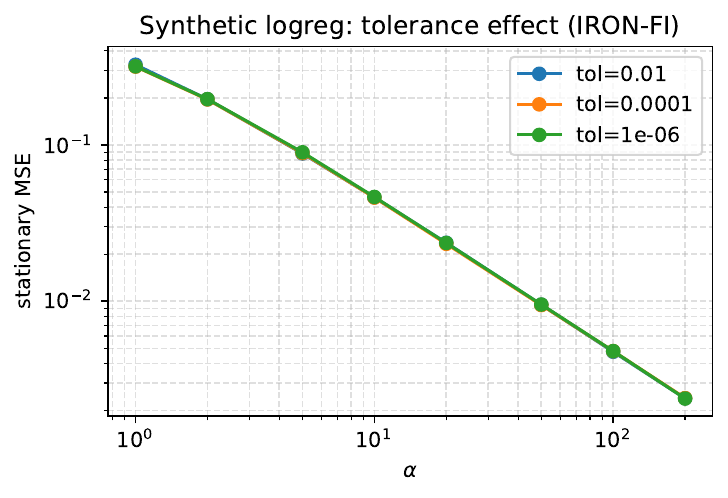}
    \caption{Effect of the inner tolerance.}
    \label{fig:synth-logreg-tol}
  \end{subfigure}\hfill
  \begin{subfigure}[t]{0.48\linewidth}
    \includegraphics[width=\linewidth]{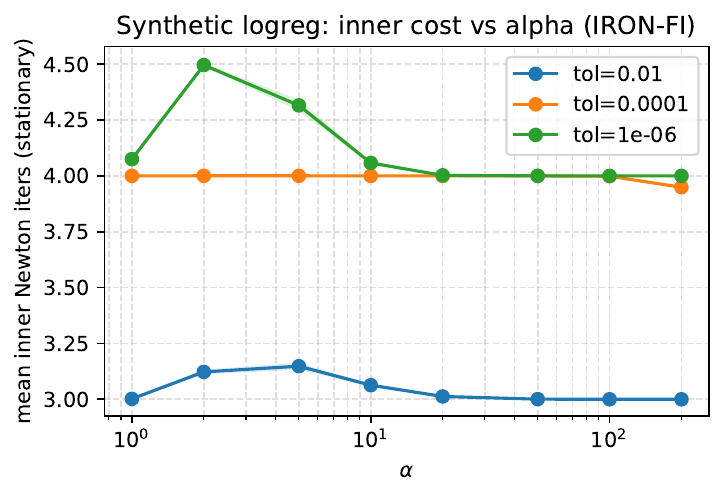}
    \caption{Mean inner LM/Newton iterations.}
    \label{fig:synth-logreg-inner}
  \end{subfigure}

  \caption{Synthetic ridge-logistic regression. Panel~(a) shows the stationary mean-square error as a function of \(\alpha\), with the fitted log--log slope reported in the plot. 
  %Panel~(b) plots \(\alpha\,\widehat{\mathrm{MSE}}_\infty\): a nearly flat curve indicates the \(1/\alpha\) regime, while deviations reveal the onset of tolerance-induced error. 
  Panel~(b) plots the scaled quantity \(\alpha\,\widehat{\mathrm{MSE}}_\infty\). 
In the ideal \(1/\alpha\) stationary regime, this quantity should approach a constant. 
Here it increases for small and moderate \(\alpha\), then levels off for larger \(\alpha\), revealing an empirical asymptotic coefficient analogous to the explicit quadratic constant \(C_{\rm quad}\), but without a closed-form expression in the present nonlinear setting.
  Panel~(c) compares several fixed residual tolerances \(\varepsilon\). Panel~(d) reports the mean number of inner LM/Newton iterations at stationarity. Curves are aggregated over five random seeds.}
  \label{fig:synth-logreg}
\end{figure}

\paragraph{Results.}
Figure~\ref{fig:synth-logreg} is consistent with the main theory-aligned behavior. First, the stationary MSE decreases approximately as \(1/\alpha\) over the large-\(\alpha\) range, with a fitted slope close to \(-1\). Second, the scaled diagnostic \(\alpha\,\widehat{\mathrm{MSE}}_\infty\) increases for small and moderate \(\alpha\), and then approaches a plateau for larger \(\alpha\). This is the nonlinear strongly convex analogue of the quadratic prediction \(\alpha\,\mathrm{MSE}_\infty\to C_{\rm quad}\): although no closed-form constant is available here, the plateau indicates an empirical coefficient \(C_{\rm logreg}\) in the large-\(\alpha\) regime. The near-overlap of the curves for different fixed inner tolerances suggests that this observed coefficient is not an artifact of the inexact inner solve, at least for the tested tolerances.
Third, the tolerance sweep shows that the inner residual tolerance does not need to shrink with \(\alpha\) to preserve the trend, provided it is not too loose. When the tolerance is too loose, the scaled-MSE plot makes the breakdown visible.

Finally, the inner-iteration plot shows that the cost of the LM/Newton solve does not grow with \(\alpha\) in this experiment; it is roughly stable for small and moderate \(\alpha\) and may even decrease in the large-\(\alpha\) regime. This supports the inexact-solve discussion of Section~\ref{sec:inexact}: the residual-to-error scaling of the resolvent allows moderate inner tolerances to remain compatible with the \(O(1/\alpha)\) stationary mean-square scaling.

\subsection{Real-data benchmark: MNIST softmax regression}
\label{sec:exp-mnist}

% We finally evaluate \IRONfi{} on a standard classification benchmark. The purpose of this experiment is practical rather than theorem-level: MNIST softmax regression is used to assess accuracy, stability, wall-clock cost, and the overhead introduced by the inexact inner solves.
We finally evaluate \IRONfi{} on a standard classification benchmark.
This experiment is practical rather than theorem-level. Unlike the controlled synthetic experiments above, the MNIST implementation is a stochastic-oracle learning variant, where mini-batches are used inside the optimization procedure. Therefore, this experiment is not intended
as a direct validation of the center-noise theorem. Its purpose is to assess whether the implicit-resolvent viewpoint leads to stable and competitive behavior in a standard learning task, and to quantify the wall-clock overhead introduced by the inexact inner solves.

\paragraph{Problem.}
We train a multiclass softmax-regression model on MNIST with \(\ell_2\) regularization. Images are flattened to \(784\)-dimensional vectors and normalized to \([0,1]\). All methods use the same preprocessing, batch sizes, regularization strength, random seeds, and epoch budget.

\paragraph{Methods.}
We compare three methods:
\begin{itemize}
\item AdamW~\cite{kingma2015adam,loshchilov2019decoupled}, as a widely used adaptive first-order baseline;
\item NAG-GS~\cite{leplat2023naggssemiimplicitacceleratedrobust}, as a semi-implicit accelerated baseline;
\item \IRONfi{}, computed with an inexact Newton--CG / LM-type inner solve and residual-based stopping.
\end{itemize}
%The comparison is not intended to establish state-of-the-art performance on MNIST. Rather, it tests whether the stability benefits suggested by the resolvent viewpoint can be obtained in a learning benchmark, and at what computational cost.
This benchmark illustrates how the resolvent viewpoint behaves in a standard learning task, both in terms of accuracy and in terms of the computational cost introduced by the inner solve.

\paragraph{Validation-based tuning protocol.}
To avoid test-set model selection, all hyperparameters are selected using a train/validation split. For each batch size
\[
b\in\{128,256,384\},
\]
we run a short validation-based grid search for each method under the same tuning budget. The hyperparameter maximizing validation accuracy at the end of the tuning run is selected. The test set is then used only once, in the final evaluation.

The grids are
\[
\eta_{\rm AdamW}\in
\{3\cdot10^{-4},5\cdot10^{-4},7\cdot10^{-4},10^{-3},1.5\cdot10^{-3},2\cdot10^{-3},3\cdot10^{-3}\},
\]
for AdamW,
\[
\alpha_{\rm NAG-GS}\in
\{0.2,0.35,0.5,0.75,1.0,1.5,2.0\},
\]
for NAG-GS, and
\[
\alpha_{\rm IRON}\in
\{0.75,1.0,1.25,1.5,2.0,2.5,3.0\},
\]
for \IRONfi{}.

\paragraph{Final evaluation.}
Using the selected hyperparameters, each method is run for \(25\) epochs and repeated over five random seeds. We report training loss, test accuracy, wall-clock accuracy curves, and final test accuracy with runtime. For \IRONfi{}, we also report the mean number of inner Newton--CG iterations, which measures the cost of the inexact resolvent evaluation. See Figures~\ref{fig:mnist-train-loss} to~\ref{fig:mnist-inner-iters}, and Table~\ref{tab:mnist_softmax_results}.

\begin{figure}[ht!]
  \centering

  \begin{subfigure}[t]{0.32\linewidth}
    \includegraphics[width=\linewidth]{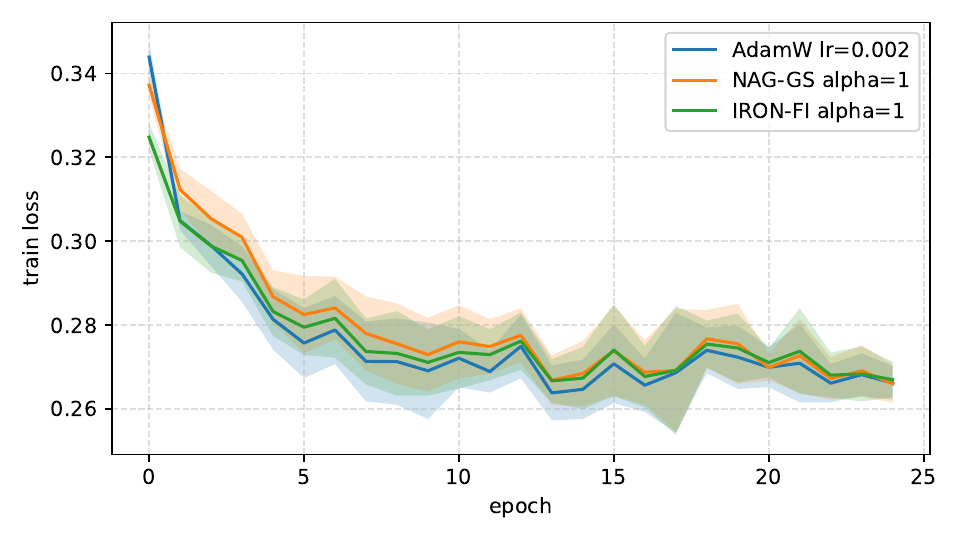}
    \caption{\(b=128\)}
  \end{subfigure}\hfill
  \begin{subfigure}[t]{0.32\linewidth}
    \includegraphics[width=\linewidth]{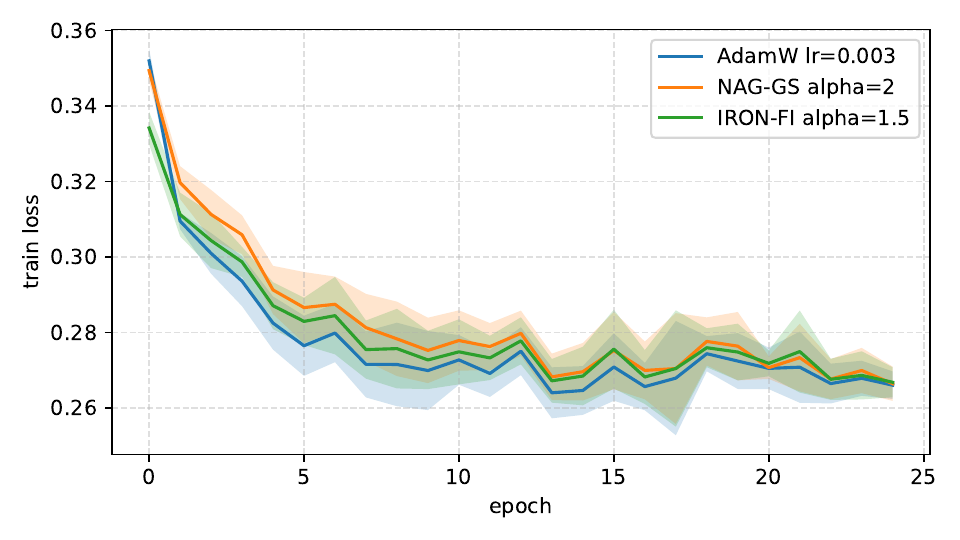}
    \caption{\(b=256\)}
  \end{subfigure}\hfill
  \begin{subfigure}[t]{0.32\linewidth}
    \includegraphics[width=\linewidth]{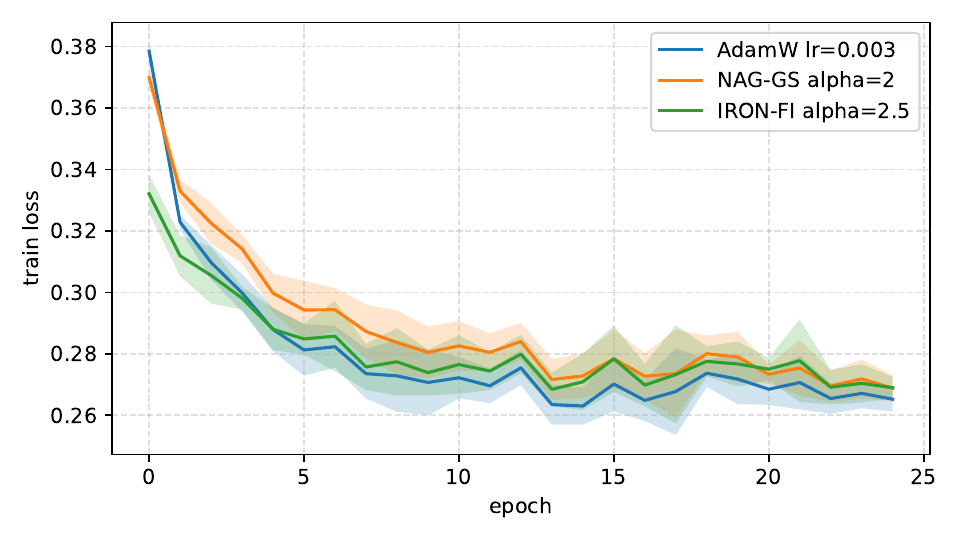}
    \caption{\(b=384\)}
  \end{subfigure}

  \caption{MNIST softmax regression: training loss versus epochs, averaged over five random seeds. \IRONfi{} is competitive in per-epoch loss decrease, especially for larger batch sizes where the validation-selected implicit stepsize is larger.}
  \label{fig:mnist-train-loss}
\end{figure}

\begin{figure}[ht!]
  \centering

  \begin{subfigure}[t]{0.32\linewidth}
    \includegraphics[width=\linewidth]{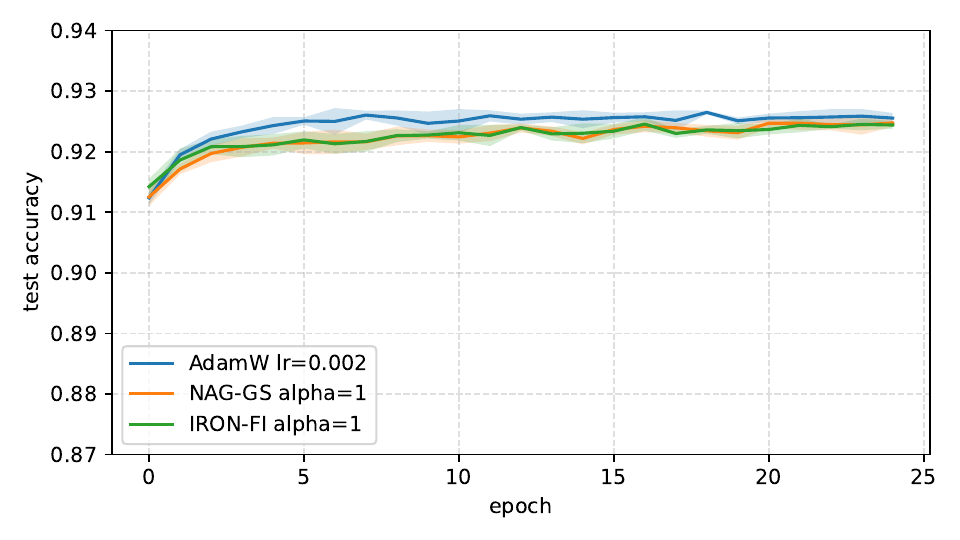}
    \caption{\(b=128\)}
  \end{subfigure}\hfill
  \begin{subfigure}[t]{0.32\linewidth}
    \includegraphics[width=\linewidth]{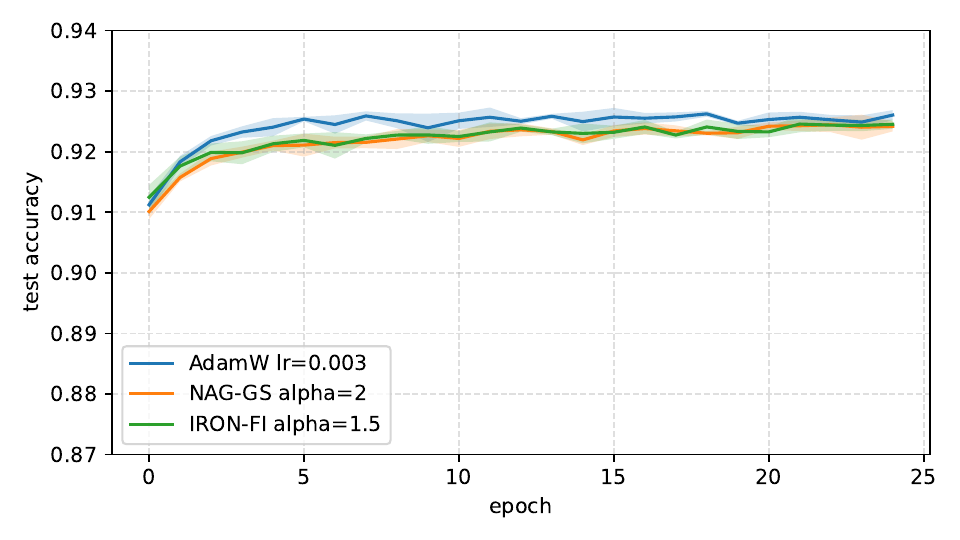}
    \caption{\(b=256\)}
  \end{subfigure}\hfill
  \begin{subfigure}[t]{0.32\linewidth}
    \includegraphics[width=\linewidth]{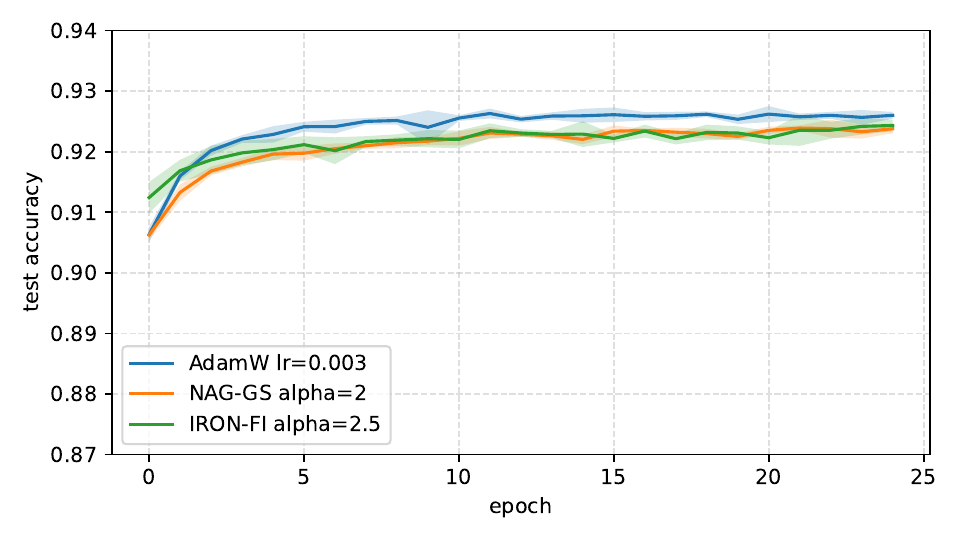}
    \caption{\(b=384\)}
  \end{subfigure}

  \caption{MNIST softmax regression: test accuracy versus epochs, averaged over five random seeds. With validation-selected parameters, \IRONfi{} remains close to AdamW and NAG-GS in final accuracy, while exhibiting stable trajectories across seeds.}
  \label{fig:mnist-test-acc-epoch}
\end{figure}

\begin{figure}[ht!]
  \centering

  \begin{subfigure}[t]{0.32\linewidth}
    \includegraphics[width=\linewidth]{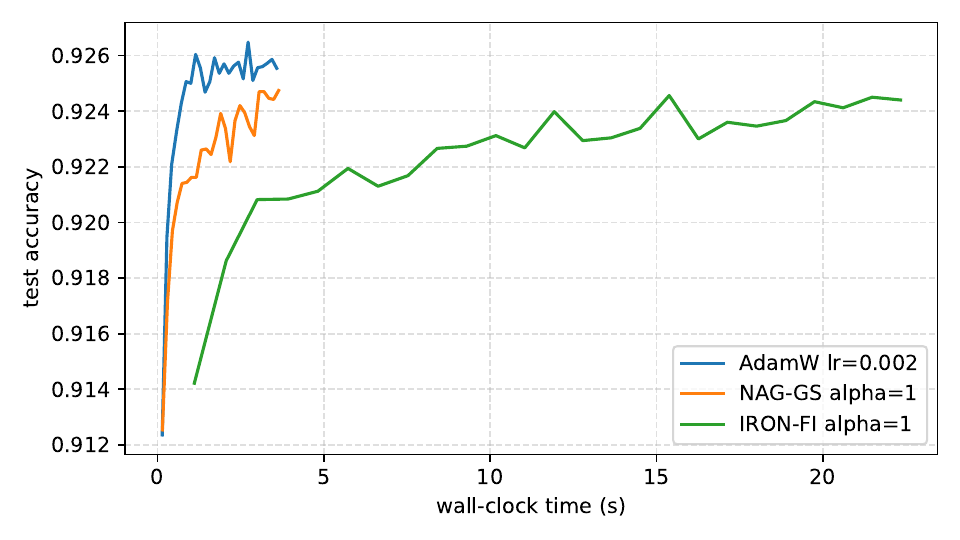}
    \caption{\(b=128\)}
  \end{subfigure}\hfill
  \begin{subfigure}[t]{0.32\linewidth}
    \includegraphics[width=\linewidth]{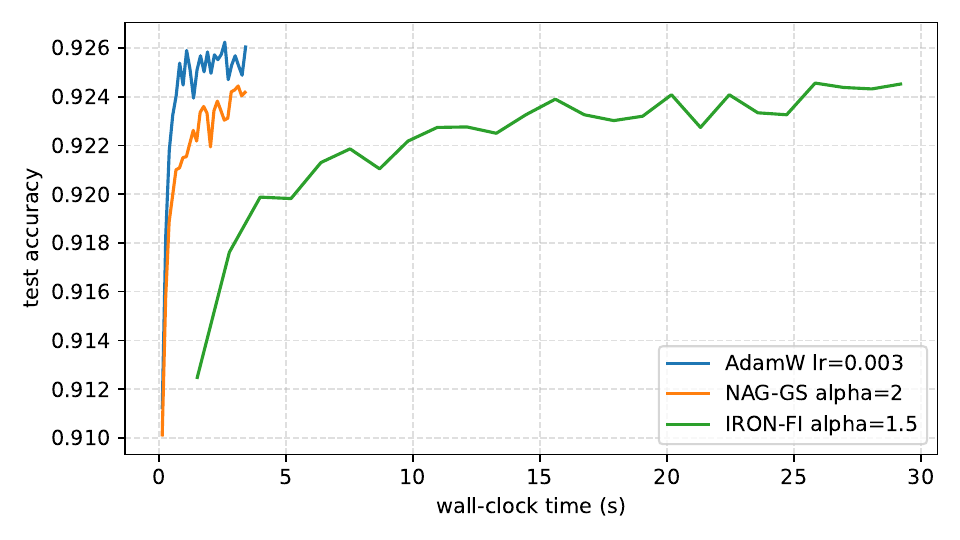}
    \caption{\(b=256\)}
  \end{subfigure}\hfill
  \begin{subfigure}[t]{0.32\linewidth}
    \includegraphics[width=\linewidth]{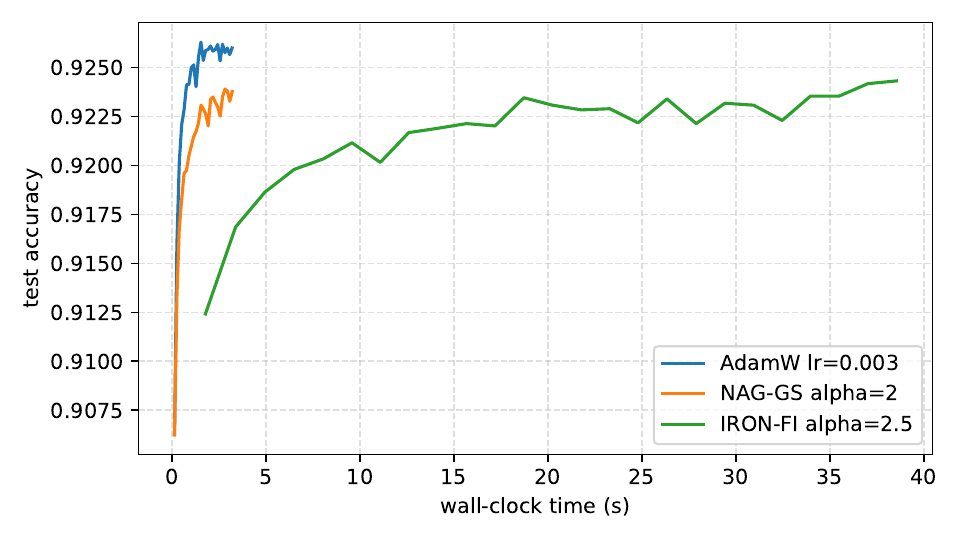}
    \caption{\(b=384\)}
  \end{subfigure}

  \caption{MNIST softmax regression: test accuracy versus wall-clock time. These plots complement the epoch-based curves and make the computational trade-off explicit. Although \IRONfi{} is competitive per epoch, its inexact Newton--CG inner solves make it slower in wall-clock time than the first-order baselines in this implementation.}
  \label{fig:mnist-test-acc-time}
\end{figure}

\begin{figure}[ht!]
  \centering

  \begin{subfigure}[t]{0.32\linewidth}
    \includegraphics[width=\linewidth]{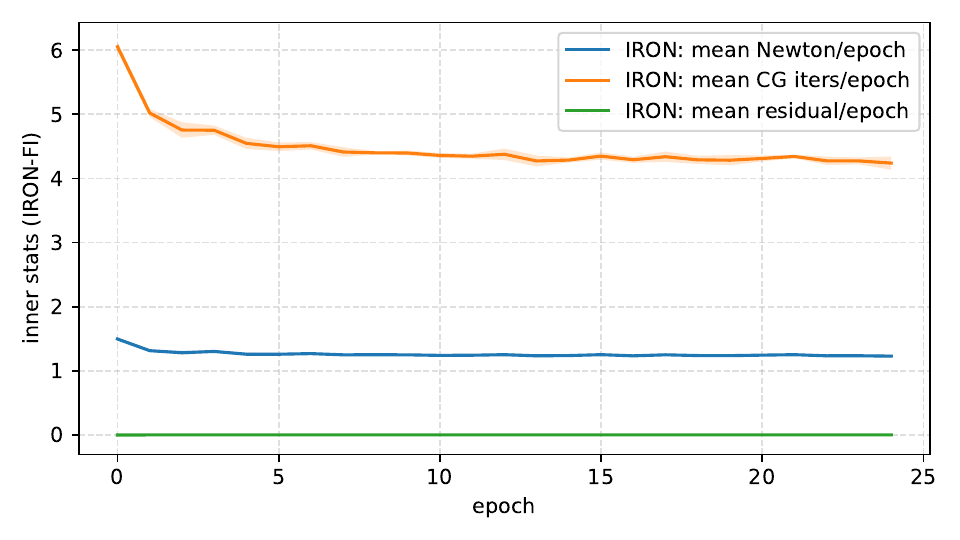}
    \caption{\(b=128\)}
  \end{subfigure}\hfill
  \begin{subfigure}[t]{0.32\linewidth}
    \includegraphics[width=\linewidth]{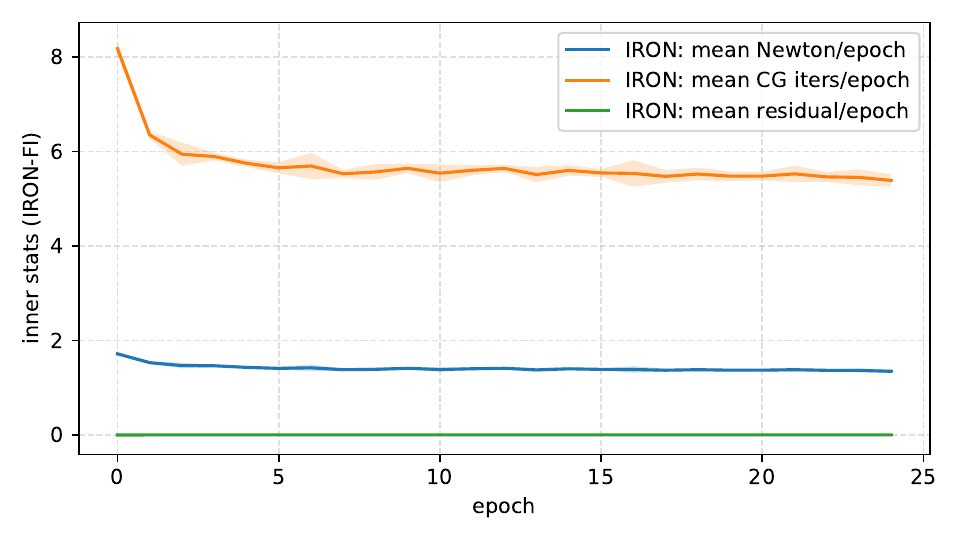}
    \caption{\(b=256\)}
  \end{subfigure}\hfill
  \begin{subfigure}[t]{0.32\linewidth}
    \includegraphics[width=\linewidth]{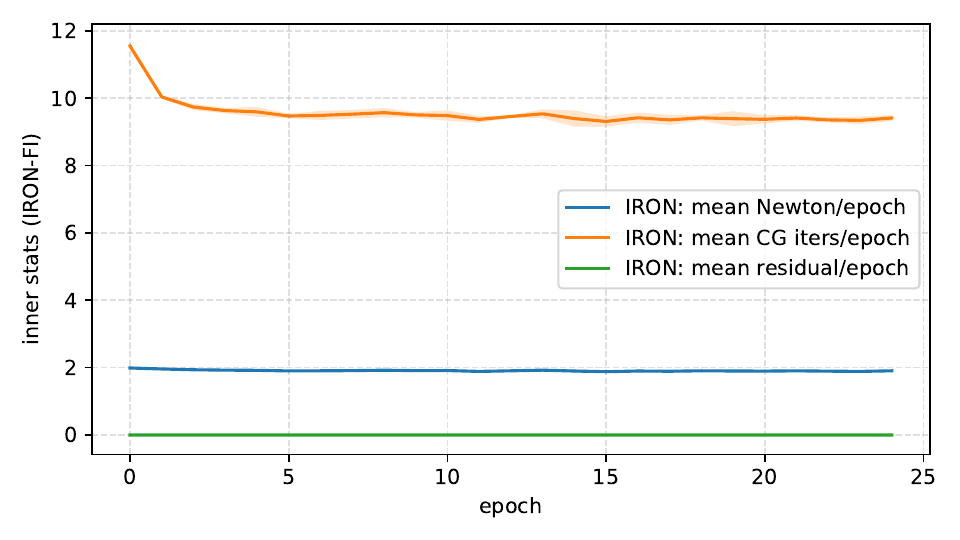}
    \caption{\(b=384\)}
  \end{subfigure}

  \caption{\IRONfi{} on MNIST: mean number of inner Newton--CG iterations per outer step. The inner iteration count quantifies the additional cost of evaluating the inexact resolvent and explains the wall-clock gap observed in Figure~\ref{fig:mnist-test-acc-time}.}
  \label{fig:mnist-inner-iters}
\end{figure}

% -------------------- Table --------------------

% -------------------- Table (cleaner + fused batch size) --------------------
\begin{table}[ht!]
\centering
\caption{MNIST softmax regression after 25 epochs, averaged over five random seeds. Hyperparameters are selected by the validation-based protocol described above and the test set is used only for the final evaluation. Reported values are mean\(\pm\)std over seeds. For each batch size, the best accuracy is shown in \best{bold}, and the second best is \second{underlined}.}
\label{tab:mnist_softmax_results}

\setlength{\tabcolsep}{5.5pt} % tighten columns a bit
\renewcommand{\arraystretch}{1.05}

\begin{tabular}{l l cc}
\toprule
Batch size $b$ & Method & Final test acc. @25 & Time (s) \\
\midrule
\multirow{3}{*}{128}
& AdamW ($\mathrm{lr}=0.002$)                           & \best{0.9255 $\pm$ 0.0009} & $3.58 \pm 0.06$ \\
& NAG-GS ($\mu=\gamma_0=1$, $\alpha=1.0$)               & \second{$0.9247 \pm 0.0006$} & $3.64 \pm 0.08$ \\
& IRON-FI ($\alpha=1.0$)                                & $0.9244 \pm 0.0006$ & $22.34 \pm 0.78$ \\
\midrule
\multirow{3}{*}{256}
& AdamW ($\mathrm{lr}=0.003$)                           & \best{0.9260 $\pm$ 0.0008} & $3.42 \pm 0.04$ \\
& NAG-GS ($\mu=\gamma_0=1$, $\alpha=2.0$)               & $0.9242 \pm 0.0008$ & $3.40 \pm 0.03$ \\
& IRON-FI ($\alpha=1.5$)                                & \second{$0.9245 \pm 0.0007$} & $29.19 \pm 0.24$ \\
\midrule
\multirow{3}{*}{384}
& AdamW ($\mathrm{lr}=0.003$)                           & \best{0.9260 $\pm$ 0.0006} & $3.19 \pm 0.06$ \\
& NAG-GS ($\mu=\gamma_0=1$, $\alpha=2.0$)               & $0.9238 \pm 0.0007$ & $3.20 \pm 0.07$ \\
& IRON-FI ($\alpha=2.5$)                                & \second{$0.9243 \pm 0.0009$} & $38.55 \pm 0.54$ \\

\bottomrule
\end{tabular}
\end{table}

\paragraph{Results.}
The validation-based protocol selects larger \IRONfi{} stepsizes as the batch size increases, consistent with the intuition that reduced mini-batch variability permits more aggressive implicit updates. In terms of epochs, \IRONfi{} is competitive with NAG-GS and remains within a small margin of AdamW in final test accuracy, while often decreasing the training loss rapidly during the first epochs. However, the wall-clock plots show the main practical limitation: the inexact Newton--CG inner solve dominates the runtime in the present implementation. Thus, these results should be read as evidence of stable behavior and competitive per-epoch progress, not as evidence of wall-clock superiority.

The inner-iteration plots further clarify this trade-off. They show the computational price paid for the resolvent evaluation and motivate faster inner solvers, structure-exploiting curvature approximations, randomized linear algebra, or cheaper inexact variants as important directions for future work.

\color{black}

\paragraph{Reproducibility and runtime.}
The full Python code, including scripts to regenerate all figures reported here, is available at
\url{https://github.com/vleplat/IRON_FI.git}.
The paper-facing MNIST runner performs validation-based tuning and final multi-seed evaluation. All experiments reported here were run on a MacBook Pro (M4 Pro, 24 GB memory); runtimes are wall-clock CPU measurements intended for relative comparison on this implementation. Absolute timings may differ substantially on GPU hardware or with optimized inner solvers.

\section{Conclusion and future work}\label{sec:conclusion}

We introduced \IRON, a discretization-driven framework for studying the long-time behavior of stochastic accelerated dynamics. Its fully implicit instantiation, \IRONfi{}, applies a Backward--Euler discretization to the accelerated stochastic flow and leads, after algebraic elimination, to a resolvent update. In this form, the stochastic perturbation injected in the velocity equation becomes an additive perturbation of the resolvent center, revealing a direct link between implicit discretization and stationary variance control.

For smooth strongly convex objectives, we proved that this mechanism yields a contractive Lyapunov recursion with a stationary mean-square error bound of order \(O(\sigma^2/\alpha)\). In the quadratic case, we further obtained an explicit asymptotic stationary constant and an exact covariance formula through a discrete Lyapunov equation. These results show that larger implicit steps can simultaneously strengthen contraction and reduce the stationary spread induced by noise.

%The experiments support this picture across controlled quadratic tests, synthetic strongly convex logistic regression, a qualitative nonconvex log-cosh example, and MNIST softmax regression. 
The controlled experiments support this picture in the quadratic test, the synthetic strongly convex logistic-regression benchmark, and the qualitative nonconvex log-cosh example. The MNIST softmax-regression benchmark complements these tests by illustrating the practical behavior
and computational cost of an inexact stochastic-oracle implementation of the resolvent viewpoint.
In the quadratic setting, the empirical stationary covariance matches the exact Lyapunov prediction. In the nonlinear strongly convex benchmark, the stationary mean-square error follows the predicted \(O(1/\alpha)\) scaling, with a fitted log--log slope close to \(-1\); the scaled quantity \(\alpha\,\widehat{\mathrm{MSE}}_\infty\) provides a complementary plateau diagnostic. On MNIST, \IRONfi{} exhibits stable accuracy trajectories and competitive per-epoch behavior, while highlighting the computational role of the inner Newton--CG/LM solve.

Several directions remain open. On the algorithmic side, the main priority is to reduce the cost of the inner resolvent solve, for example through warm starts, matrix-free methods, quasi-Newton or approximate curvature models, and randomized linear-algebra techniques. On the modeling side, it will be important to move beyond the Gaussian diffusion/center-noise proxy and better capture mini-batch sampling, non-isotropic covariance, and heavy-tailed perturbations. On the theory side, natural extensions include general convex objectives and local PL/K{\L} regimes around stable nonconvex stationary points. Finally, larger-scale implementations, including GPU-based solvers and deeper models, will help determine when the stationary-stability benefits of implicit discretization translate into end-to-end performance gains.

% ===================== Appendix with full proofs =====================
\appendix
\section{Appendix - Proofs}

\subsection{Proof of Lemma~\ref{lem:resolvent}}\label{app:proof-1}

\begin{proof}
Let $p=\prox_{\lambda f}(u)$ and $q=\prox_{\lambda f}(v)$. The optimality conditions give
$u=p+\lambda\nabla f(p)$ and $v=q+\lambda\nabla f(q)$, hence
$u-v=(p-q)+\lambda(\nabla f(p)-\nabla f(q))$.
Taking inner products with $p-q$ and using $\mu$-strong monotonicity of $\nabla f$ yields
$\langle u-v,p-q\rangle \ge (1+\lambda\mu)\|p-q\|^2$.
By Cauchy--Schwarz, $\langle u-v,p-q\rangle \le \|u-v\|\,\|p-q\|$, hence
$\|p-q\|\le (1+\lambda\mu)^{-1}\|u-v\|$.
\end{proof}

\subsection{Proof of Lemma~\ref{lem:vx}}\label{app:proof-2}
\begin{proof}
From \eqref{eq:v-gamma}, $v_{k}=x_{k}+\frac{x_k-x_{k-1}}{\alpha}$, so
\[
v_k-x^\star=(x_k-x^\star)+\frac{1}{\alpha}(x_k-x_{k-1}).
\]
By $(a+b)^2\le 2\|a\|^2+2\|b\|^2$ and $\alpha\ge 1$,

\begin{equation*}
    \begin{aligned}
        \|v_k-x^\star\|^2 & \le 2\|x_k-x^\star\|^2+\frac{2}{\alpha^2}\|x_k-x_{k-1}\|^2
\\
&\le 2\|x_k-x^\star\|^2+4\|x_k-x^\star\|^2+4\|x_{k-1}-x^\star\|^2,
    \end{aligned}
\end{equation*}
using $\|x_k-x_{k-1}\|^2\le 2\|x_k-x^\star\|^2+2\|x_{k-1}-x^\star\|^2$. This gives the stated $6$ and $4$ constants. The consequence for $\E\mathcal{E}_k$ follows from Lemma~\ref{lem:energy-eq}.
\end{proof}

\subsection{Proof of Lemma~\ref{lem:scale}}\label{app:proof-3}
\begin{proof}
Let $\tau=\frac{1}{\alpha}+\frac{\mu}{\gamma}$ and $\lambda=\frac{\alpha}{\gamma(1+\tau)}$ with $\alpha\ge 1$, $\gamma>0$, $\mu>0$.
First,

\begin{equation*}
    \begin{aligned}
        1+\lambda\mu \;=\; 1+\frac{\alpha\mu}{\gamma(1+\tau)}
\;& =\; 1+\frac{\alpha\mu}{\gamma\!\left(1+\frac{1}{\alpha}+\frac{\mu}{\gamma}\right)} \\
& \;\ge\; 1+\frac{\alpha\mu}{\gamma\!\left(2+\frac{\mu}{\gamma}\right)}\\
& \;\ge\; \frac{\alpha\mu}{\gamma\!\left(2+\frac{\mu}{\gamma}\right)}.
    \end{aligned}
\end{equation*}
Hence
\[
\frac{1}{(1+\lambda\mu)^2}
\;\le\; \Big(\frac{\gamma\!\left(2+\frac{\mu}{\gamma}\right)}{\mu}\Big)^2 \frac{1}{\alpha^2}
\;=:\; \frac{K_1}{\alpha^2}.
\]
Next, since $1+\tau=1+\frac{1}{\alpha}+\frac{\mu}{\gamma}\ge 1+\frac{\mu}{\gamma}$,
\[
\frac{1}{(1+\lambda\mu)^2}\cdot \frac{\alpha}{(1+\tau)^2}
\;\le\; \frac{K_1}{\alpha^2}\cdot \frac{\alpha}{(1+\mu/\gamma)^2}
\;=\; \frac{K_1}{(1+\mu/\gamma)^2}\cdot \frac{1}{\alpha}.
\]
This proves both claims. In particular, the product term scales as $\mathcal{O}(1/\alpha)$.
\end{proof}

\subsection{Proof of Proposition~\ref{prop:onestep}}\label{app:proof-5}
\begin{proof}
By Lemma~\ref{lem:resolvent} and \eqref{eq:proxsub},
\[
\norm{x_{k+1}-x^\star} \le \frac{1}{1+\lambda\mu}\,\norm{(c_k-x^\star)+\xi_k}.
\]
Square and take conditional expectation; the cross term vanishes since $\E[\xi_k\mid x_k,v_k]=0$:
\[
\E\!\left[\norm{x_{k+1}-x^\star}^2\,\middle|\,x_k,v_k\right]
\le \frac{1}{(1+\lambda\mu)^2}\Big(\norm{c_k-x^\star}^2+\E\norm{\xi_k}^2\Big).
\]
Use Lemma~\ref{lem:center} plus $\E\norm{\xi_k}^2\le \frac{\alpha}{(1+\tau)^2}\sigma^2$, and then Lemma~\ref{lem:scale} to get

\begin{equation*}
    \begin{aligned}
        \E\!\left[\norm{x_{k+1}-x^\star}^2\,\middle|\,x_k,v_k\right]
& \le \frac{K'}{\alpha^2}\big(\norm{x_k-x^\star}^2+\norm{v_k-x^\star}^2\big) \\
& \quad +\frac{\tilde K}{\alpha}\sigma^2.
    \end{aligned}
\end{equation*}
Finally apply Lemma~\ref{lem:energy-dom} and take expectations.
\end{proof}

\subsection{Proof of Theorem~\ref{thm:main}}\label{app:proof-6}

\begin{proof}
From Proposition~\ref{prop:onestep},
\[
\E\|x_{k+1}-x^\star\|^2
\;\le\; \frac{C_1}{\alpha^2}\,\E \mathcal{E}_k \;+\; \frac{C_2}{\alpha}\sigma^2.
\]
By Lemma~\ref{lem:vx} and Lemma~\ref{lem:energy-eq}, there exists $\bar m_2>0$ such that
\[
\E\mathcal{E}_k
\;\le\; \bar m_2\Big(\E\|x_k-x^\star\|^2 + \E\|x_{k-1}-x^\star\|^2\Big).
\]
Hence the two-step recursion holds:
\begin{equation}\label{eq:two-step-rec}
\E\|x_{k+1}-x^\star\|^2
\;\le\; \frac{A}{\alpha^2}\,\E\|x_k-x^\star\|^2
\;+\; \frac{B}{\alpha^2}\,\E\|x_{k-1}-x^\star\|^2
\;+\; \frac{C_2}{\alpha}\sigma^2,
\end{equation}
where $A,B>0$ depend only on $(\mu,L,\gamma)$.

Define $V_k := \E\|x_k-x^\star\|^2 + \theta\,\E\|x_{k-1}-x^\star\|^2$ with
$\theta := \sqrt{B}/\alpha$. Then by \eqref{eq:two-step-rec},
\[
V_{k+1}
\le \Big(\theta+\frac{A}{\alpha^2}\Big)\E\|x_k-x^\star\|^2
+ \frac{B}{\alpha^2}\E\|x_{k-1}-x^\star\|^2
+ \frac{C_2}{\alpha}\sigma^2.
\]
Set $\lambda := \frac{2\sqrt{B}}{\alpha} + \frac{A}{\alpha^2}$. We have
$\theta+\frac{A}{\alpha^2} \le \lambda$ and, since $\theta=\sqrt{B}/\alpha$ and $\alpha\ge 1$,
\[
\lambda\theta = \frac{2B}{\alpha^2}+\frac{A\sqrt{B}}{\alpha^3}\ge \frac{B}{\alpha^2}.
\]
Therefore
\[
V_{k+1}\le \lambda\Big(\E\|x_k-x^\star\|^2+\theta\E\|x_{k-1}-x^\star\|^2\Big)+\frac{C_2}{\alpha}\sigma^2
= \lambda V_k + \frac{C_2}{\alpha}\sigma^2.
\]
Finally, for $\alpha\ge 1$,
\[
\lambda \le \frac{2\sqrt{B}+A}{\alpha} =: \frac{G}{\alpha},
\]
which gives \eqref{eq:Vk-recursion} with $C:=C_2$. Iterating yields the closed form bound, and using $\E\|x_k-x^\star\|^2\le V_k$ concludes.
\end{proof}

\color{black}
\subsection{Proof of Proposition~\ref{prop:quad-explicit}}\label{app:quad-proof}

\begin{proof}
Fix an eigendirection \(i\), and write \(z_{k,i}:=(e_{k,i},w_{k,i})^\top\). From \eqref{eq:quad-2x2}, the recursion can be written as
\[
z_{k+1,i}=M_i z_{k,i}+g_i\xi_{k,i},
\]
where
\[
M_i=
\begin{pmatrix}
\dfrac{r_i\tau}{1+\tau} & \dfrac{r_i}{1+\tau}\\[2mm]
\Big(1+\dfrac1\alpha\Big)\dfrac{r_i\tau}{1+\tau}-\dfrac1\alpha
&
\Big(1+\dfrac1\alpha\Big)\dfrac{r_i}{1+\tau}
\end{pmatrix},
\qquad
g_i
=
r_i
\begin{pmatrix}
1\\[1mm]
1+\dfrac1\alpha
\end{pmatrix}.
\]
The \((2,1)\)-entry contains the additional term \(-1/\alpha\), but it is still \(O(1/\alpha)\). Indeed, since \(a_i\in[\mu,L]\) and
\[
r_i=\frac{1}{1+\lambda a_i}
=
\frac{1}{1+\frac{\alpha a_i}{\gamma(1+\tau)}}
=O(\alpha^{-1}),
\]
with constants depending only on \((\mu,L,\gamma)\), all entries of \(M_i\) are \(O(\alpha^{-1})\). Hence there exists \(K_i>0\), independent of \(\alpha\), such that
\[
\|M_i\|_2\le \frac{K_i}{\alpha},
\qquad
\|g_i\|_2\le \frac{K_i}{\alpha}.
\]
Using \(\E[\xi_{k,i}]=0\) and \(\E[\xi_{k,i}^2]=\alpha\rho^2/(1+\tau)^2=O(\alpha\rho^2)\), we obtain
\[
\E\!\left[\|z_{k+1,i}\|^2\,\middle|\,z_{k,i}\right]
=
\|M_i z_{k,i}\|^2+\E[\xi_{k,i}^2]\|g_i\|^2
\le
\frac{C_i}{\alpha^2}\|z_{k,i}\|^2+\frac{C_i\rho^2}{\alpha}.
\]
This proves Item~1.

We now prove the asymptotic stationary constant. Since \(\|M_i\|_2\le K_i/\alpha\), the recursion is stable for all sufficiently large \(\alpha\). Let \(P_i\) denote the stationary covariance of \(z_{k,i}\). Then
\[
P_i
=
\sum_{t=0}^{\infty} M_i^t Q_i(M_i^\top)^t,
\qquad
Q_i:=\E[\xi_{k,i}^2]\,g_i g_i^\top.
\]
Let \(p:=(1,0)^\top\). The stationary variance of the position coordinate is
\[
\E[e_{\infty,i}^2]
=
p^\top P_i p
=
\E[\xi_{k,i}^2]\sum_{t=0}^{\infty}\big(p^\top M_i^t g_i\big)^2.
\]
The leading term is the \(t=0\) term:
\[
\E[\xi_{k,i}^2]\big(p^\top g_i\big)^2
=
\E[\xi_{k,i}^2]\,r_i^2.
\]
The remaining terms are negligible at the scale \(1/\alpha\). Indeed, for \(t\ge 1\),
\[
|p^\top M_i^t g_i|
\le
\|M_i\|_2^t\|g_i\|_2
\le
\left(\frac{K_i}{\alpha}\right)^t\frac{K_i}{\alpha},
\]
and therefore
\[
\E[\xi_{k,i}^2]\sum_{t=1}^{\infty}\big(p^\top M_i^t g_i\big)^2
=
O(\alpha)\sum_{t=1}^{\infty}O(\alpha^{-2t-2})
=
O(\alpha^{-3}).
\]
Thus
\[
\E[e_{\infty,i}^2]
=
\E[\xi_{k,i}^2]\,r_i^2+O(\alpha^{-3}).
\]
Finally,
\[
1+\tau = 1+\frac1\alpha+\frac{\mu}{\gamma}
=
1+\frac{\mu}{\gamma}+o(1),
\]
and
\[
r_i
=
\frac{1}{1+\frac{\alpha a_i}{\gamma(1+\tau)}}
=
\frac{\gamma(1+\mu/\gamma)}{\alpha a_i}+o(\alpha^{-1}).
\]
Since
\[
\E[\xi_{k,i}^2]
=
\frac{\alpha\rho^2}{(1+\tau)^2}
=
\frac{\alpha\rho^2}{(1+\mu/\gamma)^2}+o(\alpha),
\]
we obtain
\[
\E[e_{\infty,i}^2]
=
\frac{\gamma^2\rho^2}{\alpha a_i^2}
+o(\alpha^{-1}).
\]
Multiplying by \(\alpha\) and summing over \(i\) gives
\[
\lim_{\alpha\to\infty}\alpha\,\E\|x_\infty-x^\star\|^2
=
\gamma^2\rho^2\sum_{i=1}^n\frac1{a_i^2}.
\]
Since \(\sigma^2=\tr(\Sigma)=n\rho^2\) in the isotropic case, this is equivalently
\[
\gamma^2\rho^2\sum_{i=1}^n\frac1{a_i^2}
=
\frac{\gamma^2}{n}\sigma^2\tr(A^{-2}).
\]
This proves Item~2.
\end{proof}

\color{black}

\color{black}
\subsection{Proof of Theorem~\ref{thm:vary}}\label{app:proof-varying-steps}

\begin{proof}
Define
\[
\gamma_-:=\min\{\gamma_0,\mu\},
\qquad
\gamma_+:=\max\{\gamma_0,\mu\}.
\]
The damping update \eqref{eq:v-gamma} is a convex combination of \(\gamma_k\) and \(\mu\):
\[
\gamma_{k+1}
=
\frac{1}{1+\alpha_k}\gamma_k+\frac{\alpha_k}{1+\alpha_k}\mu.
\]
Hence \(\gamma_k\in[\gamma_-,\gamma_+]\) for all \(k\).

We use the time-dependent energy
\[
\mathcal E_k^{\mathrm{var}}
:=
f(x_k)-f(x^\star)+\frac{\gamma_k}{2}\|v_k-x^\star\|^2.
\]
Uniformly in \(k\), strong convexity and smoothness give
\[
\frac{\mu}{2}\|x_k-x^\star\|^2
\le
\mathcal E_k^{\mathrm{var}}
\le
\frac{L}{2}\|x_k-x^\star\|^2+\frac{\gamma_+}{2}\|v_k-x^\star\|^2,
\]
and, conversely,
\[
\|x_k-x^\star\|^2+\|v_k-x^\star\|^2
\le
\left(\frac{2}{\mu}+\frac{2}{\gamma_-}\right)
\mathcal E_k^{\mathrm{var}}.
\]
The center-coupling estimate of Lemma~\ref{lem:center} and the resolvent-contraction argument in Proposition~\ref{prop:onestep} therefore hold with constants uniform over all
\[
\alpha_k\in[\underline{\alpha},\overline{\alpha}],
\qquad
\gamma_k\in[\gamma_-,\gamma_+].
\]
Moreover, the scaling estimate is uniform: there exist constants \(K,K'>0\), depending only on \((\mu,\gamma_-,\gamma_+,\underline{\alpha},\overline{\alpha})\), such that
\[
\frac{1}{(1+\lambda_k\mu)^2}\le \frac{K}{\alpha_k^2},
\qquad
\frac{1}{(1+\lambda_k\mu)^2}\frac{\alpha_k}{(1+\tau_k)^2}
\le
\frac{K'}{\alpha_k}.
\]
Thus, for some constants \(C_1,C_2>0\) independent of \(k\),
\[
\E\|x_{k+1}-x^\star\|^2
\le
\frac{C_1}{\alpha_k^2}\,\E\mathcal E_k^{\mathrm{var}}
+
\frac{C_2\sigma^2}{\alpha_k}.
\]

It remains to express the energy in terms of the position variables. From \eqref{eq:v-gamma},
\[
v_k=x_k+\frac{x_k-x_{k-1}}{\alpha_{k-1}},
\]
and since \(\alpha_{k-1}\ge \underline{\alpha}\ge 1\), the same argument as in Lemma~\ref{lem:vx} yields
\[
\|v_k-x^\star\|^2
\le
6\|x_k-x^\star\|^2+4\|x_{k-1}-x^\star\|^2.
\]
Consequently, there exist constants \(A,B,C>0\), independent of \(k\), such that
\[
u_{k+1}
\le
\frac{A}{\alpha_k^2}u_k
+
\frac{B}{\alpha_k^2}u_{k-1}
+
\frac{C\sigma^2}{\alpha_k},
\qquad
u_k:=\E\|x_k-x^\star\|^2.
\]
Since \(\alpha_k\ge \underline{\alpha}\), we also have
\[
u_{k+1}
\le
\frac{A}{\underline{\alpha}^{\,2}}u_k
+
\frac{B}{\underline{\alpha}^{\,2}}u_{k-1}
+
\frac{C\sigma^2}{\alpha_k}.
\]
Choose
\[
\theta:=\frac{\sqrt B}{\underline{\alpha}},
\qquad
V_k:=u_k+\theta u_{k-1}.
\]
As in the proof of Theorem~\ref{thm:main},
\[
V_{k+1}
\le
\left(\frac{2\sqrt B}{\underline{\alpha}}+\frac{A}{\underline{\alpha}^{\,2}}\right)V_k
+
\frac{C\sigma^2}{\alpha_k}.
\]
Since \(\underline{\alpha}\ge 1\), the prefactor is bounded by \(G/\underline{\alpha}\), with
\[
G:=2\sqrt B+A.
\]
This proves \eqref{eq:Vk-recursion-varying}. The stated nonasymptotic bound follows by unrolling the scalar recursion. If \(\underline{\alpha}>G\), then \(\rho=G/\underline{\alpha}<1\), and the limiting bound follows from the geometric series and the inequality \(\alpha_t\ge \underline{\alpha}\).
\end{proof}
\color{black}

% ---------- Bibliography ----------
% Fully-pathed BST: put spbasic.bst in springer/ (recommended)
\bibliographystyle{spmpsci}
\bibliography{aaai2026}

\begin{thebibliography}{10}
\providecommand{\url}[1]{{#1}}
\providecommand{\urlprefix}{URL }
\expandafter\ifx\csname urlstyle\endcsname\relax
  \providecommand{\doi}[1]{DOI~\discretionary{}{}{}#1}\else
  \providecommand{\doi}{DOI~\discretionary{}{}{}\begingroup \urlstyle{rm}\Url}\fi

\bibitem{asi2020aprox}
Asi, H., Chadha, K., Cheng, G., Duchi, J.C.: Minibatch stochastic approximate proximal point methods.
\newblock In: Advances in Neural Information Processing Systems, vol.~33 (2020).
\newblock \urlprefix\url{https://proceedings.neurips.cc/paper/2020/hash/fa2246fa0fdf0d3e270c86767b77ba1b-Abstract.html}

\bibitem{bauschke2017convex}
Bauschke, H.H., Combettes, P.L.: Convex Analysis and Monotone Operator Theory in Hilbert Spaces, 2 edn.
\newblock Springer (2017).
\newblock \doi{10.1007/978-3-319-48311-5}

\bibitem{combettes2015sfb}
Combettes, P.L., Pesquet, J.: Stochastic approximations and perturbations of forward--backward splitting methods.
\newblock arXiv preprint arXiv:1507.07095  (2015)

\bibitem{conn2000trust}
Conn, A.R., Gould, N.I.M., Toint, P.L.: Trust-Region Methods.
\newblock SIAM, Philadelphia (2000).
\newblock \doi{10.1137/1.9780898719857}

\bibitem{dahlquist1963special}
Dahlquist, G.G.: A special stability problem for linear multistep methods.
\newblock BIT Numerical Mathematics \textbf{3}(1), 27--43 (1963).
\newblock \doi{10.1007/BF01963532}

\bibitem{davis2019modelbased}
Davis, D., Drusvyatskiy, D.: Stochastic model-based minimization of weakly convex functions.
\newblock SIAM Journal on Optimization \textbf{29}(1), 207--239 (2019)

\bibitem{duchi2009forwardbackward}
Duchi, J., Singer, Y.: Efficient online and batch learning using forward backward splitting.
\newblock Journal of Machine Learning Research \textbf{10}(99), 2899--2934 (2009).
\newblock \urlprefix\url{https://jmlr.org/papers/v10/duchi09a.html}

\bibitem{hairer1996sode2}
Hairer, E., Wanner, G.: Solving Ordinary Differential Equations II: Stiff and Differential-Algebraic Problems, 2 edn.
\newblock Springer, Berlin (1996).
\newblock \doi{10.1007/978-3-642-05221-7}

\bibitem{kim2022sppam}
Kim, J.L., Toulis, P., Kyrillidis, A.: Convergence and stability of the stochastic proximal point algorithm with momentum.
\newblock In: Proceedings of The 4th Annual Learning for Dynamics and Control Conference, \emph{Proceedings of Machine Learning Research}, vol. 168, pp. 1034--1047. PMLR (2022).
\newblock \urlprefix\url{https://proceedings.mlr.press/v168/kim22a.html}

\bibitem{kingma2015adam}
Kingma, D.P., Ba, J.: Adam: A method for stochastic optimization.
\newblock In: International Conference on Learning Representations (ICLR) (2015).
\newblock \urlprefix\url{https://arxiv.org/abs/1412.6980}

\bibitem{leplat2023naggssemiimplicitacceleratedrobust}
Leplat, V., Merkulov, D., Katrutsa, A., Bershatsky, D., Tsymboi, O., Oseledets, I.: Nag-gs: Semi-implicit, accelerated and robust stochastic optimizer (2023).
\newblock \urlprefix\url{https://arxiv.org/abs/2209.14937}

\bibitem{levenberg1944method}
Levenberg, K.: A method for the solution of certain non-linear problems in least squares.
\newblock Quarterly of Applied Mathematics \textbf{2}(2), 164--168 (1944)

\bibitem{loshchilov2019decoupled}
Loshchilov, I., Hutter, F.: Decoupled weight decay regularization.
\newblock In: International Conference on Learning Representations (ICLR) (2019).
\newblock \urlprefix\url{https://arxiv.org/abs/1711.05101}

\bibitem{luo2022from}
Luo, H., Chen, L.: From differential equation solvers to accelerated first-order methods for convex optimization.
\newblock Mathematical Programming \textbf{195}(1), 735--781 (2022).
\newblock \doi{10.1007/s10107-021-01713-3}

\bibitem{marquardt1963algorithm}
Marquardt, D.W.: An algorithm for least-squares estimation of nonlinear parameters.
\newblock Journal of the Society for Industrial and Applied Mathematics \textbf{11}(2), 431--441 (1963).
\newblock \doi{10.1137/0111030}

\bibitem{milzarek2024spp}
Milzarek, A., Schaipp, F., Ulbrich, M.: A semismooth newton stochastic proximal point algorithm with variance reduction.
\newblock SIAM Journal on Optimization \textbf{34}(1), 1157--1185 (2024).
\newblock \doi{10.1137/22M1488181}

\bibitem{patrascu2018spp}
P{\u a}tra{\c s}cu, A., Necoara, I.: Nonasymptotic convergence of stochastic proximal point methods for constrained convex optimization.
\newblock Journal of Machine Learning Research \textbf{18}(198), 1--42 (2018)

\bibitem{richtarik2024unified}
Richt{\'a}rik, P., Sadiev, A., Demidovich, Y.: A unified theory of stochastic proximal point methods without smoothness (2024).
\newblock \urlprefix\url{https://arxiv.org/abs/2405.15941}

\bibitem{toulis2017implicit}
Toulis, P., Airoldi, E.M.: Asymptotic and finite-sample properties of estimators based on stochastic gradients.
\newblock The Annals of Statistics \textbf{45}(4), 1694--1727 (2017).
\newblock \doi{10.1214/16-AOS1506}

\bibitem{toulis2021proxrm}
Toulis, P., Horel, T., Airoldi, E.M.: The proximal robbins--monro method.
\newblock Journal of the Royal Statistical Society: Series B \textbf{83}(1), 188--212 (2021).
\newblock \doi{10.1111/rssb.12405}

\bibitem{traore2024variance}
Traor{\'e}, C., Apidopoulos, V., Salzo, S., Villa, S.: Variance reduction techniques for stochastic proximal point algorithms.
\newblock Journal of Optimization Theory and Applications \textbf{203}, 1910--1939 (2024).
\newblock \doi{10.1007/s10957-024-02502-6}

\bibitem{wilson2021lyapunov}
Wilson, A.C., Recht, B., Jordan, M.I.: A lyapunov analysis of accelerated methods in optimization.
\newblock Journal of Machine Learning Research \textbf{22}(202), 1--34 (2021).
\newblock \urlprefix\url{https://jmlr.org/papers/v22/20-195.html}

\bibitem{xiao2014proxsvrg}
Xiao, L., Zhang, T.: A proximal stochastic gradient method with progressive variance reduction.
\newblock SIAM Journal on Optimization \textbf{24}(4), 2057--2075 (2014).
\newblock \doi{10.1137/140961791}

\end{thebibliography}

\end{document}